\documentclass[11pt]{amsart}
\usepackage{amsfonts,newlfont,latexsym}
\usepackage{color}

\usepackage{amssymb,amsmath,amscd,amsthm,pb-diagram}
\usepackage{hyperref}

\textwidth15cm
\hoffset=-1.1cm




     \theoremstyle{plain}
     \newtheorem{Theorem}{Theorem}[section]
     \newtheorem{theorem}{Theorem}[Theorem]
     \newtheorem{lemma}[Theorem]{Lemma}

     \newtheorem{Lemma}[Theorem]{Lemma}
     \newtheorem{corollary}[Theorem]{Corollary}
     \newtheorem{proposition}[Theorem]{Proposition}
     
     \newtheorem{remark}[Theorem]{Remark}

\theoremstyle{definition}
\newtheorem{definition}[Theorem]{Definition}
\newtheorem{question}[Theorem]{Question}

\newcommand{\R}{\mathbb R}
\newcommand{\Rk}{\mathbb R^k}

\newcommand{\Z}{\mathbb Z}
\newcommand{\Zk}{\mathbb Z^k}
\newcommand{\Q}{\mathbb Q}
\newcommand{\T}{\mathbb T}

\newcommand{\Ci}{C^{\infty}}

\setlength{\marginparwidth}{.7in}

\def \ta{{\alpha}}
\def \tr{{\rho}}
\def \tp{{\phi}}
\def \tm{{\mu}}
\def \tl{{\lambda}}

\def \a{\alpha}

\def \G{\Gamma}

\def \e{\varepsilon}

\def \A{(\EuScript{A})}
\def \M{{M}}


\def \dim{\mbox{dim}\,}
\def \d{\mbox{dist}\,}

\def \ker{\mbox{ker}}

\def \d{\mbox{dist}}

\def \A{\cal A}
\def \Rk {{\mathbb R}^k}
\def \rk {{\mathbb R}^k}

\def \T {{\mathbb T}}

\def \R{{\mathbb R}}
\def \Z{{\mathbb Z}}
\def \Zk{{\mathbb Z} ^k}
\def \Q{{\mathbb Q}}

\def \be{{\bar E}}

\def \w{{\cal W}}
\def \ws{{\cal W} ^s}

\def \ci{C^{\infty}}

\definecolor{rjs}{rgb}{.9,0.0,.7}

\def \colb{}

\DeclareMathOperator{\colim}{{colim}}
\DeclareMathOperator{\Hom}{{Hom}}



\begin{document}
\author[David Fisher, Boris Kalinin,  Ralf Spatzier]
{David Fisher, Boris Kalinin,  Ralf Spatzier $^{\ast }$\\ \\
(with an Appendix by James F. Davis $^{\ast \ast}$)}

\title[Global Rigidity of Higher Rank Anosov Actions on Tori and Nilmanifolds]
{Global Rigidity of Higher Rank Anosov Actions on Tori and Nilmanifolds}

\thanks{$^{\ast }$ Supported in part by NSF grants DMS-0643546, DMS-1101150 and  DMS-0906085  }
\thanks{$^{\ast \ast}$ Supported in part by NSF grant DMS-1210991}

\address{Department of Mathematics,
Indiana University, Bloomington, IN 47405}

\email{fisherdm@indiana.edu}

 \address{Department of Mathematics,
  Pennsylvania State University, University Park, PA 16802}

 \email{bvk102@psu.edu}

\address{Department of Mathematics, University of Michigan, Ann Arbor,
MI 48109.}

\email{spatzier@umich.edu}

\address{Department of Mathematics,
Indiana University, Bloomington, IN 47405}

\email{jfdavis@indiana.edu}

\date{\today}

\maketitle

\begin{abstract}

We show that sufficiently irreducible Anosov actions of higher rank abelian
groups on tori and nilmanifolds are $\ci$-conjugate to affine actions.

\end{abstract}

\section{Introduction}

An  Anosov diffeomorphism $f$ on a torus $\T^n$ is {\em affine} if $f$ lifts
to an affine map on $\R ^n$.  By a classical result of Franks and Manning,
any Anosov diffeomorphism $g$ on $\T^n$ is topologically conjugate to
an affine Anosov diffeomorphism. More precisely, there is a homeomorphism
 $\phi: \T^n \mapsto \T^n$ such that $f = \phi \circ g \circ \phi ^{-1}$ is an
 affine Anosov diffeomorphism. We call $\phi$ the Franks-Manning conjugacy.  The  linear part of  $f$ is  the map induced by $g$ on $H_1 (\T^n)$.

Anosov diffeomorphisms are rarely $C^1$-conjugate to affine ones.
For example, one can perturb a linear Anosov diffeomorphism locally
around a fixed point $p$ to  change the conjugacy class of the derivative at $p$.  The resulting diffeomorphism will still be Anosov but cannot be $C^1$-conjugate to its linearization. The situation is radically different for  $\Z^k$-actions with many Anosov diffeomorphisms.  In other words, Anosov diffeomorphisms
rarely commute with other Anosov diffeomorphisms.

It follows easily from the result for a single Anosov diffeomorphism that
an Anosov $\Z^k$-action $\alpha$ on $\T^n$ is topologically conjugate
to a $\Z^k$-action by affine Anosov diffeomorphisms. We  call this action
the {\em linearization} of $\alpha$ and denote it by $\rho$. Again, for any
$a \in \Z^k$ the linear part of $\rho (a)$ is the map induced by $\a (a)$
on $H_1 (\T^n)$. The  logarithms of the moduli of the eigenvalues of these
linear parts define additive maps $\lambda _i: \Z^k \mapsto \R$, which extend
to linear functionals on $\R^k$.  A {\em Weyl chamber } of $\rho$ is a
connected component of $\R^k - \cup _i \ker \lambda _i$.

\begin{Theorem}\label{theorem:main} Let $\alpha $ be a
$\ci$-action of $\Z ^k$, $k \geq 2$, on a torus $\mathbb T^n$ and let
$\rho$ be its linearization. Suppose that there is a $\Z^2$ subgroup of
$\Z ^k$ such that $\rho (a)$ is ergodic for every nonzero $a \in \Z^2$.
Further assume that there is an Anosov element for $\alpha$ in each
Weyl chamber  of $\rho$. Then $\alpha$ is $\ci$-conjugate to $\rho$.
\end{Theorem}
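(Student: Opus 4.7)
The plan is to upgrade the Franks--Manning conjugacy $\phi:\T^n\to\T^n$ between $\alpha$ and its linearization $\rho$, a priori only a bi-H\"older homeomorphism, to a $C^\infty$ diffeomorphism. The strategy is to produce smooth invariant (coarse Lyapunov) foliations for $\alpha$ that correspond under $\phi$ to the linear coarse Lyapunov foliations for $\rho$, to show $\phi$ is $C^\infty$ along each such foliation, and then to conclude global smoothness via an iterated application of Journ\'e's lemma.

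First I would match the Lyapunov data of $\alpha$ and $\rho$ by exploiting the bijection of periodic orbits implied by $\phi$: a Livshits-type comparison of the derivative cocycles of $\alpha$ with the constant linear cocycles of $\rho$ at matched periodic points identifies the Lyapunov functionals of $\alpha$ with those of $\rho$, so the Weyl chambers agree. By hypothesis, each Weyl chamber contains an Anosov element of $\alpha$. For any Lyapunov functional $\chi$, taking two Anosov elements $a^+, a^-$ in the Weyl chambers adjacent across the wall $\ker\chi$ yields the coarse Lyapunov foliation $\w^\chi$ as the transverse intersection of their strong stable foliations; since those stable foliations are smooth along leaves, so is $\w^\chi$. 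The image $\phi(\w^\chi)$ must coincide with the corresponding linear coarse Lyapunov foliation of $\rho$, since coarse Lyapunov leaves are characterized dynamically by exponential contraction rates and these rates are preserved under $\phi$-conjugation.

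Next I would prove the key leafwise smoothness: for each Lyapunov functional $\chi$, the restriction of $\phi$ to a leaf of $\w^\chi$ is $C^\infty$. On such a leaf the sub-$\R^k$-action coming from the negative half-space of $\chi$ acts by smooth contractions, and non-stationary normal-form theory (Guysinsky--Katok, Kalinin--Sadovskaya) provides $C^\infty$ leafwise coordinates in which the $\alpha$-action is polynomial; the same normal form holds for $\rho$, where it is simply the linear action. The $\Z^2$-ergodicity hypothesis enters at this stage through measure rigidity: it guarantees that the measure of maximal entropy for the $\Z^2$-subaction of $\alpha$ is the pushforward of Lebesgue by $\phi$, so $\phi$ is absolutely continuous. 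Combining absolute continuity with the smooth normal forms along leaves, one shows the leafwise conjugacy intertwines the two polynomial models and is therefore smooth.

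Finally, since the coarse Lyapunov distributions span $T\T^n$ (a consequence of having Anosov elements in every Weyl chamber), iteration of Journ\'e's lemma across the pairwise transverse coarse Lyapunov foliations upgrades leafwise smoothness of $\phi$ to global $C^\infty$ smoothness. I expect the main obstacle to be the leafwise smoothness step: matching the normal-form structures on $\alpha$- and $\rho$-leaves through an a priori only H\"older conjugacy requires both measure rigidity for the $\Z^2$-subgroup and a careful exclusion of invariant measurable cocycles (Jacobians and conformal structures on coarse Lyapunov distributions) that could obstruct promotion from H\"older to smooth, and this is the technical heart of the argument.
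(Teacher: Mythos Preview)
Your outline is coherent but follows a genuinely different route from the paper, and in one place relies on a step that the authors themselves could not complete in this generality.

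\textbf{What the paper actually does.} The paper does not use normal forms, measure rigidity, or Journ\'e. Instead it writes the lift of the Franks--Manning conjugacy as $\tilde\phi = I + h$ and derives a functional equation $h(x) = Q(x) + A^{-1}h(ax)$ for each element $a$. Projecting to a generalized eigenspace $V$ of $\rho$ yields $h_V(x) = \sum_{m\ge 0} A_V^{-m} Q_V(a^m x)$. The central analytic lemma shows that all partial derivatives of $h_V$ along every coarse Lyapunov foliation $\mathcal V'$ exist as continuous linear functionals on the space of $\theta$-H\"older functions for every $\theta>0$. The mechanism is a competition of exponents: choose $a$ in the ergodic $\Z^2$ close to the wall $L'$ of $\mathcal V'$, so that the derivative growth along $\mathcal V'$ has arbitrarily small exponential rate (Proposition~\ref{e-slow}), while exponential mixing of the $\Z^2$-action (Theorem~\ref{matrixcoef}) supplies a fixed rate of exponential decay for the matrix coefficients. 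A mollification argument then shows the differentiated series converges against H\"older test functions. Global smoothness of $h$ follows not from Journ\'e but from a Rauch--Taylor type wave-front-set argument (Theorem~\ref{theorem:rauchlike}, Corollary~\ref{what we need}): a function whose distributional derivatives along each foliation of a spanning family are dual to H\"older functions has empty wave front set and is therefore $C^\infty$.

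\textbf{Where your approach diverges and where it has a gap.} Your strategy is essentially that of the authors' earlier paper \cite{FKS}: normal forms along coarse Lyapunov leaves plus a holonomy argument to promote the conjugacy to smooth along leaves, then Journ\'e. That method is known to work in the totally nonsymplectic case and, as the present paper shows in Section~\ref{four}, in dimension four; but in general it yields only that $\phi$ is smooth along \emph{almost every} leaf of each coarse Lyapunov foliation, not along every leaf with the transverse continuity Journ\'e requires. The authors state this explicitly in the Epilogue of the introduction. The obstruction you flag---matching polynomial normal forms through an a priori H\"older map when resonances are present and coarse Lyapunov distributions are not one-dimensional---is exactly the point at which the \cite{FKS} argument stalls, and it is the reason the paper introduces the exponential-mixing/distributional-derivative machinery instead. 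Also, the absolute continuity of $\phi_*$(Lebesgue) that you attribute to measure rigidity is in the paper a direct consequence of uniqueness of the measure of maximal entropy (Proposition~\ref{mu}); no rigidity theorem is invoked.
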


Furthermore, for a linear $\Z^k$-action on
$\mathbb T^n$ having a $\Z^2$ subgroup acting by ergodic elements
is equivalent to several other  properties,
in particular to being genuinely higher rank \cite{St}. A linear $\Z^k$-action
is called {\em genuinely higher rank} if for all finite index subgroups $Z$ of
$\Z^k$, no quotient of the $Z$-action factors through a finite extension of
$\Z$. Hence we obtain the following corollary.

\begin{corollary}
\label{corollary:genuine}
Let $\alpha $ be a $\ci$-action of $\Z ^k$, $k \geq 2$, on a
torus $\mathbb T^n$. Suppose that the linearization  $\rho$ of $\alpha$
is genuinely higher rank. Further assume that there is an Anosov element
for $\alpha$ in each Weyl chamber of $\rho$. Then $\alpha$ is
$\ci$-conjugate to $\rho$.
\end{corollary}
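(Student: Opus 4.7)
The plan is to deduce this corollary directly from Theorem \ref{theorem:main} by invoking the equivalence of characterizations for linear higher rank abelian actions cited from \cite{St}. The only hypothesis of Theorem \ref{theorem:main} not assumed in the corollary is the existence of a $\Z^2$ subgroup of $\Z^k$ for which $\rho(a)$ is ergodic for every nonzero $a$; so the entire task is to produce such a subgroup from the assumption that $\rho$ is genuinely higher rank.

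First I would recall the precise statement one needs from \cite{St}: for a linear $\Z^k$-action on a torus, being genuinely higher rank (no quotient of any finite-index restriction factors through a virtually cyclic action) is equivalent to the existence of a $\Z^2$ subgroup all of whose nonzero elements act ergodically. Ergodicity here is the standard criterion that the linear map has no roots of unity among its eigenvalues, so the condition is really about the eigenvalue data of the integer matrices defining $\rho$. Given the cited equivalence, one simply selects such a $\Z^2$ subgroup $\Lambda \subset \Z^k$ for the linearization $\rho$.

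With $\Lambda$ in hand, the hypotheses of Theorem \ref{theorem:main} are satisfied: the smoothness and higher-rank assumptions on $\alpha$ are preserved, $\rho$ is still its linearization, $\Lambda$ provides the required ergodic $\Z^2$ subgroup, and the Weyl chamber hypothesis is exactly the one assumed in the corollary (note that the Weyl chambers are those of $\rho$ viewed as an $\R^k$-action, and these do not change when we pass to a finite-index sublattice or choose a $\Z^2$ subgroup). Applying Theorem \ref{theorem:main} then yields the $\ci$-conjugacy of $\alpha$ with $\rho$.

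The only real point that requires attention is the citation to \cite{St}: one must be sure that the precise formulation of genuinely higher rank used here agrees with Starkov's formulation and that the equivalence is stated for arbitrary (not necessarily Anosov) linear $\Z^k$-actions on tori. Once this is verified, there is no obstacle, and the corollary follows in one line from Theorem \ref{theorem:main}.
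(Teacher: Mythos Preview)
Your proposal is correct and matches the paper's approach exactly: the paper derives the corollary in one line from Theorem~\ref{theorem:main} by citing \cite{St} for the equivalence between $\rho$ being genuinely higher rank and the existence of a $\Z^2$ subgroup all of whose nonzero elements act ergodically. There is nothing further to add.
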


We can define Weyl chambers for the action $\alpha$ itself. In fact these Weyl chambers will turn out to be the same for $\alpha$ and $\rho$. Hence
existence of Anosov elements for $\alpha$ in every Weyl chamber of
$\rho$ is equivalent to existence of Anosov elements for $\alpha$ in
every Weyl chamber of $\a$.

We refer to our paper \cite{FKS} for a brief survey of  other results and methods in the classification of higher rank Anosov actions.
Our global rigidity results above are optimal except that we require an Anosov element in every Weyl chamber.   Rodriguez Hertz in \cite{RH} classifies higher rank actions on tori  assuming only one Anosov element.  However, his work requires multiple additional hypotheses such as bunching conditions and low dimensionality of coarse Lyapunov spaces.   In particular, the hypotheses in
\cite{RH} require that the rank of the acting group has to grow linearly with the dimension of the torus.  It is a conjecture due to Katok and the third author
that global rigidity holds assuming  $\alpha$ has one Anosov element.
We discuss this conjecture in more detail at the end of this introduction.

Let us briefly describe our proof which crucially uses the Franks-Manning conjugacy $\phi$ for some Anosov element of the action. As we noted,
$\phi$ also conjugates any commuting diffeomorphism to an affine map.
In consequence, each element of the action gives a functional equation
for $\phi$. This yields explicit series representations for its projection
$\phi_V$ to any generalized joint eigenspace $V$ of $\rho$. The existence
of Anosov elements of $\a$ in every Weyl chamber allows to define
{\colb coarse Lyapunov foliations as finest nontrivial intersections of  stable
and unstable foliations of Anosov elements. Since the latter are continuous, so are the coarse Lyapunov foliations. It is precisely here that existence of an Anosov element in each Weyl chamber is used.  We then employ the continuity of the coarse Lyapunov foliations  to obtain uniform estimates for contraction and expansion.   Thus  elements close to a Weyl chamber wall act almost isometrically along  suitable coarse Lyapunov foliations, or more precisely,
we can make their exponents in these estimates as close to 0 as we wish, and in particular smaller than the size of the exponent in the  exponential decay we get from exponential mixing.   We  use such elements to study the regularity of $\phi_V$ along each
coarse Lyapunov foliation $\w$. Using exponential mixing for H\"older functions we
show that the partial derivatives along $\w$ exist as distributions dual
to spaces of H\"{o}lder functions.  Then we adapt ideas from a paper by
Rauch and Taylor to show that $\phi$ is smooth. We emphasize that the
rigidity of $\Zk$-actions for $k \geq 2$ is due to the co-existence of (almost) isometric and hyperbolic behavior in the actions. This utterly fails for $\Z$-actions.
}

The paper is organized as follows. We  first explain general definitions, constructions and results for higher rank Anosov actions in Section \ref{preliminaries}.   In Section \ref{franks} we  turn to actions on tori and nilmanifolds, and  use the Franks-Manning conjugacy to derive special properties of such actions.  Most importantly, we will develop uniform growth estimates for elements near the Weyl chamber walls of the action in Section \ref{uniform estimates}.  We then turn  to the case of the torus as it is substantially more elementary than the nilmanifold case.  In Section \ref{exponential}, we establish exponential mixing for $\Z ^{k}$-actions by ergodic affine automorphisms on a torus.  For smooth actions on tori with the standard smooth structure we
prove in Section \ref{soln-cohomology} the existence of partial derivatives
 in all directions as distributions dual to H\"{o}lder functions. This concludes
 the proof for the case of standard tori using the general regularity result
that we establish in Section \ref{fromrauch}. For exotic tori, i.e. manifolds
that are homeomorphic to but not diffeomorphic to tori, in dimensions at
least 5 we can pass to a finite cover with the standard smooth structure.
For dimension 4 we give a special argument in Section \ref{four}.

 {\colb   Finally, we adapt our arguments to nilmanifolds:  Let $N$ be a simply connected nilpotent Lie group.  We call a diffeomorphism of $N$ {\em affine} if it is a composition of an automorphism of $N$ with a left translation by an element of $N$.   If $\Gamma \subset N$ is a   discrete subgroup, we call the quotient $N/\Gamma$  a   {\em nilmanifold}.
 An {\em  infra-nilmanifold} $M$ is a manifold finitely covered by a  nilmanifold.  Diffeomorphisms of $M$ covered by affine diffeomorphisms of $N$ are   again  called {\em affine}.  The Franks-Manning conjugacy theorem generalizes to  infra-nilmanifolds: Suppose $M'$ is a smooth manifold homeomorphic with an
 infra-nilmanifold.  Then every Anosov diffeomorphism  of $M'$ is conjugate to an affine diffeomorphism 
 of $M$ by a homeomorphism $\phi$.  {\colb We  call $\phi$ the Franks-Manning conjugacy.   Given an action $\alpha$ of $\Zk$ on $M'$ which contains an Anosov diffeomorphism, then its Franks-Manning conjugacy   jointly conjugates all $\alpha (a), a \in \Zk$, to affine diffeomorphisms $\rho (a)$.   We call $\rho$ the linearization of $\alpha$. }
 Now we can state our main result for  nilmanifolds:

 \begin{Theorem}\label{theorem:nil} Let $\alpha $ be a
$\ci$-action of $\Z ^k$, $k \geq 2$, on a compact infra-nilmanifold $N/\Gamma$ and let
$\rho$ be its linearization. Suppose that there is a $\Z^2$ subgroup of
$\Z ^k$ such that $\rho (a)$ is ergodic for every nonzero $a \in \Z^2$.
Further assume that there is an Anosov element for $\alpha$ in each
Weyl chamber  of $\rho$. Then $\alpha$ is $\ci$-conjugate to $\rho$.
\end{Theorem}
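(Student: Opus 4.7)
The plan is to adapt the torus proof of Theorem~\ref{theorem:main} to nilmanifolds by an induction on the nilpotency class $s$ of $N$, after first reducing from infra-nilmanifolds to genuine nilmanifolds via a finite cover. Given $M' = N/\Gamma$, choose a torsion-free finite-index subgroup $\Gamma' \subset \Gamma \cap N$; then $\tilde M = N/\Gamma'$ is a nilmanifold finitely covering $M'$. A finite-index subgroup of $\Z^k$ lifts to $\tilde M$, and smoothness of the Franks-Manning conjugacy on $\tilde M$ descends to $M'$ because it is a local property. Exotic smooth structures on infra-nilmanifolds in dimension $\geq 5$ can be handled by passing to a smooth finite cover carrying the standard smooth structure, in the spirit of Section~\ref{four} for the toral case.

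For the inductive step ($s=1$ is Theorem~\ref{theorem:main}), consider the projection $\tilde M \to \tilde M_1 := \tilde M / N_2\Gamma'$ onto the nilmanifold of nilpotency class $s-1$. The descending central series subgroup $N_2$ is preserved by every automorphism of $N$, so $\alpha$ and $\rho$ descend to commuting actions on $\tilde M_1$ and the Franks-Manning conjugacy $\phi$ descends to a Franks-Manning conjugacy $\bar\phi$ between them; by the inductive hypothesis $\bar\phi$ is $\ci$. To upgrade to smoothness of $\phi$ on $\tilde M$ itself, I would first establish exponential mixing for ergodic affine $\Z^k$-actions on $\tilde M$, in place of the toral Section~\ref{exponential}; this can be carried out via the Kirillov decomposition of $L^2(\tilde M)$ into $N$-primary components and by bootstrapping from the already established exponential mixing on the abelianized quotient. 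The uniform growth estimates of Section~\ref{uniform estimates} depend only on the linearization $\rho_\ast$ acting on $\mathfrak n$ and apply verbatim. With these tools in hand, the distributional-derivative argument of Section~\ref{soln-cohomology} goes through along each coarse Lyapunov foliation of $\alpha$, and the regularity result of Section~\ref{fromrauch} then promotes the distributional partial derivatives of $\phi$ to classical ones.

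The main obstacle is that the joint generalized eigenspaces of $\rho_\ast$ on $\mathfrak n$, whose integral foliations are the coarse Lyapunov foliations of $\rho$, are not Lie subalgebras of $\mathfrak n$ in general; they merely refine the filtration by the descending central series. Consequently these foliations are not orbit foliations of a subgroup action, and the functional equation satisfied by $\phi$ along a coarse Lyapunov leaf does not admit the clean Fourier series representation that drives the toral argument. The technical heart of the proof is to organize the distributional computation so that H\"older test functions on each primary summand of the Kirillov decomposition detect the partial derivatives of $\phi$ along the coarse Lyapunov foliations, while carefully tracking how the eigenspace filtration of $\mathfrak n$ sits inside the descending central series at each stage of the induction. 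This alignment between the algebraic filtration of $N$ and the dynamical coarse Lyapunov decomposition is where essentially all of the new work beyond the torus case concentrates.
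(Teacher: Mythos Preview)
Your overall architecture---pass to a finite cover to get a genuine nilmanifold with standard smooth structure, then peel off abelian layers via a central-series filtration and solve an abelian cocycle equation at each stage---matches the paper's strategy in spirit. But two points deserve correction.

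\textbf{Exponential mixing is the hard input, not a bootstrap.} You write that exponential mixing for ergodic affine $\Z^k$-actions on nilmanifolds ``can be carried out via the Kirillov decomposition \ldots\ by bootstrapping from the already established exponential mixing on the abelianized quotient.'' This is where your proposal has a genuine gap. Exponential mixing on nilmanifolds is substantially deeper than the toral case and does not follow from representation theory alone; the paper invokes the Gorodnik--Spatzier theorem, which in turn rests on Green--Tao's quantitative equidistribution of polynomial sequences. Without this, you cannot run the distributional-derivative argument of Section~\ref{soln-cohomology} on the full nilmanifold. The paper explicitly flags this as the non-elementary ingredient.

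\textbf{Your ``main obstacle'' is not the obstacle the paper faces.} You locate the difficulty in the fact that the coarse Lyapunov eigenspaces of $\rho_\ast$ on $\mathfrak n$ are not Lie subalgebras, and propose to align the Kirillov decomposition with the dynamical one. The paper sidesteps this entirely. It never needs the linear coarse Lyapunov foliations to be subgroup orbits. Instead it writes the lifted conjugacy as $\tilde\phi = h\cdot I$ and decomposes $h$ multiplicatively along the \emph{derived series} $N\supset N'\supset N''\supset\cdots$ (not the lower central series you use; note also that $N/N_2=N/[N,N]$ is abelian, not of class $s-1$). At each stage one obtains, after projecting to the abelian quotient $N^{(i)}/N^{(i+1)}$, an equation $\bar h_i(x)=\bar A^{-1}\bar h_i(ax)+\bar Q_i(x)$ with $\bar Q_i$ smooth---formally identical to the toral equation~\eqref{hVoper}. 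One then differentiates along the coarse Lyapunov foliations of $\alpha$, which are merely H\"older foliations with smooth leaves; no Lie-algebraic structure on the $\rho$-side is required. The Margulis--Qian reduction to abelian cocycles plus Gorodnik--Spatzier mixing is the whole story; the Kirillov/alignment programme you outline is unnecessary.
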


  Our  main result reduces
 to the case of standard nilmanifolds, i.e. nilmanifolds  with the differentiable structure coming from the ambient Lie group. Indeed, there are no Anosov diffeomorphisms on non-toral nilmanifolds in dimensions 4 or less, and the result
 by J. Davis in the appendix shows that any nilmanifold of dimension
 at least 5 is finitely covered by a standard nilmanifold.

 For standard nilmanifolds we proceed  similarly to  the toral case.   We adapt arguments of Margulis and Qian  \cite{MaQi} to reduce regularity of the conjugacy  to regularity of the solution of a cohomology equation.  The relevant cocycle however  takes values in a nilpotent group, and  is not directly amenable to our approach.  Instead, we consider  suitable factors of the cocycle in various abelian quotients of  the derived series of $N$.  Again we prove regularity of coboundaries for the resulting cocycles by exponential mixing of the $\Z ^k$ action, uniform expansion and contraction of elements close to Weyl chamber walls, and showing existence of derivatives via distributions dual to H\"{o}lder  functions.
 Unlike in the toral case, exponential mixing of actions by affine automorphisms does not follow from  elementary Fourier analysis.  Rather this was established by Gorodnik and the third author in\cite{GorodnikSpatzier}.
 We remark that this approach yields  the first rigidity results for higher rank actions on general nilmanifolds.  Earlier cocycle and local rigidity results, by A. Katok and the third author, were only proved for actions which were higher rank both on the toral factor as well as the fibers (e.g. \cite{KS-IHES}).  There, cocycles were straightened out separately on the base and the fibers.
Exponential mixing of these actions thus allows for a much simpler and direct approach, and is {\colb also} used in   \cite{GorodnikSpatzierII} to prove cocycle rigidity results.

}

 {\it Epilogue}: We conclude this paper with some remarks about the
 conjecture by Katok and Spatzier that genuinely higher rank abelian
 Anosov actions  are smoothly conjugate to affine actions.
Using the arguments of our earlier paper \cite{FKS}, we can show that the conjugacy is always smooth along almost every leaf of each coarse Lyapunov foliation.  However, we have no further evidence in support of this conjecture and in fact have some doubts about its truth. In
\cite{Gogolev-holder} Gogolev constructed a diffeomorphism of a torus which is H\"{o}lder conjugate to an Anosov diffeomorphism but itself is not Anosov. Thus having one
 Anosov element may not imply that most elements are Anosov.
 In \cite{FarrellJones}, Farrell and Jones constructed Anosov diffeomorphisms on exotic tori.  In light of this construction it seems obvious to ask:

\begin{question}
\label{question:exotic}
Are there genuinely higher rank Anosov $\Z^k$ actions on exotic tori?
\end{question}

 As  exotic tori are finitely covered by standard tori, such actions would lift to actions on standard tori.  The latter could not be smoothly equivalent to their linearizations since a  smoothness result for conjugacy would descend to the $C^{0}$ conjugacy between the exotic and standard torus.  Thus such examples would also give counterexamples to the conjecture by Katok and Spatzier even when the  underlying smooth structure on the torus is standard.

 We remark here that the construction in \cite{FarrellJones}, further explained and simplified
in \cite{FarrellGogolev}, does not adapt easily to the case of actions of higher rank abelian groups.
 Indeed because of the delicate  cutting and pasting arguments used in their constructions,
it would be hard to
guarantee that different elements continue to commute.  As a consequence of Theorem \ref{theorem:main},
a positive answer to Question \ref{question:exotic} can only occur for an action where relatively few
elements are Anosov.
Furthermore, by the results in \cite{RH},  a positive answer to Question \ref{question:exotic} seems unlikely if the dynamically defined foliations for the
action have dimensions $1$ or $2$.
The Farrell-Jones construction proceeds by cutting and pasting
exotic spheres into the torus.  This suggests, in order to construct examples for Question \ref{question:exotic}, one would want to glue in the exotic sphere in a manner somehow subordinate to the dynamical
foliations using their high dimension.

We are indebted to  J. Rauch for discussions concerning his result with M. Taylor on  regularity  for distributions.  A strengthening of one of their theorems is fundamental to our approach and multiple discussions with Rauch were a key to our first believing and then proving this result.  We  also thank  A. Gorodnik and J. Conlon for various discussions.  Finally, we are  more than grateful to J. Davis for discussions concerning non-standard smooth structures and for writing the appendix on exotic differentiable structures on nilmanifolds.


\section{Preliminaries}   \label{preliminaries}

Throughout the paper, the smoothness of diffeomorphisms, actions,
and manifolds is assumed to be $\Ci$, even though all definitions and
some of the results can be formulated  in lower regularity.

\subsection{Anosov actions of $\,\Zk$}
\label{Anosov actions} $\;$
\vskip.1cm

Let $a$ be a diffeomorphism of a compact manifold $\M$.
We recall that  $a$ is  {\em Anosov} if there exist a continuous $a$-invariant
decomposition  of the tangent bundle $T\M=E^s_a \oplus E^u_a$ and constants
$K>0$, $\lambda>0$ such that for all $n\in \mathbb N$
\begin{equation} \label{anosov}
  \begin{aligned}
 \| Da^n(v) \| \,\leq\, K e^{- \lambda n} \| v \|&
     \;\text{ for all }\,v \in E^s_a, \\
 \| Da^{-n}(v) \| \,\leq\, K e^{- \lambda n}\| v \|&
     \;\text{ for all }\,v \in E^u_a.
 \end{aligned}
 \end{equation}
The distributions $E_a^s$ and $E_a^u$ are called the {\em stable} and {\em unstable}
distributions of  $a$.
\vskip.2cm

Now we consider a $\Zk$ action $\alpha$ on a compact manifold $\M$
via diffeomorphisms.
The action is called {\em Anosov}$\,$ if there is an element which acts as an
Anosov diffeomorphism. For an element $a$ of the acting group we denote
the corresponding diffeomorphisms by $\a (a)$ or
simply by $a$ if the action is fixed.

The distributions $E_a^s$ and $E_a^u$  are H\"{o}lder continuous and
tangent to the stable and unstable foliations $\w_a^s$ and $\w_a^u$
respectively \cite{HPS}. The leaves of these foliations are $C^\infty$
injectively immersed Euclidean spaces. Locally, the immersions vary
continuously in the $\Ci$ topology.  In general, the distributions $E^s$
and $E^u$ are only H\"older continuous transversally
to the corresponding foliations.

\subsection{Lyapunov exponents and coarse Lyapunov distributions}
\label{Lyapunov} $\;$
\vskip.1cm

First we recall some basic facts from the theory of non-uniform
hyperbolicity for a single diffeomorphism, see for example \cite{BP}.
Let  $a$ be a diffeomorphism of a compact manifold $\M$ preserving an
ergodic  probability measure $\mu$. By Oseledec' Multiplicative Ergodic
Theorem, there exist finitely many numbers $\chi _i $ and an invariant  measurable
splitting of the tangent bundle $T\M = \bigoplus E_i$ on a set of full measure
such that the forward and backward Lyapunov exponents of $v \in  E_i $
are $\chi _i $.  This splitting is called {\em Lyapunov decomposition}.
 We define the stable distribution of $a$ with respect to $\mu$
as $E^-_a= \bigoplus _{\chi _i <0}  E_i$. The subspace $E^-_a(x)$
is tangent $\mu$-a.e. to the stable manifold $W^-_a(x)$.
More generally, given any $\theta <0$ we can define the strong
stable distribution by $E^{\theta}_a= \bigoplus _{\chi _i \le \theta}  E_i$
which is tangent $\mu$-a.e. to the strong stable manifold $W^{\theta}_a(x)$.
$W^{\theta}_a(x)$ is a smoothly immersed Euclidean space. For a
sufficiently small ball $B(x)$, the connected component of
$W^{\theta}_a(x) \cap B(x)$, called {\em local} manifold, can
be characterized by the exponential contraction property:
for any sufficiently small $\e >0$ there exists $C=C(x)$ such that
\begin{equation} \label{loc}
W^{\theta,loc}_a(x) = \{y \in B(x) \, | \; \d(a^n x, a^n y) \le C e^{(\theta +\e)n}
\quad \forall n\in \mathbb N \}.
\end{equation}
The unstable distributions and manifolds are defined similarly.
In general, $E^-_a$ is only measurable and depends on the measure $\mu$.
However, if $a$ is an  Anosov diffeomorphism then $E^-_a$ for any
measure always agrees with the
continuous stable distribution $E^s_a$. Indeed, $E^s_a$ cannot contain
a vector with a nontrivial component in some $E_j$ with $\chi _j \geq 0$
since such a vector does not satisfy \eqref{anosov}.  Hence $E^s_a \subset \bigoplus _{\chi _i <0}  E_i$.  Similarly, the unstable distribution $E^u_a \subset \bigoplus _{\chi _i  > 0}  E_i$.  Since $TM =E^s_a \oplus E^u_a$,
 both inclusions have to be equalities.

Now we consider the case of $\Zk$ actions. Let $\mu$ be an ergodic
probability measure for a $\Zk$ action $\alpha$ on a compact manifold $\M$.
By commutativity, the  Lyapunov decompositions
for  individual elements of $\Zk$ can be refined to a joint invariant splitting for
the action. The following proposition from \cite{KaSa3} describes the
Multiplicative Ergodic Theorem for this case. See  \cite{KaK} for more
details on the Multiplicative Ergodic Theorem and related notions for
higher rank abelian actions.

 \begin{proposition}
 There are finitely many linear functionals $ \chi $ on $\Zk$, a set of
 full measure ${\cal P}$, and an $\alpha$-invariant  measurable splitting
 of the tangent bundle  $T\M = \bigoplus E_{\chi}$ over ${\cal P}$  such that
  for all $a \in \Zk$ and $ v \in E_{\chi}$, the Lyapunov exponent of $v$ is $\chi
  (a)$, i.e.
 $$
   \lim _{n \rightarrow \pm \infty }
   {n}^{-1} \log \| D a ^n  (v) \| = \chi (a),
 $$
  where $\| .. \|$ is a continuous norm on $T\M$.
\end{proposition}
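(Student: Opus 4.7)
The plan is to reduce this multi-parameter statement to the classical single-transformation Oseledec theorem via commutativity, then upgrade linearity from generators of $\Z^k$ to arbitrary elements.

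First I would fix generators $a_1,\dots,a_k$ of $\Z^k$. Apply Oseledec's Multiplicative Ergodic Theorem to $\alpha(a_1)$ acting on $(\M,\mu)$; this produces an $\alpha(a_1)$-invariant measurable splitting $T\M=\bigoplus_i F_i^{(1)}$ on a conull set, where each $F_i^{(1)}$ carries a single Lyapunov exponent $\chi_i^{(1)}$ for $\alpha(a_1)$. Because $\alpha(a_2)$ commutes with $\alpha(a_1)$, the push-forward $D\alpha(a_2)F_i^{(1)}$ is again a Lyapunov subspace of $\alpha(a_1)$ with the same exponent, so $\alpha(a_2)$ preserves each $F_i^{(1)}$ on a conull set. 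Apply Oseledec again to $\alpha(a_2)$ restricted to each $F_i^{(1)}$ to obtain a joint refinement. Iterating through $a_1,\dots,a_k$ yields an $\alpha$-invariant measurable splitting $T\M=\bigoplus_\chi E_\chi$ over a conull set $\P$, indexed by the possible tuples $(\chi(a_1),\dots,\chi(a_k))$ of individual exponents of the generators, such that each generator $a_j$ has a single Lyapunov exponent $\chi(a_j)$ on $E_\chi$.

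Next I would define $\chi:\Z^k\to\R$ on the generators by these values and extend by linearity; it remains to check that for $a=\sum m_j a_j$ and $v\in E_\chi(x)$ one genuinely has $\lim_{n\to\pm\infty} n^{-1}\log\|D\alpha(a)^n v\|=\sum m_j\chi(a_j)$. Using the cocycle identity $D\alpha(na)=D\alpha(nm_1 a_1)\cdots D\alpha(nm_k a_k)$ along with invariance of $E_\chi$ under each $\alpha(a_j)$, one bounds $\log\|D\alpha(na)v\|$ above and below by sums of the form $\sum_j \log\|D\alpha(nm_j a_j)|_{E_\chi(y_j)}\|$ evaluated at points $y_j$ on the orbit of $x$. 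Applying the single-transformation MET (together with a Tempering Kernel / Lusin argument, or equivalently Kingman's subadditive theorem applied to the operator norm cocycle on $E_\chi$) gives that these quantities are asymptotically $n\,m_j\,\chi(a_j)$ for $\mu$-a.e.\ starting point, so both sides converge to $\sum_j m_j \chi(a_j)=\chi(a)$. Finiteness of the set of functionals $\chi$ is immediate since each $\chi$ is determined by its finitely many values on the generators, and the individual MET produces only finitely many exponents per generator.

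The only real subtlety, and the step I expect to be the main obstacle, is controlling the norms $\|D\alpha(nm_j a_j)|_{E_\chi(y_j)}\|$ at points $y_j$ that drift under the orbit rather than staying near a single basepoint. This is handled by the usual device of replacing the original norm by a Lyapunov (tempered) norm, which converts the pointwise exponential estimates into uniform ones along orbits up to a subexponential multiplicative error; the subexponential error disappears after taking $n^{-1}\log$ and passing to the limit. After this, the linearity of $a\mapsto\chi(a)$ on $\Z^k$ is automatic, and $\chi$ extends by $\Q$-linearity and continuity to a linear functional on $\R^k$, completing the proof.
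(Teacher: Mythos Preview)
Your approach is correct and is exactly the one the paper has in mind, but note that the paper does not actually prove this proposition: it is quoted from \cite{KaSa3} (with a pointer to \cite{KaK} for further details), and the surrounding text offers only the one-sentence indication ``By commutativity, the Lyapunov decompositions for individual elements of $\Zk$ can be refined to a joint invariant splitting for the action.'' That is precisely your strategy of iteratively refining the Oseledec splittings of the generators and then checking linearity on arbitrary $a\in\Zk$.

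One small point worth making explicit in your write-up: the measure $\mu$ is assumed ergodic only for the full $\Zk$-action, not for the individual generators $\alpha(a_j)$, so when you apply Oseledec to $\alpha(a_1)$ alone the exponents are a priori measurable functions of the basepoint rather than constants. Constancy follows because commutativity makes these exponent functions $\alpha(a_j)$-invariant for every $j$, hence $\Zk$-invariant, and then $\Zk$-ergodicity forces them to be a.e.\ constant. With that said, your handling of the linearity step via tempered (Lyapunov) norms to absorb the orbit drift is the standard device and is fine.
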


The splitting  $\bigoplus E_{\chi}$ is called the
{\em Lyapunov decomposition},
and the linear functionals $\chi$, extended to linear functionals on $\Rk$,
 are called the {\em Lyapunov exponents} of $\alpha$. The hyperplanes
 $\,\ker \,\chi \subset \Rk$ are called the {\em Lyapunov hyperplanes} or
 {\em Weyl chamber walls}, and the connected components of
 $\,\Rk - \cup _{\chi} \ker \chi $ are called the {\em Weyl chambers} of
 $\alpha$. The elements in the union of the Lyapunov hyperplanes are
 called {\em singular}, and the elements in the union of the Weyl chambers
are called {\em regular}.

Consider  a $\Zk$ action by {\em automorphisms} of a torus
$M=\T^d=\R^d/\Z^d$ or, more generally, a nilmanifold $M=N/\G$,
where $N$ is a simply connected nilpotent Lie group and $\G \subset G$
is a (cocompact) lattice. In this case, the Lyapunov decomposition
is determined by the eigenspaces of the $d \times d$ matrix that
defines the toral automorphism or by the eigenspaces of the induced
automorphism on the Lie algebra of $N$. In particular,
every Lyapunov distribution is smooth and in the toral case integrates
to a linear foliation. The Lyapunov exponents are given by the logarithms
of the moduli of the eigenvalues. Hence they are independent of the
invariant measure and give uniform estimates of expansion and contraction
rates.

In the non-algebraic case, the individual Lyapunov distributions are in
general only measurable and depend on the given measure. This can
be already seen for a single diffeomorphism, even if Anosov. However,
as we observed above, the full stable distribution $E^s_a$ of an Anosov
element $a$ always agrees with $\bigoplus _{\chi  (a) <0}  E_\chi$ on
a set of full measure for any measure.

For higher rank actions, {\em coarse Lyapunov distributions} play a similar
role to the stable and unstable distributions for an Anosov diffeomorphism.
For any Lyapunov functional $\chi$ the coarse Lyapunov distribution is the
direct sum of all Lyapunov spaces with Lyapunov exponents, as functionals,
positively proportional to $\chi$:
$$
   E^{\chi} = \oplus E_{\chi '}, \quad \chi ' = c \, \chi \,\text{ with }\, c>0.
$$

For an algebraic action such a distribution is a finest nontrivial intersection
of the stable distributions of certain Anosov elements of the action.
For nonalgebraic actions, however, this is not a priori clear. It was shown
in \cite[Proposition 2.4]{KaSp} that, in the presence of sufficiently many
Anosov elements, the coarse Lyapunov distributions are well-defined,
continuous, and tangent to foliations with smooth leaves. We quote the
discrete time version \cite[Proposition 2.2]{KaSa4}. We denote the set
of all Anosov elements in $\Zk$  by $\A$.

\begin{proposition} \label{CoarseLyapunov}
Let $\alpha$ be an Anosov action of $\Zk$ and let $\mu$ be an ergodic  probability
measure for $\a$ with full support. Suppose that there exists an Anosov element
in every Weyl chamber defined by $\mu$. Then for each Lyapunov exponent
$\chi $ the coarse Lyapunov distribution can be defined as
$$
  E^{\chi}(p) =  \bigcap _{\{a \in \A\, | \; \chi (a) <0\}} E^s _a(p) \;=
  \bigoplus_ {\{ \chi ' = c \, \chi\,|\;  c>0 \}}  E_{\chi '} (p)
 $$
on the set ${\cal P}$ of full measure where the Lyapunov splitting exist.
Moreover, $E^{\chi}$ is H\"{o}lder continuous, and thus it can
be extended to a H\"{o}lder distribution tangent to the foliation
$\w ^\chi = \bigcap  _{\{a  \in \A \,  \mid \, \chi (a) <0\}} \ws _a$
with uniformly $\Ci$ leaves.
\end{proposition}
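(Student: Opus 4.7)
The plan is to first identify the intersection of stable distributions algebraically on the full measure set $\mathcal{P}$ using the Lyapunov decomposition, then reduce to a finite intersection using finiteness of the Lyapunov exponents, and finally propagate Hölder regularity and the foliation structure to all of $M$ using the continuity of the individual stable foliations together with the Weyl chamber hypothesis.

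First, I would establish the identity
\[
\bigcap_{\{a \in \A\,:\,\chi(a) < 0\}} E^s_a(p) \;=\; \bigoplus_{\{\chi' = c\chi\,:\,c > 0\}} E_{\chi'}(p)
\]
pointwise on $\mathcal{P}$. Since for any Anosov element $a$ one has $E^s_a = \bigoplus_{\chi_i(a) < 0} E_{\chi_i}$ on $\mathcal{P}$ by the observation recalled earlier in the section, the intersection equals the sum of those $E_{\chi'}$ satisfying $\chi'(a) < 0$ for every $a \in \A$ with $\chi(a) < 0$. That this condition characterizes positive proportionality is verified by a short convex-geometric argument: if $\chi'$ is not a positive multiple of $\chi$, the open cone $\{t \in \R^k : \chi(t) < 0,\, \chi'(t) > 0\}$ is nonempty and meets some Weyl chamber, which by hypothesis contains an Anosov element witnessing the contradiction.

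Since there are only finitely many Lyapunov exponents, I would extract a finite subfamily $a_1,\dots,a_m \in \A$---one witnessing each $\chi'$ not positively proportional to $\chi$---for which $\bigcap_j E^s_{a_j}$ already equals $\bigoplus_{c > 0} E_{c\chi}$ on $\mathcal{P}$. I then define $E^\chi(p) := \bigcap_j E^s_{a_j}(p)$ and $\w^\chi(p) := \bigcap_j \w^s_{a_j}(p)$ for every $p \in M$. Each $E^s_{a_j}$ is Hölder continuous, and each $\w^s_{a_j}$ has uniformly $\Ci$ leaves varying continuously in the $\Ci$ topology by \cite{HPS}. The dimension of the intersection is upper semicontinuous and takes the value $d := \sum_{c > 0} \dim E_{c\chi}$ on $\mathcal{P}$. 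To upgrade this to all of $M$, I would exploit the dynamical characterization
\[
\w^\chi(p) = \{y \in M : \d(a_j^n y, a_j^n p) \to 0 \text{ exponentially for every } j\},
\]
which depends continuously on $p$ and, by uniform hyperbolicity of each $a_j$, produces topologically embedded disks of a fixed dimension. Constant-dimensional finite intersection of Hölder distributions is Hölder in the Grassmannian, and each leaf, being a transverse intersection of uniformly $\Ci$ leaves, is itself uniformly $\Ci$. Independence of $\w^\chi$ from the chosen finite subfamily follows from the first step, since any further Anosov element $a$ with $\chi(a) < 0$ imposes only redundant constraints.

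The main obstacle is promoting the constant dimension of $\bigcap_j E^s_{a_j}$ from the full-measure set $\mathcal{P}$ to every point of $M$: a priori the intersection could jump up at exceptional points where the chosen stable distributions accidentally align. The Weyl chamber hypothesis is precisely what rules this out, as it supplies enough dynamically distinct Anosov elements to force uniform transversality of the $E^s_{a_j}$ globally on $M$.
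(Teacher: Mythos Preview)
The paper does not actually prove this proposition; it is quoted verbatim from \cite[Proposition~2.2]{KaSa4} (the discrete-time version of \cite[Proposition~2.4]{KaSp}), so there is no in-paper argument to compare against. Your outline is in fact essentially the argument found in those references, and your first two steps---the identification on $\mathcal P$ via the convex-geometric separation of non-proportional functionals, and the reduction to a finite family $a_1,\dots,a_m$---are correct as stated.

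The gap is exactly where you flag it: you assert that the Weyl chamber hypothesis ``forces uniform transversality of the $E^s_{a_j}$ globally on $M$,'' but you do not say how. Upper semicontinuity of $\dim\bigcap_j E^s_{a_j}$ together with density of $\mathcal P$ only gives $\dim\bigcap_j E^s_{a_j}(p)\ge d$ everywhere; it does not rule out a jump upward. Your appeal to the dynamical description of $\w^\chi(p)$ as a set of exponentially asymptotic points does not help either, because without transversality you cannot conclude that this set is a manifold of the expected dimension. The missing step is a global dimension count across \emph{all} coarse Lyapunov classes simultaneously. Choose one Anosov element in each Weyl chamber. For two distinct coarse classes $[\chi]\ne[\chi']$ there is some chosen $a$ with $\chi(a)$ and $\chi'(a)$ of opposite sign, so $E^\chi(p)\subset E^s_a(p)$ while $E^{\chi'}(p)\subset E^u_a(p)$; hence $E^\chi(p)\cap E^{\chi'}(p)=\{0\}$ at \emph{every} $p\in M$. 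Thus the continuous distributions $E^\chi$ are pairwise independent everywhere, each has dimension at least $d_\chi$, and $\sum_\chi d_\chi=\dim T_pM$. This forces $\dim E^\chi(p)=d_\chi$ for all $p$, which is the transversality you need; H\"older continuity and the smooth-leaf foliation structure then follow as you indicate.
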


Note that ergodic measures with full support always exist if a $\Zk$ action
contains a transitive Anosov element. A natural example is given by the
measure $\mu$ of maximal entropy for such an element, which is unique
\cite[Corollary 20.1.4]{HaKa} and hence is invariant under the whole action.
{\colb We emphasize that it is precisely here where we use the assumption that every Weyl chamber contains an Anosov element.
We will use Proposition \ref{CoarseLyapunov}  in the next section to get uniform estimates for elements close to Weyl chamber walls. }

Since a coarse Lyapunov distribution is defined by a collection of
positively proportional Lyapunov exponents, it can be uniquely identified
with the subset of $\Rk$ where these functionals are positive (resp.
negative). This subset is called the {\em positive (resp. negative)
Lyapunov half-space}. Similarly, a coarse Lyapunov distribution can
be defined with the oriented Lyapunov hyperplane that separates
the corresponding positive and negative Lyapunov half-spaces.

\section{$\Zk$  actions on tori and nilmanifolds and uniform estimates.}
\label{franks}

From now on we consider Anosov $\Zk$ actions on tori and nilmanifolds.  In this section, we explore the special features we obtain thanks to the Franks-Manning conjugacy.   This allows us to control invariant measures, Lyapunov exponents, and even  upper bounds of expansion for  elements close to a Weyl chamber wall (cf. Section \ref{uniform estimates}).

\subsection{Invariant measures and Lyapunov exponents} Let $f$ be an Anosov diffeomorphism of a torus $M= \T^d$ or, more
generally, of a nilmanifold $M=N/ \Gamma$. By the results of Franks
and Manning in \cite{Fr,M}, $f$ is topologically conjugate to an Anosov
automorphism $A : M \to M$, i.e. there exists a homeomorphism
$\phi : M \to M$ such that $A \circ \phi = \phi \circ f$. The conjugacy
$\phi$ is  bi-H\"older, i.e. both $\phi$ and $\phi^{-1}$ are H\"older
continuous with some H\"older exponent $\gamma$.

Now we consider an Anosov $\Zk$ action $\a$ on a nilmanifold $M$.
Fix an Anosov element $a$ for $\a$. Then we have $\phi$ which
conjugates $\a(a)$ to an automorphism $A$. By \cite[Corollary 1]{W}
any homeomorphism of $M$ commuting with $A$ is an affine automorphism.
Hence we conclude that $\phi$ conjugates $\a$ to an action $\rho$
by affine automorphisms. We will call $\rho$ an {\em algebraic action}
and refer to it as the {\em linearization} of $\a$.

Now we describe the preferred invariant measure for $\a$
(cf.  \cite[Remark 1]{KaK07}).
We denote by $\lambda$ the normalized Haar measure on the
nilmanifold $M$. Note that $\lambda$ is invariant under any
affine automorphism of $M$ and is the unique measure of
maximal entropy for any affine Anosov automorphism.

\begin{proposition} \cite[Proposition 2.4]{FKS} \label{mu}
The action $\a$ preserves an absolutely continuous measure
$\mu$ with smooth positive density. Moreover,
$\mu = \phi^{-1} _\ast (\lambda)$ and for any Anosov element
$a\in \Zk$, $\mu$ is the unique measure of maximal entropy for $\a(a)$.
\end{proposition}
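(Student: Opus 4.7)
The plan is to take $\mu := \phi^{-1}_{\ast}\lambda$ as the candidate measure and verify each of the three claims in turn. The $\a$-invariance is immediate from the intertwining relation $\a(b) = \phi^{-1}\circ\rho(b)\circ\phi$ together with the preservation of Haar under every affine automorphism $\rho(b)$:
\[
\a(b)_{\ast}\mu \;=\; \phi^{-1}_{\ast}\rho(b)_{\ast}\phi_{\ast}\phi^{-1}_{\ast}\lambda \;=\; \phi^{-1}_{\ast}\rho(b)_{\ast}\lambda \;=\; \phi^{-1}_{\ast}\lambda \;=\; \mu.
\]

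For the maximal-entropy claim, I would invoke the classical fact that $\lambda$ is the unique measure of maximal entropy for any affine Anosov automorphism of a nilmanifold. Since $\phi$ is a topological conjugacy, it preserves topological entropy and intertwines the metric entropies of corresponding invariant measures. Thus $h_{\mu}(\a(a)) = h_{\lambda}(A) = h_{\mathrm{top}}(A) = h_{\mathrm{top}}(\a(a))$, and any other measure of maximal entropy for $\a(a)$ would push forward under $\phi_{\ast}$ to a measure of maximal entropy for $A$, which must be $\lambda$; hence $\mu$ is the unique measure of maximal entropy for $\a(a)$. Since this argument applies verbatim to every Anosov element of $\a$, the same $\mu$ is simultaneously the unique measure of maximal entropy for all of them.

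The main obstacle is establishing absolute continuity with smooth positive density. The strategy is to exploit the simultaneous maximal-entropy property to identify $\mu$ as an SRB measure for each Anosov element. For a fixed Anosov $a$, the identity $h_{\mu}(\a(a)) = h_{\mathrm{top}}(A) = \sum_{\chi(A)>0}\chi$ combined with Ruelle's inequality applied to both $\a(a)$ and $\a(a)^{-1}$ pushes one toward equality in Ruelle's inequality for $\mu$; Pesin's entropy formula (in Ledrappier's sharp form) then yields absolutely continuous conditionals of $\mu$ along the unstable manifolds of $\a(a)$, and the symmetric argument applied to $\a(a)^{-1}$ gives the same along the stable manifolds. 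The local product structure of the Anosov foliations on $M$ then promotes these conditional absolute continuities to absolute continuity of $\mu$ with respect to Haar on $M$.

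The smoothness of the density is the most delicate step. Here I would write $\log(d\mu/d\mathrm{vol})$ as a coboundary for the Jacobian cocycle of $\a(a)$ and apply Livsic-type regularity results, using the smoothness of the Anosov splittings along their leaves together with the compatibility constraints imposed by simultaneous $\a$-invariance across different Anosov elements, to upgrade H\"older regularity of the density to $C^{\infty}$. I expect this bootstrap --- matching fine Lyapunov and regularity information across the merely H\"older Franks--Manning conjugacy $\phi$ --- to be the chief technical obstacle of the proof.
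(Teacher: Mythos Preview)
The paper does not give its own proof of this proposition; it is quoted verbatim from \cite[Proposition 2.4]{FKS}. So there is no in-paper argument to compare against, only the cited one. Your first two paragraphs --- $\a$-invariance of $\mu=\phi^{-1}_\ast\lambda$ and the identification of $\mu$ as the unique measure of maximal entropy for every Anosov $\a(a)$ via the topological conjugacy --- are correct and are exactly what the cited proof does at that stage.

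The gap is in your third paragraph. Applying Ruelle's inequality to $\a(a)$ and to $\a(a)^{-1}$ gives
\[
\sum_{\chi_i^{\rho}(a)>0}\chi_i^{\rho}(a)\;=\;h_\mu(\a(a))\;\le\;\sum_{\chi_j^{\mu}(a)>0}\chi_j^{\mu}(a),
\qquad
-\sum_{\chi_i^{\rho}(a)<0}\chi_i^{\rho}(a)\;\le\;-\sum_{\chi_j^{\mu}(a)<0}\chi_j^{\mu}(a),
\]
which together yield only $\sum_i|\chi_i^{\rho}(a)|\le\sum_j|\chi_j^{\mu}(a)|$, not equality in Pesin's formula. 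To force equality you would need $\sum_j\chi_j^{\mu}(a)=0$, i.e.\ that $\mu$ already behaves like a smooth volume --- precisely what you are trying to establish. This is not a cosmetic gap: for a \emph{single} Anosov diffeomorphism of a torus the Bowen--Margulis measure of maximal entropy is typically singular with respect to Lebesgue, so no argument that uses only one element (and its inverse) can possibly succeed here.

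The cited FKS argument is genuinely a higher-rank argument: one exploits that $\mu$ is the measure of maximal entropy simultaneously for Anosov elements in \emph{every} Weyl chamber, and plays the resulting system of entropy identities and Ruelle/Ledrappier--Young inequalities (as linear functionals on $\R^k$) against one another to force $\mu$ to have Lebesgue conditionals on every coarse Lyapunov foliation, hence to be SRB for each Anosov element and its inverse, hence absolutely continuous. The smoothness of the density then follows from standard Liv\v{s}ic-type regularity for the Jacobian cocycle once absolute continuity is in hand; your final paragraph is pointed in the right direction, but only after the absolute-continuity step has been repaired.
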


In the next proposition we show that the Lyapunov exponents of $(\a, \mu)$
and $(\rho, \lambda)$ are positively proportional and that the corresponding
coarse Lyapunov foliations are mapped into each other by the conjugacy
$\phi$. From now on, instead of indexing a coarse Lyapunov foliations by a
representative of the class of positively proportional Lyapunov functionals,
we index them numerically, i.e. we write $\w^i$ instead of $\w^{\chi}$,
 implicitly identifying the finite collection of equivalence classes of Lyapunov
exponents with a finite set of integers.

\begin{proposition} \label{properties}
 Assume there is an Anosov element in every  Weyl chamber. Then

\begin{enumerate}

\item The Lyapunov exponents of $(\ta, \tm)$ and $(\tr, \tl)$ are positively
proportional, and thus the Lyapunov hyperplanes and Weyl chambers are
the same.

\item For any coarse Lyapunov foliation $\w^i_\ta$ of $\ta$
$$
\tp (\w^i_\ta ) = \w^i_\tr ,
$$
where $\w^i_\ta$ is the corresponding coarse Lyapunov foliation for $\tr$.

\end{enumerate}

\end{proposition}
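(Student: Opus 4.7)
The idea is to use the Franks--Manning conjugacy $\tp$ to transport the coarse Lyapunov structure of $\a$ onto $\tr$. For every Anosov element $a$ of $\a$, the affine diffeomorphism $\tr(a)=\tp\,\a(a)\,\tp\inv$ is itself Anosov: $\a(a)$ is expansive, so by topological conjugacy $\tr(a)$ is expansive, and an expansive affine automorphism of a nilmanifold must have no unit-modulus eigenvalue on its linear part (such a direction would give an isometric, non-expansive component), hence is Anosov. Once this is known, bi-H\"older continuity of $\tp$ and $\tp\inv$ with some exponent $\g$ yields the identity $\tp(\ws_{\a(a)}(x))=\ws_{\tr(a)}(\tp(x))$: if $y\in \ws_{\a(a)}(x)$ then $\d(\a(a)^n x,\a(a)^n y)$ decays exponentially, and H\"olderness of $\tp$ transports this decay to orbits under $\tr(a)$, giving one inclusion; the reverse uses $\tp\inv$ symmetrically.

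Using this stable-manifold identification I next prove coincidence of Weyl chambers. Topological invariance of dimension applied to the bi-H\"older homeomorphism between stable leaves gives $\dim\ws_{\a(a)} = \dim \ws_{\tr(a)}$ for every Anosov $a$. By hypothesis, each $\tr$-Weyl chamber contains an $\a$-Anosov element $a$, for which no $\a$-Lyapunov functional vanishes, so the connected $\tr$-Weyl chamber lies inside a single $\a$-Weyl chamber. For the reverse, if some $\a$-Weyl chamber contained two distinct $\tr$-Weyl chambers $W_1, W_2$ with Anosov $a_j \in W_j$, then $\dim \ws_{\tr(a_1)}\neq \dim \ws_{\tr(a_2)}$ (the signs of the $\tr$-functionals differ) while $\dim\ws_{\a(a_1)}=\dim\ws_{\a(a_2)}$ ($a_1, a_2$ share an $\a$-Weyl chamber), contradicting the dimension equality. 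Hence the two Weyl chamber structures, and therefore their sets of Lyapunov hyperplanes, agree.

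For Parts 1 and 2, fix a coarse Lyapunov class $[\chi]$ of $\a$ with foliation $\w^i_\a$. By Proposition \ref{CoarseLyapunov} and the stable-manifold identity,
\[
\tp(\w^i_\a) \;=\; \bigcap_{a\in\A,\;\chi(a)<0}\ws_{\tr(a)},
\]
which, $\tr$ being algebraic, is the smooth foliation tangent to $\bigoplus_{\chi'\in T}E_{\chi'}$, where $T$ collects the $\tr$-Lyapunov functionals $\chi'$ satisfying $\chi'(a)<0$ for every $\a$-Anosov $a$ in $H^-=\{\chi<0\}$. Since Weyl chambers coincide, any such $\chi'$ has constant sign on each Weyl chamber inside $H^-$, and that sign must be negative; Weyl chambers are open and dense in $H^-$, so continuity forces $\chi'\leq 0$ on all of $H^-$. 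A short boundary argument---an open half-space contained in a closed half-space forces equal kernels with matching orientation---then yields $\chi' = c\chi$ with $c>0$. Hence every $\chi'\in T$ is positively proportional to $\chi$; since $\w^i_\a$ is nontrivial, $T$ is nonempty and must coincide with the full $\tr$-coarse Lyapunov class positively proportional to $[\chi]$, giving $\tp(\w^i_\a)=\w^i_\tr$ and positive proportionality of Lyapunov exponents.

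The main obstacle I expect is the half-space argument in the last paragraph: it requires carefully combining the density of Weyl chambers inside $H^-$ with continuity and linearity of $\chi'$ to pin down both the hyperplane and its orientation simultaneously. The preliminary step that $\tr(a)$ is Anosov whenever $\a(a)$ is, via the expansive/Anosov equivalence for affine nilmanifold automorphisms, is standard but worth isolating.
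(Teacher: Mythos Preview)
Your overall architecture matches the paper's: establish $\tp(\ws_{\a(a)})=\ws_{\tr(a)}$ via bi-H\"older, use this to show the oriented Lyapunov hyperplanes coincide, then read off (2) from Proposition~\ref{CoarseLyapunov}. The final paragraph (deducing that every $\chi'\in T$ is positively proportional to $\chi$ once the Weyl chambers agree) is fine. The problem is the middle step, where you argue that the Weyl chambers of $\a$ and $\tr$ coincide.

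Both directions of that argument have gaps. In the first, from ``$W$ contains one $\a$-Anosov element $a$'' you cannot conclude ``$W$ lies in a single $\a$-Weyl chamber'': nothing prevents the open set $W$ from meeting an $\a$-Lyapunov hyperplane away from $a$. In the second, the dimension comparison $\dim\ws_{\tr(a_1)}\neq\dim\ws_{\tr(a_2)}$ is false in general: two distinct $\tr$-Weyl chambers can have equal stable dimension, for instance when the separating wall carries a pair of oppositely oriented coarse Lyapunov classes of the same dimension (as in symplectic examples). So the contradiction you seek need not materialize. The paper avoids both issues by comparing \emph{inclusions} of stable manifolds rather than dimensions, and by working with the (measurable) stable manifolds $W^-_{\a(a)}$ for \emph{arbitrary} $a\in\Zk$, not only Anosov ones: the identity $\tp(W^-_{\a(a)}(x))=W^-_{\tr(a)}(\tp(x))$ still follows from bi-H\"older and the exponential-decay characterization. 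Then if an oriented hyperplane $L$ were a wall for $\a$ but not for $\tr$, one picks $a\in L^+$, $b\in L^-$ separated by no other wall of either action and gets $W^-_{\a(a)}\subsetneq W^-_{\a(b)}$ while $W^-_{\tr(a)}\supseteq W^-_{\tr(b)}$, which is impossible for a homeomorphism. This inclusion argument is what you need in place of the dimension count.
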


{\bf Remark.} In fact, one can show that (1) holds for Lyapunov
exponents and coarse Lyapunov foliations of $(\a, \nu)$ for any
$\a$-invariant measure $\nu$ so, in particular, the Lyapunov
exponents of all $\a$-invariant measures are positively proportional
and the coarse Lyapunov splittings are consistent with the continuous
one defined in Proposition \ref{CoarseLyapunov}.

{\bf Remark.} We do not claim  at this point that the Lyapunov exponents
of $(\ta, \tm)$ and $(\tr, \tl)$ (or of different invariant measures for $\ta$)
are equal. Of course, if $\ta$ is shown to be smoothly conjugate to
$\tr$ then this is true a posteriori.

\begin{proof} The proposition is the discrete time analogue of
 \cite[Proposition 2.5]{FKS}. We include the proof for the sake of
 completeness.
 First we observe that the  conjugacy $\tp$ maps the stable
manifolds of $\ta$ to those of $\tr$. More precisely, for any $a \in \Zk$
and any for $\mu$-a.e. $x \in M$ we have
\begin{equation} \label{phimapsW}
\tp (W^-_{\ta (a)}(x)) = W^-_{\tr (a)}(\tp(x)).
 \end{equation}
Indeed, it suffices to establish this for local manifolds, which are
characterized by the exponential contraction as in \eqref{loc}.
Since $\tp$ is bi-H\"older, it preserves the  property that $\d (x_n, y_n)$
decays exponentially, which implies \eqref{phimapsW}.
In particular, for any Anosov $a \in \Zk$ and any $x \in M$
we have $\tp (W^s_{\ta (a)}(x)) = W^s_{\tr (a)}(\tp(x))$.
Hence the formula for $\w^i_\ta$ given in Proposition
\ref{CoarseLyapunov} implies (2) once we establish (1).

To establish (1)  it suffices to show that the oriented Lyapunov hyperplanes
of $(\ta, \tm)$ and $(\tr, \tl)$ are the same. Suppose that an oriented Lyapunov
hyperplane $L$ of one action, say $\ta$, is not an oriented Lyapunov hyperplane of the other action $\tr$.  Then we can take $\Zk$ elements
$a \in L^+$ and $b \in L^-$ which are
not separated by any Lyapunov hyperplane of either action other than $L$.
Then, $E^-_{\ta (b)} = E^-_{\ta (a)} \oplus E$, where $E$ is the
coarse Lyapunov distribution of $\ta$ corresponding to $L$.
Similarly, since we assumed that $L^+$ is  not a positive Lyapunov
 half-space for $\tr$, we have  $E^-_{\tr (b)} \subseteq E^-_{\tr (a)}$.
 We conclude that

$$
W^-_{\ta (a)}  \subsetneq W^-_{\ta (b)} \quad \text{ but } \quad
W^-_{\tr (a)}  \supseteq W^-_{\tr (b)},
$$
which contradicts \eqref{phimapsW} since $\tp$ is a homeomorphism.
\end{proof}


\subsection{Uniform estimates for elements near Lyapunov hyperplane}
\label{uniform estimates}

The uniform estimates proved in this section will play a crucial role in the proof of the main theorem.  They give us upper bounds with small exponents for the expansion in certain directions for elements close to the Weyl chamber walls. This almost isometric behavior together with strong hyperbolic behavior in other directions and exponential mixing will force the convergence of suitable series as distributions.

We first address estimates for the first derivatives of these elements.
We fix a positive Lyapunov half-space $L^+\subset \rk$ and the corresponding Lyapunov hyperplane $L$. We denote the corresponding
coarse Lyapunov distributions for $\ta$ and $\tr$ by $E$ and $\be$
respectively. Recall that $\gamma >0$ denotes a H\"older exponent
of $\tp$ and $\tp ^{-1}$.

\begin{lemma} \label{upper est}
For a given coarse Lyapunov distribution $E$ of $\a$ there exist
linear functionals $\chi_m$ and $\chi_M$ on $\rk$ positive on the
Lyapunov half-space $L^+$ corresponding to $E$ such that for
any invariant ergodic measure $\nu$ of
$\ta (b)$ we have
$$
\chi_m (b) \le \chi_\nu (b) \le \chi_M (b) \qquad \forall  \; b \in L^+ \cap \Zk
$$
where $\chi_\nu (b)$ is any Lyapunov exponent of $(\ta (b), \nu)$ corresponding to the distribution $E$. Equivalently, we have
$\chi_M (c) \le \chi_\nu (c) \le \chi_m (c) $ for all  $\; c \in L^- \cap \Zk$
\end{lemma}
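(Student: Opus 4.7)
The plan is to use the bi-H\"older Franks--Manning conjugacy $\phi$ to transfer the explicit Lyapunov exponent bounds available for the affine linearization $\rho$ on $\be$ to bounds for any ergodic $\alpha(b)$-invariant measure. Since $\rho$ is affine and $\be$ is coarse Lyapunov, the Lyapunov exponents of $\rho|_{\be}$ are all positive multiples of a single linear functional $\chi$ positive on $L^+$; write them as $r_1\chi,\ldots,r_s\chi$ with $0<r_1\le\cdots\le r_s$. Elementary linear algebra yields uniform two-sided bounds
\begin{equation*}
C^{-1}e^{n r_1 \chi(b)}\|\bar v\|\;\le\;\|\rho(b)^n\bar v\|\;\le\;Ce^{n r_s \chi(b)}\|\bar v\|,\qquad b\in L^+,\ \bar v\in\be.
\end{equation*}
For $c\in L^-\cap\Zk$ and $y$ close to $x$ on the local leaf $\w_\alpha^\chi(x)$, Proposition \ref{properties}(2) gives $\phi(y)\in\w_\rho^\chi(\phi(x))$. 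Combining this linear estimate with the H\"older bounds for $\phi$ and $\phi^{-1}$ (both with exponent $\gamma$) yields
\begin{equation*}
C_1\,e^{n r_s\chi(c)/\gamma}\,d(x,y)^{1/\gamma^2}\;\le\;d(\alpha(c)^n x,\alpha(c)^n y)\;\le\;C_2\,e^{n\gamma r_1\chi(c)}\,d(x,y)^{\gamma^2}.
\end{equation*}

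Now fix $b\in L^+\cap\Zk$ and an ergodic $\alpha(b)$-invariant measure $\nu$; let $\chi_\nu$ be a Lyapunov exponent of $(\alpha(b),\nu)$ corresponding to $E$, realized at $\nu$-a.e. $x$ by a nonzero vector $v\in E(x)$ in the associated Lyapunov subspace. Since $E$ is tangent to the smooth foliation $\w_\alpha^\chi$, we may pick a smooth curve $y(t)$ in the leaf $\w_\alpha^\chi(x)$ with $y(0)=x$ and $\dot y(0)=v$. Applied to $\alpha(c)=\alpha(b)^{-1}$, which also preserves $\nu$, standard Pesin theory yields
\begin{equation*}
\lim_{n\to\infty}\frac1n\log d(\alpha(c)^n x,\alpha(c)^n y(t))\;=\;-\chi_\nu
\end{equation*}
for $\nu$-a.e. $x$ and sufficiently small $t$. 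Comparison with the preceding display gives $r_s\chi(c)/\gamma\le-\chi_\nu\le\gamma r_1\chi(c)$, equivalently $\gamma r_1\chi(b)\le\chi_\nu\le(r_s/\gamma)\chi(b)$. Setting $\chi_m:=\gamma r_1\chi$ and $\chi_M:=(r_s/\gamma)\chi$, both linear functionals on $\Rk$ positive on $L^+$, gives the required bounds; the equivalent statement on $L^-\cap\Zk$ follows by negation.

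The main technical point is the Pesin-theoretic step: verifying that the distance $d(\alpha(c)^n x,\alpha(c)^n y(t))$ along the curve $y(t)\subset \w_\alpha^\chi(x)$ has asymptotic decay rate exactly $-\chi_\nu$, rather than some other exponent. This rests on the Oseledec--Pesin fact that $\|D\alpha(c)^n v\|$ grows at rate $-\chi_\nu$ for $\nu$-a.e. $x$, combined with a Taylor expansion $\alpha(c)^n y(t)=\alpha(c)^n x+t\,D\alpha(c)^n v+O(t^2\|D^2\alpha(c)^n\|)$ to compare $d(\alpha(c)^n x,\alpha(c)^n y(t))$ with $t\|D\alpha(c)^n v\|$; passing to a Pesin block of large $\nu$-measure and working in the Lyapunov metric makes the comparison uniform for $t$ sufficiently small. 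Once this bridge is in place, the distance estimates of the first two paragraphs deliver the lemma.
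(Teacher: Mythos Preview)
Your overall strategy---transferring the uniform contraction bounds on $\be$ through the bi-H\"older conjugacy $\phi$ and the fact that $\phi$ carries $\w_\a$-leaves to $\w_\rho$-leaves---and the choice $\chi_m=\gamma r_1\chi$, $\chi_M=(r_s/\gamma)\chi$ are exactly the paper's. The gap is in the Pesin step. The asserted equality $\lim_n n^{-1}\log d(\alpha(c)^nx,\alpha(c)^ny(t))=-\chi_\nu$ for a curve $y(t)$ merely \emph{tangent} to $v\in E_{\chi_\nu}(x)$ is false in general, and the Taylor argument does not establish it. The second-order term of $t\mapsto\alpha(c)^ny(t)$ is $D^2\alpha(c)^n(v,v)+D\alpha(c)^n(\ddot y(0))$; any component of $\ddot y(0)\in E(x)$ lying in a slower Lyapunov subspace $E_{\chi'}\subset E$ with $\chi'<\chi_\nu$ contributes a term of size $\sim t^2e^{-n\chi'}$, which dominates $t\,e^{-n\chi_\nu}$ for every fixed $t$ as $n\to\infty$. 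Passing to Lyapunov charts or Pesin blocks does not help, since the obstruction is the curve's second-order contact with the slow directions, not nonuniformity of the exponents. For a generic such curve the contraction rate is therefore only $\le\chi_\nu$.

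This one-sided bound, combined with your displayed upper estimate, still yields $\chi_m\le\chi_\nu$: the display forces $y(t)$ into the Pesin strong stable manifold of rate $\ge\chi_m-\varepsilon$, whose tangent space at $x$ would then contain $v=\dot y(0)$, impossible if $\chi_\nu<\chi_m-\varepsilon$. But $\chi_\nu\le\chi_M$ does not follow, because for that you need a point $y\in W(x)$ with contraction rate under $\alpha(-b)$ at \emph{least} $\chi_\nu$. The paper handles the two inequalities asymmetrically for precisely this reason: assuming $\chi_\nu>\chi_M$, it invokes the Pesin \emph{strong unstable} manifold $W'(x)$ of $\alpha(b)$ tangent to $\bigoplus_{\chi>\chi_M+\varepsilon}E_\chi$, and takes $y\in W'(x)\cap W(x)$. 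For such $y$ the contraction under $\alpha(-b)$ is genuinely faster than $\chi_M$ by the strong-stable characterization, and the lower distance bound through $\phi$ gives the contradiction. Replacing your arbitrary tangent curve by one inside $W'(x)\cap W(x)$ repairs the argument.
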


\begin{proof}
The Lyapunov exponents of $\rho$ corresponding to $\be$
are functionals positive on $L^+$. Let $\bar \chi_m$ and $\bar \chi_M$
be the ones smallest and the largest on $L^+$. We will show that
$\chi_m = \gamma \bar \chi_m$ and $\chi_M = \gamma^{-1} \bar \chi_M$
satisfy the conclusion of the lemma.

First we will prove the second inequality, which is slightly easier.
Suppose that $\chi_\nu (b) > \chi_M(b)$ for some
Lyapunov exponent of $(\ta (b), \nu)$ corresponding to the distribution $E$.
Let $E'$ be the distribution spanned by the Lyapunov subspaces of
$(\ta (b), \nu)$ corresponding to Lyapunov exponents greater than
$\chi_M(b)+\e$. Then, for some $\e >0$, $E'$ has nonzero intersection
with the distribution $E$. The strong unstable distribution $E'(x)$
is tangent for $\nu$-a.e. $x$ to the corresponding local strong unstable
manifold $W'(x)$. Hence the intersection $F(x)$ of $W'(x)$ with the leaf
$W(x)$ of the coarse Lyapunov foliation corresponding to $E$ is a
submanifold of positive dimension. We take $y \in F(x)$ and denote
$y_n=\ta(-nb)(y)$ and $x_n=\ta(-nb)(x)$. Then $x_n$ and $y_n$ converge
 exponentially with the rate at least $\chi_M(b)+\e$. Since the conjugacy $\tp$
 is $\gamma$ bi-H\"older it is easy to see that
$$\d (\tp (x_n),\tp (y_n)) = \d (\tr (-nb)(x),\tr (-nb)(y))$$
decreases at a rate faster than $\gamma \, \chi_M(b)$.
But this is impossible since $\tp$ maps $W(x)$ to $\bar W(\phi(x))$,
the leaf of corresponding Lyapunov foliation of $\rho$, which
is contracted by $\tr (-b)$ at a rate at most
$\bar \chi_M (b) =\gamma \, \chi_M(b)$.

The first inequality can be established similarly. Suppose that
$\chi_\nu (b) < \chi_m (b)$ for some Lyapunov exponent of $(\ta (b), \nu)$ corresponding to the distribution $E$. Let $E''\subset E$ be the Lyapunov
distribution corresponding to this exponent. We cannot assert that $E''$ is
tangent to an invariant foliation, so we consider a curve $l$ tangent to
a vector $0\ne v\in E''(x)$ for some $\nu$-typical $x$. Then the exponent
of $v$ with respect to $\a(-b)$ is $-\chi_\nu (b)$. However, since
$\phi (l) \subset \bar W(\phi(x))$, we can obtain as above
that $l$ is contracted by $\a(-b)$ at the rate at least $\chi_m(b)$.
It is easy to see that this is impossible.
\end{proof}

\begin{proposition} \label{e-slow}
Let $E$ be a coarse Lyapunov distribution and $L^+ \subset \rk$ be
the corresponding  Lyapunov half-space for $\a$.
Then for any element $b \in L^+$ any $\e>0$ there
exists $C=C(b,\e)$ such that
\begin{equation}  \label {e-est}
C^{-1} e^{(\chi_m - \e) n} \| v \| \le  \| D (\ta (nb)) v \| \le  C e^{(\chi_M +\e) n} \| v \|
\quad \text{for all } v \in E, n\in \mathbb N,
\end{equation}
where $\chi_m$ and $\chi_M$ are as in Lemma \ref{upper est}
\end{proposition}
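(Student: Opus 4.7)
My plan is to convert the measure-theoretic bounds of Lemma \ref{upper est} into the desired uniform estimates via Kingman's subadditive ergodic theorem together with the variational principle for continuous subadditive cocycles. Consider
\[
\psi_n(x) \;=\; \log \|D\a(nb)|_{E(x)}\|.
\]
Because Proposition \ref{CoarseLyapunov} extends $E$ to a continuous $\a$-invariant distribution on all of $M$, the function $\psi_n$ is continuous on the compact manifold $M$, and the chain rule yields the subadditive cocycle inequality $\psi_{n+k}(x) \le \psi_n(\a(kb)x) + \psi_k(x)$. By Fekete's lemma the limit $L(b) := \lim_n \frac{1}{n}\sup_{x\in M} \psi_n(x)$ exists.

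For the upper estimate, the goal is to show $L(b) \le \chi_M(b)$. For every $\a(b)$-invariant ergodic probability measure $\nu$, Kingman's subadditive ergodic theorem says that $\psi_n/n$ converges $\nu$-a.e.\ to the top Lyapunov exponent of $(\a(b),\nu)$ along $E$, and by Lemma \ref{upper est} this exponent is at most $\chi_M(b)$. The variational principle for continuous subadditive cocycles on a compact metric space (Schreiber, Sturman--Stark, Morris) then gives
\[
L(b) \;=\; \sup_{\nu}\, \lim_n \tfrac{1}{n}\!\int\! \psi_n\, d\nu \;\le\; \chi_M(b),
\]
where the supremum runs over $\a(b)$-invariant probability measures. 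Consequently, for every $\e>0$ there exists a constant $C=C(b,\e)$ such that $\|D\a(nb)v\| \le C\,e^{(\chi_M(b)+\e) n}\|v\|$ for every $v\in E$ and every $n\in \mathbb N$.

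For the lower estimate I apply the same reasoning to $-b\in L^-$. Since $\chi_M(-b)=-\chi_M(b)$ and $\chi_m(-b)=-\chi_m(b)$, Lemma \ref{upper est} tells me that the top Lyapunov exponent of $(\a(-b),\nu)$ along $E$ is at most $-\chi_m(b)$ for every invariant $\nu$. The uniform upper bound just obtained, applied to the iteration of $-b$ on $E$, therefore yields $\|D\a(-nb)|_{E(y)}\|\le C\,e^{(-\chi_m(b)+\e)n}$ uniformly in $y\in M$. For $v\in E(x)$, writing $v = D\a(-nb)|_{E(\a(nb)x)}\bigl(D\a(nb)v\bigr)$ inverts this inequality to produce the required lower bound $\|D\a(nb)v\| \ge C^{-1}\,e^{(\chi_m(b)-\e) n}\|v\|$.

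The principal obstacle is the variational step that upgrades Kingman's theorem, valid separately for each invariant measure, into a uniform estimate over all of $M$. Continuity of $\psi_n$---which ultimately rests on the Anosov-in-every-Weyl-chamber hypothesis through Proposition \ref{CoarseLyapunov}---is precisely what makes this step work, with compactness of $M$ and of the space of invariant probability measures supplying the remaining ingredient.
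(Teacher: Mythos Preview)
Your proof is correct and follows essentially the same route as the paper: both arguments consider the continuous subadditive cocycle $\psi_n(x)=\log\|D\a(nb)|_{E(x)}\|$, invoke the subadditive ergodic theorem together with Lemma~\ref{upper est} to bound the asymptotic growth rate for every invariant measure, and then appeal to Schreiber's variational principle for continuous subadditive cocycles (the paper cites \cite{Sch} and \cite{RH}) to upgrade this to a uniform estimate; the lower bound is obtained in both cases by running the same argument for $b^{-1}$.
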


\begin{proof}
In the proof we will abbreviate $\ta (b)$ to $b$.
Consider functions $a_n (x) = \log \| D b^n |_E(x) \|$, $n \in \mathbb N$.
Since the distribution $E$ is continuous, so are the functions $a_n$.
The sequence $a_n$ is subadditive, i.e.
$a_{n+k} (x) \le a_n (b^k(x)) + a_k (x)$.
The Subadditive and Multiplicative Ergodic Theorems imply that for every
$b$-invariant ergodic measure $\nu$ the limit
$\lim _{n\to \infty} \, {a_n(x)/n}$ exists for $\nu$-a.e.$\,x$ and equals
the largest Lyapunov exponent of $(b, \nu)$ on the distribution $E$.
The latter is at most $\chi_M(b)$ by Lemma \ref{upper est}.
Thus the exponential growth rate of $\| D b^n |_E(x) \|$ is at most
$\chi_M(b)$ for all $b$-invariant ergodic measures. Since
$\| D b^n |_E(x) \|$ is continuous, this implies the uniform exponential
growth estimate, as in the second inequality in \eqref{e-est}
(see \cite[Theorem 1]{Sch} or \cite[Proposition 3.4]{RH}).
The first inequality  in \eqref{e-est} follows similarly by
observing that the exponential growth rate of $\| D b^{-n} |_E(x) \|$
is at most $-\chi_m(b)$.
\end{proof}

\begin{Lemma}
Assume that there is an Anosov element in every Weyl chamber.
Then for any $a \in \Z^k$, $\a(a)$ is Anosov if and only if its
linearization $\rho(a)$ is Anosov.
\end{Lemma}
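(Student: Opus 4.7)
The proof divides into two implications, with Proposition~\ref{properties}(1) as the bridge: under the standing hypothesis that every Weyl chamber contains an Anosov element, the Lyapunov functionals of $(\a,\mu)$ and $(\rho,\lambda)$ are positively proportional, so the two actions share the same Lyapunov hyperplanes and Weyl chambers. In particular, $a$ is regular (lies off every Lyapunov hyperplane) for $\a$ if and only if it is regular for $\rho$.

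Assume first that $\a(a)$ is Anosov. Then every nonzero tangent vector has nonzero Lyapunov exponent under $(\a(a),\mu)$, so $\chi(a)\neq 0$ for every Lyapunov functional $\chi$ of $(\a,\mu)$. Positive proportionality transfers this to $\bar\chi(a)\neq 0$ for every Lyapunov functional $\bar\chi$ of $(\rho,\lambda)$. Since $\rho(a)$ is an affine diffeomorphism of $N/\Gamma$, its Lyapunov exponents are the logarithms of the moduli of the eigenvalues of the induced automorphism on the Lie algebra of $N$; the absence of zero Lyapunov exponents is therefore equivalent to hyperbolicity of the linear part, and hence to $\rho(a)$ being Anosov.

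Conversely, suppose $\rho(a)$ is Anosov, so $a$ is regular. Proposition~\ref{CoarseLyapunov} then supplies a continuous, $\a$-invariant splitting $TM=\bigoplus_\chi E^\chi$ into coarse Lyapunov distributions (no zero Lyapunov functional appears, since $\a$ possesses Anosov elements). Fix such a summand $E=E^\chi$ with Lyapunov half-space $L^+$. If $a\in L^+$, Proposition~\ref{e-slow} applied with $b=a$ yields $\|D\a(na)v\|\ge C^{-1} e^{(\chi_m(a)-\e)n}\|v\|$ for all $v\in E$, and since $\chi_m(a)>0$ this is uniform exponential expansion of $E$ by $\a(a)$. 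If instead $a\in L^-$, the one point that requires care is converting an estimate about $-a\in L^+$ into one about $a$: I apply Proposition~\ref{e-slow} with $b=-a$ and invert the resulting lower bound fiberwise, using $\a$-invariance of $E$ so that $D\a(na)$ restricts to an isomorphism on each fiber of $E$, to obtain the uniform upper bound $\|D\a(na)v\|\le C e^{-(\chi_m(-a)-\e)n}\|v\|$, which is uniform exponential contraction. Grouping the $E^\chi$ according to the sign of $\chi(a)$ then produces a continuous, $\a(a)$-invariant hyperbolic splitting $TM=E^s_a\oplus E^u_a$, and hence $\a(a)$ is Anosov.
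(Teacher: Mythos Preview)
Your proof is correct and follows essentially the same route as the paper's: use the identification of Lyapunov hyperplanes and Weyl chambers from Proposition~\ref{properties}(1), and then apply the uniform estimates of Proposition~\ref{e-slow} to each coarse Lyapunov distribution (with $b=a$ or $b=-a$ according to the sign of $\chi(a)$) to assemble a continuous hyperbolic splitting. Your treatment of the $a\in L^-$ case via inverting the lower bound for $-a$, using $\a$-invariance of $E$ to make the fiberwise inversion legitimate, is exactly what the paper's phrase ``applied to $a$ or $-a$'' is abbreviating.

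One small remark on the forward implication: the paper dispatches ``$\a(a)$ Anosov $\Rightarrow$ $\rho(a)$ Anosov'' as classical, meaning the general fact (Franks) that the linearization of any Anosov diffeomorphism on a nilmanifold is Anosov---this does not need the ``Anosov element in every Weyl chamber'' hypothesis. Your argument via positive proportionality of Lyapunov functionals is also correct and self-contained within the paper's framework, but it invokes Proposition~\ref{properties}(1), which does use that hypothesis; so you are proving a slightly weaker statement with heavier machinery. Not a gap, just worth noting.
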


\begin{proof}
It is classical that if $a$ is Anosov so is it's linearization. So
assume that $\rho(a)$ is Anosov. Then $a$ does not belong
to any Lyapunov hyperplane of $\rho$ and hence of $\a$.
Then Proposition \ref{e-slow} applied to $a$ or $-a$ implies
that any coarse Lyapunov distribution of $\a$ is either
uniformly contracted or uniformly expanded by $\a(a)$.
This implies that $\a(a)$ is Anosov since the coarse Lyapunov
distributions span $TM$.\end{proof}


\subsection{Higher derivatives and estimates on compositions}
\label{subsection:jets}

In this subsection, we recall a basic estimate on higher derivatives
of compositions of diffeomorphisms.  The main point is that the
exponential growth rate is entirely controlled by the first
derivative.

Let $\psi$ be a diffeomorphism of a compact manifold $M$.  Given a function
$f$ on $M$, in local coordinates we have a vector valued function $f^k$
consisting of $f$ and it's partial derivatives up to order $k$.  Using a finite collection
of charts and a subordinate partition of unity, one can define the $C^k$ norm of $f$ as
$\sup_x\|f^k(x)\|$. It is easy to check that different choices of charts and/or partition of unity give
 rise to equivalent $C^k$ norms.  We will also write $\|f(x)\|_k=\|f^k(x)\|$ for the corresponding norm at $x$. More generally, let $\mathcal F$ be a foliation
of $M$ by smooth manifolds.  Given a function $f$ which is continuous and differentiable along $\mathcal F$ we can again
locally define a vector valued function $f^{k,\mathcal F}(x)$ consisting of $f$ and it's partial derivative
to order $k$ along $\mathcal F$ and let $\|f(x)\|_{k, \mathcal F}=\|f^{k,\mathcal F}(x)\|$.  Fixing a finite
collection of foliation charts and a subordinate partition of unity, this allows us to define $C^k$ norms corresponding to only taking derivatives along $\mathcal F$, by $\|f\|_{k,\mathcal F} = \sup _{x \in M} \|f(x)\|_{k,\mathcal F}$. Once again it is easy to check that different choices of charts and/or partition of unity give rise to equivalent norms. In this setting, for a homeomorphism $\psi$ of $M$ that is smooth along $\mathcal F$
with all derivatives continuous transversely, we define
$\| \psi(x)\|_{k,\mathcal F}=\sup \| f \circ \psi(x)\|_{k, {\mathcal F}}$ where the supremum is over functions
$f$ such that $\|f(\psi(x))\|_{k,\mathcal F}=1$. We then define
$\| \psi\|_{k,\mathcal F}=\sup_{x \in M} \|\psi(x)\|_{k, {\mathcal F}}$.

\begin{lemma}
\label{lemma:compositions} Let $\psi$ be a diffeomorphism of a
manifold $M$ preserving a foliation $\mathcal F$ by smooth leaves.
Let $N_k=\|\psi\|_{k,{\mathcal F}}$.  Then there exists a polynomial $P$ depending only on $k$ and the dimension of the leaves of $\mathcal F$ such that for every $m \in \mathbb N$
\begin{equation}
\label{equation:jetssup}
\|\psi^m\|_{k,{\mathcal F}} \leq N_1^{mk}P(mN_k).
\end{equation}
\end{lemma}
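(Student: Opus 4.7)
The plan is to work locally in foliation charts and apply the multivariate Faà di Bruno formula to the composition $\psi^{m+1} = \psi \circ \psi^m$, then run a double induction on $k$ and $m$.

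First, I would fix the finite atlas of foliation charts and subordinate partition of unity used to define $\|\cdot\|_{k,\mathcal{F}}$. Since $\psi$ preserves $\mathcal{F}$, derivatives of $\psi^m$ along $\mathcal{F}$ at a point $x$ are computed by iterated application of the chain rule inside the leaves; constants coming from chart overlaps and the partition of unity are absorbed into combinatorial constants depending only on $k$ and $d := \dim \mathcal{F}$. The problem thus reduces to an estimate for compositions of smooth maps between open sets in $\R^d$.

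Next, the multivariate Faà di Bruno formula expresses $(f \circ g)^{(k)}(x)$ as a sum, indexed by set partitions $\pi$ of $\{1,\ldots,k\}$, of terms of the form
$$
c_\pi\,(D^{|\pi|}f)(g(x))\,\bigotimes_{B \in \pi}(D^{|B|}g)(x),
$$
with $c_\pi$ depending only on $\pi$ and $d$. Applying this with $f = \psi$ and $g = \psi^m$, taking suprema, and using $\|D^j\psi\|_\infty \le N_j \le N_k$ for $j \le k$ (which holds because $\|\cdot\|_{j,\mathcal{F}} \le \|\cdot\|_{k,\mathcal{F}}$), I obtain a recursion
$$
N_k^{(m+1)} \;\le\; N_1\,N_k^{(m)} \;+\; N_k \sum_{\pi:\,|\pi|\ge 2} c_\pi \prod_{B \in \pi} N_{|B|}^{(m)},
$$
where $N_j^{(m)} := \|\psi^m\|_{j,\mathcal{F}}$. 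The key structural feature is that the single-block partition contributes the term $N_1\,N_k^{(m)}$, whose coefficient is $N_1$ and not a higher power of $N_1$, while every remaining term involves only strictly lower-order derivatives $N_{|B|}^{(m)}$ with $|B| < k$.

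I then induct on $k$, assuming $N_j^{(m)} \le N_1^{mj} P_j(mN_k)$ holds for all $j < k$ with $P_j$ depending only on $j$ and $d$; the base case $k=1$ follows from the chain rule together with $N_1 \ge 1$, which is automatic from the definition of $\|\cdot\|_{1,\mathcal{F}}$. Plugging the induction hypothesis into the multi-block remainder and using $\sum_{B \in \pi}|B| = k$, that remainder is bounded by $N_1^{mk}$ times a polynomial $S_k(mN_k)$ depending only on $k$ and $d$. Setting $c_m := N_k^{(m)}/N_1^{mk}$ and dividing the recursion by $N_1^{(m+1)k}$ (using $N_1 \ge 1$ and $k \ge 1$ to discard factors $N_1^{1-k} \le 1$), the recursion collapses to
$$
c_{m+1} \;\le\; c_m + S_k(mN_k),
$$
so iterating gives $c_m \le m\,S_k(mN_k)$, which is polynomial in $mN_k$ since $N_k \ge 1$. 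This closes the induction and produces a polynomial $P_k$ depending only on $k$ and $d$.

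The main obstacle is combinatorial bookkeeping: extracting a single universal polynomial $P$ depending only on $k$ and $d$ from the Faà di Bruno sum, uniformly in $\psi$ and $m$. The decisive structural point is that the coefficient of $N_k^{(m)}$ on the right-hand side of the recursion is exactly $N_1$; without this observation, normalization by $N_1^{(m+1)k}$ would not tame the recursion and the resulting bound on $\|\psi^m\|_{k,\mathcal{F}}/N_1^{mk}$ would grow exponentially rather than polynomially in $m$.
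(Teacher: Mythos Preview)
Your argument is correct. The paper does not actually give a proof of this lemma: it simply records that the estimate ``is essentially \cite[Lemma 6.4]{FM} and a proof is contained in Appendix B of that paper,'' and points to the literature on composition estimates. Your direct proof via the multivariate Fa\`a di Bruno formula together with the double induction on $k$ and $m$ is the standard way such bounds are established, and is presumably what is carried out in the cited reference. The structural point you isolate --- that the single-block partition contributes $N_1\,N_k^{(m)}$ with coefficient exactly $N_1$, so that after dividing by $N_1^{(m+1)k}$ the recursion for $c_m = N_k^{(m)}/N_1^{mk}$ becomes additive rather than multiplicative --- is indeed the crux; without it one would only get an exponential bound in $m$ rather than a polynomial one.
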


This type of estimate is used frequently in the dynamics literature particularly
in KAM theory and is usually referred to as an {\em estimate on compositions}.
This lemma is essentially \cite[Lemma 6.4]{FM} and a proof is contained
in Appendix $B$ of that paper.  There are many other proofs of
equation \ref{equation:jetssup} in the literature, though mostly only in the case where the foliation $\mathcal F$ is trivial, i.e. when the only leaf of $\mathcal F$ is the manifold $M$.  Most proofs should adapt easily to the foliated
setting.


\section{Exponential mixing for $\Z^{k}$-actions on tori}  \label{exponential}

Consider a  diffeomorphism  $a$ on a manifold preserving a probability measure $\mu$.   Given two H\"{o}lder
functions $f,g$, we consider the matrix coefficients $\langle a^{k} f, g\rangle$ where the bracket refers to the standard inner product on $L^{2}(\mu)$.
  For an Anosov diffeomorphism $a$,  the matrix coefficients of H\"{o}lder functions decay exponentially fast in $k$  for either an invariant volume or the measure of maximal entropy, as follows easily from symbolic dynamics.  D. Lind  established exponential decay for   H\"{o}lder functions for  ergodic toral automorphisms in \cite{Lind}. This is considerably harder, as there is no suitable symbolic dynamics.  Instead he shows that dual orbits of Fourier coefficients diverge fast as   one has good lower bounds on the distances from  integer points to neutral subspaces along stable and unstable subspaces.   This precisely is Katznelson's lemma  on rational approximation of invariant subspaces.  We adapt Lind's argument to prove exponential decay of matrix coefficients of H\"{o}lder functions for $\Z^{k}$ actions by ergodic automorphisms with a bound depending on the norm of the element in $\Z^{k}$.  Even if the $\Z ^{k}$-action contains only Anosov elements this is not trivial since we seek a bound in terms of the norm of $a \in \Z^{k}$.  In addition, some elements in $\Z^{k}$ will be arbitrarily close to the Lyapunov hyperplanes and thus have little, if any, expansion in certain directions.  Thus one  essentially  has to deal with the partially hyperbolic case.
 We remark that Damjanovi\'{c} and Katok obtained  estimates of exponential divergence  of Fourier coefficients  for the dual action induced by a  $\Z^{k}$-action by ergodic toral automorphisms   \cite{DamKat}.

 Finally, Gorodnik and the third author generalized exponential decay of matrix coefficients to ergodic automorphisms and $\Z ^{k}$ actions of such on  nilmanifolds \cite{GorodnikSpatzier}.  We will report on this development in  more detail in Section \ref{nilmanifolds} when we prove the nilmanifold version of our main result.  The arguments required for the   nilmanifold case  are substantially more complicated, and rely on work by Green and Tao on equidistribution of polynomial sequences \cite{Green-Tao}.   For this reason, and to keep our exposition for the case of toral automorphisms self contained and elementary, we present our adaptation of Lind's arguments.

Let $\tau$ be a $\Z ^k$ -action by ergodic automorphisms of $\T^n$.
We begin  by recalling  Katznelson's Lemma.  For a proof see
\cite[Lemma 4.1]{DamKat}.

\begin{lemma} Let $A$ be an $N\times N$ matrix with integer
coefficients. Suppose that $\R ^{N}$ splits as $\R ^{N}= V \oplus V' $ with  $V$ and $ V'$ invariant
under $A$ and such that $A\mid_{V}$ and $A\mid_{V'}$  do not have  common eigenvalues. If $V \cap \Z ^{N}=\{0\}$,
then there exists a constant $C$ such that
$$d(z,V)\geq C \| z \| ^{-N}$$
 for all $z \in \Z ^{N} $. Here $\| z \|$  denotes the  Euclidean norm and d the Euclidean distance.
\end{lemma}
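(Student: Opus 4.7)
The plan is to establish the bound via a Wronskian-type determinant argument inside the $A$-cyclic subspace generated by $z$. Fix a nonzero $z \in \Z^N$ and let $U = \operatorname{span}\{A^k z : k \geq 0\}$, with $m := \dim U \leq N$; by construction, $z$ is $A|_U$-cyclic. Two preliminary observations drive the argument. First, $U \cap \Z^N$ is a primitive rank-$m$ sublattice of $\Z^N$ (it is the intersection of $\Z^N$ with the rational subspace $U$ spanned by the integer vectors $A^k z$), so its covolume is at least $1$ since the Gram matrix of any integer basis has a positive integer determinant. Second, because $A|_V$ and $A|_{V'}$ have disjoint spectra, the generalized eigenspace decomposition of $A|_U$ respects the splitting $\R^N = V \oplus V'$, yielding $U = (V \cap U) \oplus (V' \cap U)$. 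In particular, the decomposition $z = v + v'$ with $v \in V$, $v' \in V'$ places both $v$ and $v'$ already inside $U$.

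Consider the wedge product
$$\omega := z \wedge A z \wedge \cdots \wedge A^{m-1} z \in \bigwedge\nolimits^m U.$$
By cyclicity $\omega \neq 0$, and since each $A^k z$ lies in $U \cap \Z^N$, the coordinates of $\omega$ in an integer basis of $U \cap \Z^N$ are integers not all zero. Therefore $\|\omega\| \geq \mathrm{covol}(U \cap \Z^N) \geq 1$. Expanding via $A^k z = A^k v + A^k v'$ by multilinearity, a term can contribute only if it contains exactly $\dim(V \cap U)$ many $v$-factors and $\dim(V' \cap U)$ many $v'$-factors, since otherwise a wedge of too many vectors in $V \cap U$ or $V' \cap U$ vanishes. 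Applying Hadamard's inequality to each surviving term, along with the elementary bounds $\|v\| \leq C \|z\|$ and $\|v'\| \leq C \, d(z, V)$ (with $C$ depending only on the fixed angle between $V$ and $V'$), gives
$$1 \leq \|\omega\| \leq C' \, \|z\|^{\dim(V \cap U)} \, d(z, V)^{\dim(V' \cap U)},$$
where $C'$ depends only on $\|A\|$, $N$, and the angle between $V$ and $V'$.

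Provided $\dim(V' \cap U) \geq 1$, rearranging yields $d(z, V) \geq C''\|z\|^{-N}$, since the exponent $\dim(V \cap U)/\dim(V' \cap U) \leq m \leq N$ and $\|z\| \geq 1$ for $z \in \Z^N \setminus \{0\}$. The alternative $\dim(V' \cap U) = 0$ forces $U \subset V$ and hence $z \in V \cap \Z^N = \{0\}$, contradicting $z \neq 0$. The main subtlety I anticipate is verifying that $C'$ is uniform over the (possibly infinitely many) cyclic subspaces $U$; this reduces to the uniform estimates $\|A|_U\| \leq \|A\|$ and a uniform lower bound on the angle between $V \cap U$ and $V' \cap U$ (bounded below by the angle between $V$ and $V'$), both of which are automatic. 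The genuine Diophantine input is packaged in the single estimate $\mathrm{covol}(U \cap \Z^N) \geq 1$, which is the substitute for the usual ``a nonzero rational integer has absolute value at least $1$'' step.
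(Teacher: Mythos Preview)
Your argument is correct. The paper does not prove this lemma; it simply recalls it and refers the reader to \cite[Lemma 4.1]{DamKat} for a proof, so there is no ``paper's own proof'' to match against beyond that citation.

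For what it is worth, the standard proofs of Katznelson's lemma (as in \cite{DamKat} or Katznelson's original paper) also hinge on the $A$-orbit $z, Az, \ldots$ of an integer vector and a volume/determinant estimate, so your approach is very much in the same spirit. Your packaging via the cyclic subspace $U$ and the wedge $\omega = z \wedge Az \wedge \cdots \wedge A^{m-1}z$ is clean: the spectral-disjointness hypothesis is used exactly once, to get $U = (U\cap V)\oplus(U\cap V')$, and the Diophantine input is isolated in the single inequality $\|\omega\|\ge \mathrm{covol}(U\cap\Z^N)\ge 1$. Two small remarks worth recording explicitly if you write this up: (i) the constant you obtain, $(C')^{-1/q}$ with $q=\dim(V'\cap U)\in\{1,\dots,N\}$, is indeed uniform in $z$ because $q$ ranges over a finite set, so one may take the minimum; (ii) your exponent $p/q\le m-1\le N-1$ actually yields the slightly sharper bound $d(z,V)\ge C\|z\|^{-(N-1)}$, which of course implies the stated $\|z\|^{-N}$.
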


Consider the finest decomposition into $\tau (\Z ^k)$-invariant subspaces $E_i$ of $\R ^n = \oplus_i  E_i $.  All $E_i$ are subspaces of
generalized eigenspaces of the elements of $\tau (\Z ^k)$. Let $\lambda _i$ denote the Lyapunov exponent defined by the vectors in $E_i$.  Then
$e ^{\lambda _i (a)}$ is the absolute value of the eigenvalue of $\tau (a)$ on $E_i$.   It is well-known that the $\lambda _i (a)$ are the Lyapunov exponents of $\tau (a)$.  Pick an inner product with respect to which   the $E_{j}$ are mutually orthogonal.  Let  $|\|v|\|$ denote its norm.  Since all norms on $\R^{n}$ are equivalent, we can pick  $D >0$ such that $\frac{1}{D} \| v\|  \leq |\|v|\|  \leq D \|v\|.$	Finally note that for any $a \in \Z^{k}$,  $\tau (a) $ expands $v \in E_{j}$ by at least    $e^{\lambda _{j}}(a)$.

\begin{lemma} If $\tau (a)$ is an ergodic toral automorphism, then for some $i$, $\lambda _i (a) \neq 0$.
\end{lemma}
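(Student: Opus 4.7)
The plan is to proceed by contrapositive: assume that $\lambda_i(a) = 0$ for every $i$, and deduce that $\tau(a)$ cannot be ergodic. Since $\tau(a)$ is a toral automorphism, it is represented by a matrix $A \in GL(n,\Z)$, and the numbers $e^{\lambda_i(a)}$ are precisely the moduli of the (complex) eigenvalues of $A$. The hypothesis $\lambda_i(a) = 0$ for all $i$ therefore means every eigenvalue of $A$ lies on the unit circle.

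The heart of the argument is Kronecker's theorem: an algebraic integer all of whose Galois conjugates have absolute value $1$ must be a root of unity. Since the characteristic polynomial of $A$ has integer coefficients and leading coefficient $1$, its roots are algebraic integers, and they form a Galois-stable set inside the unit circle by the standing assumption. Hence each eigenvalue of $A$ is a root of unity, and consequently there is a positive integer $m$ for which $A^m$ has $1$ as an eigenvalue.

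I would then invoke the classical ergodicity criterion for toral automorphisms: $A^m$ acts ergodically on $\T^n$ if and only if $A^m$ has no roots of unity among its eigenvalues, equivalently, no eigenvalue equal to $1$ after raising to a suitable power. Concretely, if $1$ is an eigenvalue of $A^m$, there is a nonzero rational vector $v$ with $A^m v = v$, whose image under the dual action produces a nontrivial $\tau(a)^m$-invariant character on $\T^n$, yielding a nontrivial $\tau(a)^m$-invariant function in $L^2(\T^n)$. Non-ergodicity of $\tau(a)^m$ contradicts the ergodicity of $\tau(a)$, completing the contrapositive.

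I do not expect a serious obstacle here; the proof is essentially a one-line invocation of Kronecker's theorem combined with the standard Fourier-analytic characterization of ergodic toral automorphisms. The only mild point worth noting is that one should verify that ergodicity of $\tau(a)$ implies ergodicity of $\tau(a)^m$ for all $m$, or rather work directly with the fact that an ergodic toral automorphism has no eigenvalues that are roots of unity (so the assumption $\lambda_i(a) = 0$ for all $i$ is incompatible with ergodicity even without passing to a power).
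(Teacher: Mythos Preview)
Your proof is correct and is precisely the argument the paper itself mentions first: it notes that the lemma ``follows immediately from Kronecker's theorem that the eigenvalues of an integer matrix are roots of unity if they all lie on the unit circle,'' combined with the standard criterion that an ergodic toral automorphism has no eigenvalue that is a root of unity.

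However, the paper then elects to give a different, self-contained proof that avoids Kronecker's theorem. It takes the Jordan decomposition $\tau(a) = bc$ with $b$ semisimple and $c$ unipotent, observes that $\lambda_i(a) = \lambda_i(b)$, and argues that if all $\lambda_i(a) = 0$ then $b$ lies in a compact group. Ergodicity forces no power of $b$ to be the identity, so powers of $b$ approximate $1$ arbitrarily closely, whence $\mathrm{tr}\,\tau(a)^l = \mathrm{tr}\,b^l$ comes arbitrarily close to $n$; integrality of the trace then forces $\mathrm{tr}\,\tau(a)^l = n$, which is impossible since the eigenvalues of $b^l$ cannot all be real. Your route via Kronecker is shorter and more conceptual; the paper's route is more hands-on and avoids importing an outside number-theoretic result, instead exploiting only integrality of traces and compactness.
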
   \label{4.2}

{\colb This follows immediately from Kronecker's theorem that  the eigenvalues of an integer matrix are roots of unity if they all lie lie on the unit circle.  However, let us give a   simple direct proof. }

\begin{proof}  Consider the Jordan decomposition $\tau (a) = bc$ of $a$ where $b$ is semisimple, $c$ unipotent and $\tau (a)$ and $b$ commute.
Then for all $i$, $\lambda _i (a) = \lambda _i (b)$.  If all $\lambda _i (a) = 0$, then $b$ lies in a compact subgroup.  Since $\tau (a)$ is ergodic, no eigenvalue of $\tau (a)$ is a root of unity, and hence no power of $b$ is $1$.  Hence powers of $b$ approximate 1 arbitrarily closely.  Hence $tr \: \tau (a) ^l = tr \: b^l $ is arbitrarily close to $n$ for suitable $l$.  Since $\tau(a)^l \in SL(n,\Z)$,
$tr \: \tau (a)^l$ is an integer, and thus $tr \: \tau (a)^l =n$.  On the other hand, however,  $tr \: \tau (a)^l <n$ since the eigenvalues of $b^l$ cannot be real.   This is the final contradiction. \end{proof}

We will need a slightly stronger variant of this lemma. For $a \in \Z^k$, set $S(a) = \max _i  \lambda _i (\tau(a)) $.  Then $S(a) \neq 0$ for $\tau (a
)$ ergodic.

\begin{lemma} \label{maxexp}
Suppose all $0 \neq a \in \Z^k$, $\tau (a)$ acts ergodically.   Then $\inf \{S(a)\mid 0 \neq a \in \Z^k\} >0$.
\end{lemma}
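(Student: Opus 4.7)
My plan is to argue by contradiction. Suppose a sequence $a_n\in\Z^k\setminus\{0\}$ satisfies $S(a_n)\to 0$. The first step is to upgrade this to $\lambda_i(a_n)\to 0$ for every index $i$. This uses the sum-to-zero identity $\sum_i d_i\lambda_i(a_n)=\log|\det\tau(a_n)|=0$, valid because $\tau(a_n)\in GL(n,\Z)$, where $d_i=\dim E_i$. Each $\lambda_i(a_n)\le S(a_n)\to 0$, so the positive part $\sum_{\lambda_i(a_n)>0} d_i\lambda_i(a_n)$ tends to zero; by the identity the negative part does as well, hence $\lambda_i(a_n)\to 0$ for every $i$.

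Next, I translate to the spectral side. The eigenvalue $\mu_i(\tau(a_n))$ corresponding to $E_i$ satisfies $|\mu_i(\tau(a_n))|=e^{\lambda_i(a_n)}\to 1$, so every eigenvalue of $\tau(a_n)$ has modulus bounded (by $2$, say) for $n$ large. The characteristic polynomial of $\tau(a_n)$ is monic with integer coefficients, and these coefficients are, up to sign, the elementary symmetric polynomials in the eigenvalues, hence uniformly bounded. A bounded subset of $\Z$ is finite, so only finitely many characteristic polynomials can occur; passing to a subsequence I may assume the characteristic polynomial of $\tau(a_n)$ is a fixed $p(x)\in\Z[x]$ for all $n$.

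On this subsequence, for each fixed $i$ the eigenvalue $\mu_i(\tau(a_n))$ is a root of $p$, hence lies in a fixed finite subset of $\C$. The real sequence $|\mu_i(\tau(a_n))|$ therefore takes values in a finite subset of $[0,\infty)$ and converges to $1$, which forces $|\mu_i(\tau(a_n))|=1$ for all sufficiently large $n$. Thus every eigenvalue of $\tau(a_n)$ lies on the unit circle, and Kronecker's theorem (a monic integer polynomial with all roots on the unit circle is a product of cyclotomic factors) implies each eigenvalue is a root of unity. Consequently $\tau(a_n)$ is not ergodic, contradicting the standing hypothesis, or equivalently, the contrapositive of Lemma \ref{4.2}. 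The main obstacle is step three: one must pin down each specific $\mu_i(\tau(a_n))$ to a fixed finite set, not merely know that it is an algebraic integer of bounded modulus, of which there are infinitely many. The device of first bounding the characteristic polynomial, then extracting a subsequence where it is constant, is the essential maneuver that produces the necessary finiteness.
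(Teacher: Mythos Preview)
Your proof is correct and takes a genuinely different route from the paper's. The paper argues in two stages: first, assuming every $\tau(a)$ is semisimple, it observes that $S(a_l)\to 0$ forces the matrices $\tau(a_l)$ to be uniformly close to isometries, so the images $\tau(a_l)(z)$ of a fixed integer vector $z$ are bounded lattice points; by pigeonhole two of them coincide, giving $\tau(a_j^{-1}a_l)$ with eigenvalue $1$ and hence a non-ergodic element. For the general case the paper uses Jordan decomposition to descend to a rational invariant subspace on which the action is faithful and every element is semisimple, reducing to the first case.

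Your argument sidesteps the semisimple/non-semisimple dichotomy entirely. The key maneuver --- using $\sum d_i\lambda_i=0$ to force \emph{all} $\lambda_i(a_n)\to 0$, then bounding the integer characteristic polynomials and extracting a subsequence on which the polynomial is constant --- is cleaner and more algebraic. Once the characteristic polynomial is fixed, the eigenvalue moduli are constant along the subsequence and equal to $1$, so Kronecker's theorem finishes it directly. This is essentially the approach the paper alludes to (and bypasses) when it remarks that explicit lower bounds exist in the literature, e.g.\ Blanksby--Montgomery; your argument is the soft version of that. One minor imprecision: on each $E_i$ there may be several eigenvalues (e.g.\ a complex conjugate pair), not a single $\mu_i$, but all share the modulus $e^{\lambda_i(a_n)}$, so the argument goes through unchanged if you phrase step three for all eigenvalues of $\tau(a_n)$ rather than one per $E_i$.
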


{\colb Explicit lower bounds can be found in the literature, e.g. in \cite{BlanksbyMontgomery}.  We give an easy soft argument for a  positive lower bound.}

\begin{proof}   First suppose that all elements in $\Z ^k$ are semisimple.
If $\tau (a)$ is semisimple, then $\tau (a) $ expands each $E_{j}$ precisely by $e^{\lambda _j (a)}$ with respect to $|\|v|\|$.
Suppose $S(a_l) \rightarrow 0$ for a sequence of mutually distinct $1 \neq a_l \in \Z^k$.  Then  there are infinitely many $\tau (a_l)$ which expand distances w.r.t. $|\|\ldots|\|$ by at most $\frac{2}{D}$. Hence distances w.r.t. $\| \ldots \|$ get expanded by at most 2.
 Pick any integer vector $z \in \Z ^n$.   As  the images $a_l ( e_1)$ are integer vectors of norm at most $2 \: \| z \|$, for some $a_l \neq a_j$, $a_l ( z) =a_j ( z)$.  Hence $a_j ^{-1} a_l$ cannot be ergodic.

Next consider the general case.  Consider a generating set $a_1, \ldots, a_k$ of $\Z^k$.  Suppose $a_1 \in \Z^k$ has a Jordan
decomposition $\tau (a_1)=b_1 \: c_1$ with $b_1$ semisimple and $c_1$ unipotent.  Since $\tau (a_1) \in SL(n,\Z)$  both $b_1$ and $c_1$
are in $SL(n,\Q)$.  Since $c_1$ is unipotent, the subspace $W_1=\{ v \mid c_1 v =v \}$ of eigenvectors with eigenvalue 1 is
nontrivial and is defined over $\Q$.  Also, $W_1$ is $\tau (\Z^k)$-invariant, and $\tau (\Z^k)$ acts faithfully on $W_1$ since otherwise
some element $\tau (a) $ for  $a  \in \Z^k$ has eigenvalue 1 and is not ergodic.  Also $\tau (a)\mid _ {W_1}$ is semisimple.  Inductively, we
define a descending sequence of rational  $\tau (\Z^k)$-invariant subspaces $W_1 \supset W_2 \supset \ldots W_k$ on
 which $\Z^k$ acts faithfully. In addition, $\tau (a_i) \mid _{W_i} $ is semisimple.  Hence $\Z^k$ acts faithfully on $W_k$ and every element acts semisimply.  By the special case above, $\inf \{S(a\mid _{W_K})\mid 1 \neq a \in \Z^k\} >0$.  Since  $\inf \{S(a)\mid 0 \neq a \in \Z^k\}  \geq \inf \{S(a\mid _{W_K})\mid 0 \neq a \in \Z^k\}$, the claim follows.
\end{proof}

Note that the $\lambda _i$ and hence $S$ extend to continuous functions on $\R ^k$.

 \begin{lemma}  \label{4.4} 
Suppose for all  $0 \neq a \in \Z^k$, $\tau (a)$ acts ergodically.   Then for all $0 \neq a \in \R^k$, $S(a)>0$.  Thus $0 < \sigma := \frac12 \inf \{S(a) \mid a \in \R^k, \parallel a \parallel =1\}$.
\end{lemma}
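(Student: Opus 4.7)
Since $S = \max_i \lambda_i$ is the maximum of finitely many linear functionals on $\R^k$, it is continuous and positively homogeneous of degree one. I would first observe that because each $\tau(a)$ lies in $GL(n,\Z)$, one has $\sum_i d_i \lambda_i(a) = 0$ for $a \in \Zk$, with $d_i = \dim E_i$, and this extends by linearity to all of $\R^k$. Consequently $S(a) \ge 0$ everywhere, with equality iff $a$ lies in the common kernel $V := \bigcap_i \ker \lambda_i$. By continuity, positive homogeneity, and compactness of the unit sphere, $\sigma > 0$ is equivalent to $V = \{0\}$, so the lemma reduces to showing that the Lyapunov functionals have trivial common kernel.

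I would argue by contradiction. Assuming $V$ contains a unit vector $a_0$, for each large integer $j$ let $n_j \in \Zk$ be a nearest integer point to $j a_0$, so $\|n_j - j a_0\| \le \sqrt{k}/2$; taking $j$ in a suitably spaced sequence makes the $n_j$ pairwise distinct with $\|n_j\| \to \infty$. Since $\lambda_i(a_0) = 0$, the identity $\lambda_i(n_j) = \lambda_i(n_j - j a_0)$ provides a uniform bound on $\lambda_i(n_j)$ independent of both $i$ and $j$. Hence the eigenvalues of each integer matrix $\tau(n_j)$ have absolute values confined to a fixed interval, so the characteristic polynomials, being integer polynomials of bounded size, take only finitely many values. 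Pigeonholing on an infinite subsequence, one may assume all the characteristic polynomials agree.

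Next I would exploit that the commuting family $\tau(\Zk)$ can be simultaneously brought to upper-triangular form over $\C$ in a common basis; the diagonal entries define characters $\chi_i : \Zk \to \C^\ast$ whose absolute values are $\exp \lambda_i(\cdot)$. Equality of characteristic polynomials only says the multisets $\{\chi_i(n_j)\}_i$ coincide, but a second pigeonhole on the finitely many orderings of a multiset yields a further subsequence on which the ordered tuple $(\chi_i(n_j))_i$ is the same for every $j$. For any two indices $j \ne l$ in this sub-subsequence, $\chi_i(n_j - n_l) = 1$ for all $i$, so $\tau(n_j - n_l)$ is unipotent. A nontrivial integer unipotent matrix has $1$ as an eigenvalue and fixes a nonzero vector in $\Z^n$, hence defines a non-ergodic automorphism of $\T^n$; since $n_j - n_l \ne 0$, this contradicts the assumption that every nontrivial element of $\tau(\Zk)$ acts ergodically.

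The most delicate point is the double pigeonhole. Bounding the characters $\chi_i(n_j)$ and the characteristic polynomials is immediate, but the diagonal entries of $\tau(n_j)$ in the simultaneous upper-triangular basis match a priori only up to a permutation that can vary with $j$; without upgrading this to equality of ordered tuples one cannot conclude that $\tau(n_j - n_l)$ is unipotent. Once the second pigeonhole promotes multiset equality to ordered equality, the remaining conclusion --- unipotence of a nontrivial integer matrix and hence non-ergodicity on $\T^n$ --- closes the argument immediately.
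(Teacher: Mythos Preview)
Your argument is correct, but it takes a longer and rather different route than the paper. The paper's proof of this lemma is three lines: it simply invokes the immediately preceding Lemma~\ref{maxexp}, which already gives $\inf\{S(a)\mid 0\neq a\in\Zk\}>0$, and then observes that if $S(a)=0$ for some $0\neq a\in\R^k$ one can approximate the ray $\{ta\}$ by integer points $a_l\in\Zk$ with $a_l-t_la\to 0$; continuity of $S$ then forces $S(a_l)\to 0$, contradicting Lemma~\ref{maxexp}. Your proof bypasses Lemma~\ref{maxexp} entirely and is self-contained: you use the same Dirichlet-type approximation to produce integer points $n_j$ with bounded Lyapunov data, but then finish with a pigeonhole on integer characteristic polynomials and a second pigeonhole on orderings of eigenvalues to force two elements to differ by a unipotent. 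This is a clean alternative to the paper's proof of Lemma~\ref{maxexp} (which instead argues via collisions of orbits of an integer vector in the semisimple case and reduces to that case via invariant rational subspaces). The trade-off is that the paper separates the integer statement (Lemma~\ref{maxexp}) from the passage to $\R^k$ (the present lemma), making the latter essentially trivial, whereas you merge the two into a single argument; your character/pigeonhole approach also avoids the case split on semisimplicity that the paper needs.
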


\begin{proof}  Suppose $S(a)=0$ for some $0 \neq a \in \R^k$.   Since the line $ta, t \in \R$ comes arbitrarily close to integer points in $\Z^k$, we can find $t_l \in \R$ and $a_l \in \Z ^k$ with $a_l - t_l a \rightarrow 0$ as $l \rightarrow \infty$. As $S(t_l a)=0$ for all $l$, it follows readily that $S(a_l) \rightarrow 0$ in contradiction to the last lemma.  The last claim follows as $S$ is continuous.
\end{proof}

Let $B(d) $ denote the ball of radius $d$  in $\Z ^k$.

\begin{lemma}   \label{4.5} 
Let $1 < r < e^{\frac{\sigma}{n+2}}$.  Set $H_l = \{ z \in \Z \mid - r^l \leq z \leq r^l\} ^n$. Then we have   for all sufficiently large $l$ and $a \in \Z^k$ with $\|a\| \geq l$
\[  \tau (a) (H_l) \cap  H_l = \{0\}.\]
\end{lemma}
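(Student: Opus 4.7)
The plan is to argue by contradiction: assume $\tau(a)(z_1) = z_2$ with $z_1, z_2 \in H_l \setminus \{0\}$ and $\|a\| \geq l$, and derive incompatible upper and lower bounds on the component of $z_1$ along the maximally expanded generalized eigenspace of $\tau(a)$. By Lemma \ref{4.4} applied to $a/\|a\|$ and homogeneity of the Lyapunov exponents, $S(a) \geq 2\sigma\|a\| \geq 2\sigma l$. Let $V'$ be the sum of the generalized eigenspaces of $\tau(a)$ whose eigenvalues have absolute value $e^{S(a)}$, and let $V$ be the direct sum of the remaining generalized eigenspaces. Then $\R^n = V \oplus V'$ is a $\tau(a)$-invariant splitting with no common eigenvalues between the two blocks.

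The main step is to apply Katznelson's lemma to this splitting, which requires checking $V \cap \Z^n = \{0\}$. Here I would use that, since $\Zk$ is abelian, every $\tau(b)$ commutes with $\tau(a)$ and therefore preserves each generalized eigenspace of $\tau(a)$; hence $V$ is actually $\tau(\Zk)$-invariant. When the $\tau(\Zk)$-action on $\Q^n$ is $\Q$-irreducible, any nonzero $v \in V \cap \Z^n$ would $\tau(\Zk)$-generate a nonzero rational $\tau(\Zk)$-invariant subspace inside the proper subspace $V$, contradicting irreducibility. In the reducible case, I would decompose $\Q^n = \bigoplus_k U_k$ into $\Q$-irreducible $\tau(\Zk)$-invariant pieces, correspondingly split $z_1, z_2$ up to a bounded common denominator, and run the argument inside any component $U_k$ on which $z_1^{(k)} \neq 0$; the restricted action on $U_k$ is still ergodic on every nonzero element, so an analog of Lemma \ref{4.4} inside $U_k$ yields a positive lower bound on the maximum exponent linear in $\|a\|$.

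Granted $V \cap \Z^n = \{0\}$, Katznelson delivers $d(z_1, V) \geq C \|z_1\|^{-n} \geq C' r^{-nl}$, and since the splitting $(V, V')$ takes only finitely many values as $a$ varies (it is a grouping of the $E_i$ according to which $\lambda_i(a)$ equal $S(a)$), the angles are uniformly bounded below and the oblique projection satisfies $\|p_{V'}(z_1)\| \geq C'' r^{-nl}$. Since $p_{V'}$ commutes with $\tau(a)$, one has $p_{V'}(z_2) = \tau(a)\, p_{V'}(z_1)$. On $V'$ all eigenvalues of $\tau(a)$ have absolute value $e^{S(a)}$, and a standard Jordan block estimate (with block sizes bounded by $n$) yields a polynomial $P$ depending only on $n$ with
\[
 \|p_{V'}(z_2)\| \;=\; \|\tau(a)\, p_{V'}(z_1)\| \;\geq\; \frac{e^{S(a)}}{P(\|a\|)}\, \|p_{V'}(z_1)\| \;\geq\; \frac{C'' e^{2\sigma l}}{P(l)\, r^{nl}}.
\]
Combining with the easy upper bound $\|p_{V'}(z_2)\| \leq K\|z_2\| \leq K\sqrt{n}\, r^l$ produces $e^{2\sigma l} \leq K' P(l)\, r^{(n+1)l}$, that is, $2\sigma \leq (n+1)\log r + O(\log l / l)$. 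Since the hypothesis $\log r < \sigma/(n+2)$ forces $(n+1)\log r < \sigma$, this fails for all sufficiently large $l$, yielding the desired contradiction.

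The principal obstacle is the verification that $V \cap \Z^n = \{0\}$: this is where the commutativity of the $\Zk$-action and the reduction to $\Q$-irreducible pieces are essential, and where the hypothesis that every nonzero element acts ergodically is genuinely used. A secondary technical issue is producing Jordan block and angle bounds that are uniform in $a$, which uses only that the two data $(V, V')$ and the Jordan block sizes inside $V'$ range over finite sets.
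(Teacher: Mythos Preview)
Your argument follows the same contradiction-via-Katznelson scheme as the paper, but the execution differs. The paper argues by extracting a subsequence with $a_{l_m}/\|a_{l_m}\|\to a\in\R^k$, fixes a \emph{single} index $i$ with $\lambda_i(a)\ge\sigma$, and then works throughout with the fixed splitting $E_i\oplus\bigoplus_{j\ne i}E_j$; the Jordan-form analysis is carried out for this one limit direction $a$, which supplies the needed uniformity. You instead argue directly for each $a$, using that the splittings $(V,V')$ (grouped by which $\lambda_i(a)$ equal $S(a)$) range over a finite set; your uniform lower bound on $\|\tau(a)|_{V'}\,w\|$ comes from the simultaneous Jordan--Chevalley decomposition of the commuting family (this is the correct justification---``block sizes $\le n$'' alone does not give a polynomial in $\|a\|$ independent of $a$). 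Either route is viable in the $\Q$-irreducible case.

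There is, however, a genuine gap in the $\Q$-reducible case. Your hypothesis $V\cap\Z^n=\{0\}$ can fail: if $\Q^n=U_1\oplus U_2$ is a nontrivial rational $\tau(\Z^k)$-splitting and $S(a)$ is realized only by Lyapunov exponents coming from $U_1$, then $V\supset U_2\otimes\R$, so $V\cap\Z^n\supset U_2\cap\Z^n\neq\{0\}$. Your proposed fix---project $z_1,z_2$ to a $\Q$-irreducible piece $U_k$ with $z_1^{(k)}\ne 0$ and rerun the argument there---does produce a contradiction, but the exponent bound you obtain inside $U_k$ is $S_k(a)\ge 2\sigma_k\|a\|$ with $\sigma_k=\tfrac12\inf_{\|b\|=1}S_k(b)$. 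Since $S_k\le S$ pointwise one only has $\sigma_k\le\sigma$, and $\sigma_k$ can be much smaller than $\sigma$ (e.g.\ when some other factor $U_j$ dominates $S$ in the direction where $S_k$ is small). Thus your argument yields the conclusion only for $r$ below a threshold governed by $\min_k\sigma_k$, not for the full range $r<e^{\sigma/(n+2)}$ asserted in the lemma. A smaller slip: in the final display you replace $P(\|a\|)$ by $P(l)$, which goes the wrong way since $\|a\|$ may be much larger than $l$; you should keep $\|a\|$ throughout and use that $e^{(2\sigma-(n+1)\log r)\|a\|}$ eventually dominates any polynomial in $\|a\|$.
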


\begin{proof}  Fix a  constant $b >0$ such that for all $r>0$, $ [-r,r] ^n $ is contained in the ball $  B_{b r} (0)$ of radius $b r$ about 0.

Suppose that there is a sequence $l_m \rightarrow \infty$ and $a_{l_m} \in \Z^k$ with $\alpha _{l_{m}} := \| a_{l_m}  \| \geq l_m$  such that  $ \tau (a_{l_m})(H_{l_m}) \cap  H_{l_m} \neq \{0\}$.    Passing to a subsequence we may assume that
$\frac{a_{l_m}}{\alpha_{l_m}} \rightarrow a$ converges to $a \in \R^k$.   Since $S(a) \geq 2 \sigma$, $\lambda _i (a) \geq \sigma$ for some $i$.  Hence we get for all large $m$ that $\lambda _i (a_{l_m}) \geq  l_m \sigma $.

 Let $E= \oplus _{j \neq i} E_j$.  By Katznelson's Lemma applied to $E$, there is a constant $C >0$ such that for $0 \neq z \in \Z^n$, the distance $d(z, E) > C \| z \| ^{-n}$.
Suppose $z_{l_m} \in H_{l_m}$ with $\tau (a_{l_m}) z _{l_m}\in H_{l_m}$. Then we  get
$$\|  z_{l_m} \| < b r^{l_m} \text{ and  }\| \tau (a_{l_m}) z_{l_m} \| < b r^{l_m}.  $$

Denote by $\pi _i$ the projection to $E_i$ along $E$. Then $\| \pi_i (z_{l_m}) \| = d(z_{l_m},E) \geq C \|z_{l_m}\| ^{-n} > C b^{-n} r ^{-n l_m}.$

 As $E$ and $E_i$ are transversal and have constant angle, there is a constant $M$ such that for all $v \in \R^n$, $\pi _i (v) \leq M \|v\|$.
Hence $\| \tau (a _{l_m} )(\pi _i (z_{l_m}))\| = \|\pi _i (\tau (a_{l_m}) z_{l_m}) \| < M b r^{l_m}$.   On the other hand,  we will show below that
$$ \|\tau (a_{l_m}) (\pi _i (z_{l_m})) \| \geq {  \frac{1}{D} } e^{\sigma l_m} b^{-n} r^{-nl_{m}}.$$
 Indeed, this estimate is clear when $\tau (a)$ is semisimple but needs more care when $\tau(a)$ has nontrivial Jordan form.
   This estimate will yield  a contradiction to the Lyapunov exponent $\lambda _i (a)$ of $a$ to be at least $\sigma$.  Here is the detail.

Set $v_{l_m} := \frac{\pi _i (z_{l_m})}{\|\pi _i (z_{l_m})\|}$. By the estimates above we get
\[ \|\tau ( a_{l_m} )(v_{l_m}) \|\leq \frac{M b r^{l_m} }{\|   \pi_i z_{l_m} \| } \leq  M C^{-1} b^{n+1} r^{(n+1) l_m}.\]

Set  $b _{l_m} := a -  \frac{a_{l_m}}{\alpha _{l_m}}$.   Then $b_{l_m} \rightarrow 0$.  For all large $m$, we may assume that $b_{l_m}$ expands vectors by a factor of at most $r$.   Since $l_m \le \alpha_{l_m}$ this  implies

\[ \|\tau (\alpha _{l_m} a) (v_{l_m}) \| = \| \tau (\alpha _{l_m} b _{l_m}) \tau ( a _{l_m}) (v_{l_m}) \| \leq MC^{-1} b^{n+1} r^{(n+1) l_m} r^{ \alpha _{l_m}}\leq  M C^{-1} b^{n+1} r^{(n+2)\alpha_{ l_m}}\]

Find a basis $w_1, \ldots w_s$ of $E_i$ which brings $a$ to Jordan form.  Write $v_{l_m} = x^1 _{l_m} w_1 + \ldots +x^s _{l_m} w_s$.
Passing to a subsequence the $v_{l_m}$ converge.  Suppose $j$ is the last  coordinate  such that $x^j _{l_m} \rightarrow x^j \neq 0$.  Then  $\tau (\alpha _{l_m} a) (v_{l_m}) $ has $j$-coordinate of absolute value $ x^{j} e^{\alpha _{l_m} \: \lambda _i (a)} $.    Since the sup  norm determined by the basis $w_1, \ldots, w_r$ is equivalent to the standard Euclidean norm,  there is a constant M' such that $ \|\tau(\alpha _{l_m}a) (v_{l_m}) \|  > M' x^{j} e^{\alpha _{l_m} \: \lambda _i (a)} $.  Hence
\[ M' x^{j} e^{\alpha _{l_m} \: \sigma } < M' x^{j} e^{\alpha _{l_m} \: \lambda _i (a)}    <        M C^{-1} b^{n+1} r^{(n+2)\alpha_{ l_m}}.\]
This is impossible for large $l_m$ by choice of $r$ and $\sigma$.
\end{proof}

We will use the approximation by Fej\'{e}r kernel functions
 $K_l (t) = \sum _{j=-l} ^l \big( 1- \frac{|j|}{l+1}\big) e ^{2 \pi i j t} $, and refer to \cite[chapter I]{Katznelson} for  details.

 Set $F_l (t_1, \ldots, t_n) = K_l (t_1) \ldots  K_l (t_n)$.  For  continuous $f: {\T} ^n \mapsto \R$, $ K_l \star f$ is supported on $H_l$.
Endow  the space
\[H_{\theta} = \{ f: \T^n \rightarrow \R \mid f \text{ is H\"{o}lder with H\"{o}lder exponent } \theta\ \} \]
for $0 < \theta <1$
 with the norm
\[ \| f \| _{\theta} = \|f \| _{\infty} + \sup _{t, h \neq 0} \frac{\| f( t+ h) - f(t) \|}{ \|h\| ^{\theta} }  .\]
As in \cite[p. 21, Exercise 1]{Katznelson}, we get

\begin{lemma}
There is a constant $C = C ( \theta)$ such that the map $H_{\theta} \mapsto L_{\infty} (\T^n)$ given by $ f \mapsto F_m \star f$ satisfies the estimate
\[ \| F_m \star f -f \|_{\infty}   \leq C(\theta) \|f\| _{\theta} m ^{ - \theta} .\]

 \end{lemma}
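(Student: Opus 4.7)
The plan is to reduce the claim to a one-dimensional integral estimate for the Fej\'er kernel, then exploit the concentration of $K_m$ near the origin.

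First I would write
\begin{equation*}
F_m \star f(t) - f(t) = \int_{\T^n} F_m(s)\bigl(f(t-s) - f(t)\bigr)\,ds,
\end{equation*}
using that $F_m$ is a positive summability kernel with total mass $1$. The H\"older hypothesis gives $|f(t-s) - f(t)| \le \|f\|_\theta \|s\|^\theta$, so taking a supremum over $t$ yields
\begin{equation*}
\|F_m \star f - f\|_\infty \le \|f\|_\theta \int_{\T^n} F_m(s)\,\|s\|^\theta\,ds.
\end{equation*}
Since $0<\theta<1$ and $\|s\|^\theta \lesssim_n |s_1|^\theta + \cdots + |s_n|^\theta$, and since $F_m$ is a product of one-dimensional Fej\'er kernels each with integral $1$, the multidimensional integral reduces to a sum of $n$ copies of $\int_{-1/2}^{1/2} K_m(s)|s|^\theta\,ds$.

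Next I would establish the one-dimensional bound $\int_{-1/2}^{1/2} K_m(s)|s|^\theta\,ds = O_\theta(m^{-\theta})$ using the standard closed form $K_m(s) = \frac{1}{m+1}\bigl(\sin(\pi(m+1)s)/\sin(\pi s)\bigr)^2$ and splitting at the scale $|s|=1/m$:
\begin{align*}
\int_{|s|\le 1/m} K_m(s)|s|^\theta\,ds &\le (m+1)\cdot\tfrac{2}{m}\cdot m^{-\theta} = O(m^{-\theta}), \\
\int_{1/m < |s| \le 1/2} K_m(s)|s|^\theta\,ds &\le \frac{C}{m}\int_{1/m}^{1/2} s^{\theta-2}\,ds = O(m^{-\theta}),
\end{align*}
where on the outer region we used $\sin(\pi s)\ge 2s$ and $|\sin|\le 1$, and the last integral converges precisely because $\theta<1$. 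Combining these two bounds produces a constant depending only on $\theta$ (the dimension $n$ is fixed), which finishes the argument.

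There is no real obstacle here; the entire proof is a standard convolution/approximation-of-identity computation of the kind indicated by the exercise reference in Katznelson. The only mild bookkeeping point is ensuring that the constant absorbs the dimensional factor coming from $\|s\|^\theta \le n^{\theta/2}\max_i|s_i|^\theta$, which is harmless since $n$ is a fixed parameter of the ambient torus.
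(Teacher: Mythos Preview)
Your argument is correct and is exactly the standard computation that the paper defers to \cite[p.~21, Exercise~1]{Katznelson}; the paper gives no proof of its own beyond that citation. Your reduction via $\|s\|^\theta \le \sum_i |s_i|^\theta$ to the one-dimensional Fej\'er estimate, followed by the split at $|s|=1/m$ using $K_m\le m+1$ and $K_m(s)\lesssim (ms^2)^{-1}$, is precisely the intended route.
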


\begin{Theorem}\label{matrixcoef}
Suppose $\Z^{k}$ acts affinely on $\T^{n}$ such that  for all  $0 \neq a \in \Z^k$, $\tau (a)$ acts ergodically.  Let $f$ and $g$ be two H\"{o}lder functions on
$\T^n$ with 
H\"{o}lder exponents $\theta$. {\colb Then there exists $r>1$ such that}  for any $a_l \in \Z^k$ with $\| a_l \|
\geq l $ we can bound the matrix coefficients
\[\left|\langle a_l f, g\rangle  - \int _{\T^n} f  \int  _{\T^n} g \right|  <  C(\theta)  \:\:  \left( 4   \|f\| _{\theta} \|g\|_2 + 2   \|g\| _{\theta} \|f\|_2 \right)\:\:    r^{-\theta l}   \]
 In particular, the matrix coefficients decay exponentially fast.
\end{Theorem}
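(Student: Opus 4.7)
\emph{Proof proposal.} The strategy is Lind's Fourier-analytic argument, transferred to the dual $\Z^k$ action on the frequency lattice. First, I would write $\tau(a)x = A_a x + v_a$ with $A_a \in SL(n,\Z)$ and observe that the assumption that every $\tau(a)$ with $a\neq 0$ is ergodic forces each $A_a$ to be ergodic (the translation part does not affect the Lyapunov exponents). Since $A_a$ and $A_a^T$ have the same eigenvalues, the dual $\Z^k$ action $a \mapsto A_a^T$ is also an ergodic action by integer toral automorphisms, with the same constant $\sigma$ from Lemma \ref{4.4}. Fix $r$ with $1 < r < e^{\sigma/(n+2)}$ so that Lemma \ref{4.5} is available for the dual action: for all sufficiently large $l$ and $\|a\| \geq l$,
\[
A_a^T(H_l) \cap H_l = \{0\}.
\]

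Next I would cut off $f$ and $g$ on the Fourier side. Setting $f_l = F_{m} \star f$ and $g_l = F_{m} \star g$ with $m = \lfloor r^l \rfloor$, the Fourier coefficients $\hat f_l(z)$ and $\hat g_l(z)$ vanish outside the cube $H_l$, and the Fejér approximation lemma gives
\[
\|f - f_l\|_{\infty} \leq C(\theta)\,\|f\|_\theta\, r^{-\theta l}, \qquad \|g - g_l\|_{\infty} \leq C(\theta)\,\|g\|_\theta\, r^{-\theta l},
\]
with the same bound (up to constants) in $L^2$. Expanding the matrix coefficient of the smoothed pair as a Fourier series,
\[
\langle \tau(a_l) f_l, g_l\rangle = \sum_{z \in H_l} \hat f_l(z)\, \overline{\hat g_l(A_{a_l}^T z)}\, e^{2\pi i \langle A_{a_l}^T z, v_{a_l}\rangle},
\]
the only $z \in H_l$ with $A_{a_l}^T z \in H_l$ is $z = 0$ by the previous paragraph. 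The $z=0$ term is exactly $\hat f(0)\,\overline{\hat g(0)} = \int f \cdot \int g$, so
\[
\langle \tau(a_l) f_l, g_l\rangle \;=\; \int_{\T^n} f \,\int_{\T^n} g.
\]

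Finally, I would close the estimate by comparing $\langle \tau(a_l) f, g\rangle$ to $\langle \tau(a_l) f_l, g_l\rangle$ using the bilinear splitting
\[
\langle \tau(a_l)f, g\rangle - \langle \tau(a_l) f_l, g_l\rangle = \langle \tau(a_l)(f - f_l), g\rangle + \langle \tau(a_l) f_l, g - g_l\rangle,
\]
together with unitarity of $\tau(a_l)$ on $L^2$ and the Fejér bounds above, plus the trivial estimate $\|f_l\|_2 \leq \|f\|_2 + \|f-f_l\|_2$. Collecting terms produces a bound of the form $C(\theta)\bigl(4\|f\|_\theta\|g\|_2 + 2\|g\|_\theta\|f\|_2\bigr) r^{-\theta l}$, as required.

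The main obstacle, as I see it, is the cancellation step: one must correctly interpret the action on Fourier coefficients (choosing conventions so that the transpose $A_a^T$ appears) and verify that Lemma \ref{4.5} really applies to the dual action. Once that is set up, everything else is bookkeeping with the Fejér approximation and unitarity on $L^2$.
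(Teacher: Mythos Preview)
Your proposal is correct and follows essentially the same approach as the paper: Fej\'er truncation of $f$ and $g$ to the frequency cube $H_l$, Lemma~4.5 to force the trigonometric-polynomial pairing to reduce to the constant mode, and a bilinear splitting of the remainder controlled by the Fej\'er approximation bound and unitarity on $L^2$. Your explicit attention to the transpose action on the character lattice (and to the affine translation, which only contributes unimodular phases) is a detail the paper leaves implicit; since $A_a$ and $A_a^{T}$ have the same eigenvalues, the constant $\sigma$ of Lemma~4.4 and hence Lemma~4.5 carry over to the dual action without change.
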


\begin{proof} We can can assume that  $\int _{\T^n} f = \int  _{\T^n} g =0$ are both 0 by subtracting the constants $\int _{\T^n} f$ and $ \int  _{\T^n} g$ from $f$ and $g$ respectively.

{\colb We  pick $1 < r < e^{\frac{\sigma}{n+2}} $ as in Lemma \ref{4.5} where $\sigma$ is as in Lemma  \ref{4.4}.}
 Let $m= [r^l] $ , the largest integer smaller than $r^l$.  Set  $f_l =K_ m \star f$ and $g_l = K_m \star g$ with frequencies in $H_l$.  Then  $\int _{\T^n} f_l = \int  _{\T^n} g_l =0$ and
$\| f - f_l \| _{\infty} \leq  2 C(\theta)  \|f\| _{\theta} (r^l) ^{-\theta} $ and  $\| g - g_l \| _{\infty} < 2 C(\theta) \|g\|_{\theta}  (r^l) ^{-\theta} $ where the 2 accounts for the discrepancy coming from $m$ versus $r^l$.     By the last lemma, we get
\[ \langle a_l (f), g\rangle = \langle a_l f, (g- g_l)\rangle + \langle a_l (f-f_l), g_l \rangle + \langle a_l (f_l), g_l\rangle .\]
The last term is eventually 0 since the constant term is 0 and $a_l$ moves $H_l$ off itself.  The first  term is bounded by
\[ \|f\|_2 \|g-g_l\|_{\infty} \leq 2  C(\theta) \|g\| _{\theta} \|f\|_2  \:\: r^{-\theta l}
.  \]
 Take $l$ large enough so that $\| g - g_l \| _{\infty} < 2 C(\theta) \|g\|_{\theta}  (r^l) ^{-\theta} <2$,  Then the second term is bounded by
\[ \|g_l\|_2 \|f-f_l\|_{\infty} \leq    2    C(\theta)  \|f\|_{\theta} \|g_l\|_2  \:\: r^{-\theta l} \leq    4    C(\theta)  \|f\|_{\theta} \|g\|_2  \:\: r^{-\theta l} .\]
   This yields the desired estimate   \end{proof}

\begin{corollary}
\label{corollary:matrixcoef} The same statement as above holds for
any Anosov $\Z^k$ action with $k>1$  where every element acts
ergodically.
\end{corollary}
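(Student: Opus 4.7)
The plan is to transfer the estimate from Theorem \ref{matrixcoef} to the general Anosov case via the Franks--Manning conjugacy. Since $\alpha$ is an Anosov $\Zk$-action on the torus (we are still in the toral setting here), Proposition \ref{mu} gives a bi-H\"older homeomorphism $\phi : M \to \T^n$ and an affine $\Zk$-action $\rho$ with $\rho(a) \circ \phi = \phi \circ \alpha(a)$, and $\alpha$ preserves the probability measure $\mu = \phi^{-1}_\ast \lambda$, where $\lambda$ is Haar measure on $\T^n$. Let $\gamma \in (0,1]$ be a common H\"older exponent of $\phi$ and $\phi^{-1}$.

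First I would observe that every nonzero element of $\rho$ is ergodic with respect to $\lambda$: indeed, $\rho(a) = \phi \circ \alpha(a) \circ \phi^{-1}$, and ergodicity of $\alpha(a)$ with respect to $\mu$ is equivalent, via the homeomorphism $\phi$, to ergodicity of $\rho(a)$ with respect to $\phi_\ast \mu = \lambda$. Thus $\rho$ satisfies the standing hypothesis of Theorem \ref{matrixcoef}.

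Next, given H\"older functions $f, g$ on $M$ with exponent $\theta$, define $F = f \circ \phi^{-1}$ and $G = g \circ \phi^{-1}$ on $\T^n$. Because $\phi^{-1}$ is $\gamma$-H\"older, $F$ and $G$ are H\"older with exponent $\theta' = \gamma \theta$, and their H\"older norms are controlled by $\|f\|_\theta$, $\|g\|_\theta$ together with a constant depending only on $\phi^{-1}$. Moreover, the change-of-variables identity $\mu = \phi^{-1}_\ast \lambda$ yields
\[
\int_M f\,d\mu = \int_{\T^n} F\,d\lambda, \qquad \|f\|_{L^2(\mu)} = \|F\|_{L^2(\lambda)},
\]
and similarly for $g$, $G$. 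The same change of variables, combined with the intertwining $\alpha(a) = \phi^{-1} \circ \rho(a) \circ \phi$, gives
\[
\langle \alpha(a)^{\ell} f,\, g \rangle_{L^2(\mu)} \;=\; \langle \rho(a)^{\ell} F,\, G \rangle_{L^2(\lambda)}.
\]

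Now I would simply invoke Theorem \ref{matrixcoef} for the affine action $\rho$ applied to $F, G$ with H\"older exponent $\theta' = \gamma\theta$: for any sequence $a_\ell \in \Zk$ with $\|a_\ell\| \geq \ell$, the quantity $|\langle \rho(a_\ell) F, G\rangle - \int F \int G|$ is bounded by $C(\theta')\bigl(4\|F\|_{\theta'}\|G\|_2 + 2\|G\|_{\theta'}\|F\|_2\bigr) r^{-\theta' \ell}$ for some $r > 1$ depending only on $\rho$. Translating each factor back through $\phi$ produces the same statement for $\alpha$, with a possibly smaller exponent $\gamma\theta$ in place of $\theta$ and with H\"older norms of $f, g$ replacing those of $F, G$ up to a multiplicative constant depending only on $\phi$. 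There is no real obstacle here; the only point to watch is the reduction of the H\"older exponent by the factor $\gamma$ and the corresponding multiplicative constants depending on the conjugacy, but none of this affects the qualitative conclusion of exponential decay.
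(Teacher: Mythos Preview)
Your proposal is correct and follows essentially the same approach as the paper: transfer the estimate of Theorem~\ref{matrixcoef} through the bi-H\"older Franks--Manning conjugacy, using Proposition~\ref{mu} to identify the invariant measure $\mu$ with the pushforward of Haar measure. The paper's proof is just a one-sentence sketch of exactly this argument; your version spells out the details (the reduction of the H\"older exponent by $\gamma$, the change-of-variables identities for the inner product and the integrals) that the paper leaves implicit.
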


\begin{proof}
This combines Theorem \ref{matrixcoef}, the existence of a H\"{o}lder conjugacy,
and the fact that we define matrix coefficients with respect to the pushforward
measure which is the unique smooth invariant measure by Proposition \ref{mu}.
\end{proof}


{\colb
\section{ Regularity and the Proof of Theorem 1.1}    \label{soln-cohomology}
}

In this section we complete the proof of Theorem \ref{theorem:main}
by showing that the Franks-Manning conjugacy $\phi$ between
the $\Z ^k$-actions $\a$ and $\rho$ is smooth.  We will use $\phi$
and the uniform exponential estimates along the coarse Lyapunov
foliations of $\a$ from Section \ref{uniform estimates}, but we will not
use Anosov elements explicitly in this section. Instead, we will use
the subgroup $\Z ^{2}$ consisting of ergodic elements that we postulated
in Theorem \ref{theorem:main}.  Theorem \ref{matrixcoef} gives exponential
mixing with uniform estimates along this $\Z ^{2}$. This allows us to define
distributions on H\"{o}lder functions which correspond to the components
of the conjugacy and their derivatives. First however, we  will make some reductions to the general case.

By passing to a finite index subgroup of $\Z^k$
we can assume that the action $\a$ has a common fixed point.
First we reduce the problem to the case when $\a$ acts on the torus with
the standard differentiable structure. Note that a construction due to
Farrell and Jones shows that there exist Anosov diffeomorphisms
of exotic tori \cite{FarrellJones}.  However, every exotic torus of dimension
at least 5 has a finite cover which is diffeomorphic to the standard torus
\cite[Chapter 15 A, last unitalized paragraph]{Wall}. In this case we can
consider the lifts of the actions and the conjugacy. Clearly, the smoothness
of $\phi$ follows from the smoothness of its lift. We will give an independent
 argument in Section \ref{four} for the case of 4-dimensional tori. Hence, without
loss of generality, we can assume that $\a$ acts on the same standard torus
as $\rho$. {\colb In dimensions $2$ and $3$, by Remark \ref{non-exotic} in the Appendix,  there are no exotic
differentiable structures, though this fact is not strictly needed here.  In dimension $3$, Theorem \ref{theorem:main}
follows from the main result of \cite{RH}.  As explained in Section \ref{four}, 
there are no higher rank Anosov actions on tori in dimension $2$.}

By changing coordinates we can also assume that $0$ is a common fixed
point for both $\a$ and $\rho$. Then there exists a unique conjugacy $\phi$
in the homotopy class of identity satisfying $\phi (0)=0$. We can lift  $\phi$ to
the map $\tilde \phi : \R^n \to \R^n$ satisfying $\tilde \phi (0)=0$ and write it
as $\tilde \phi=I + h$, where $h: \R^n \to \R^n$ is $\Z^n$ periodic.

 Consider  an
element $a$ in $\Z ^2$  and abbreviate $\a(a)$ to $a$ and
$\rho (a)$ to $\bar A$. We denote their lifts to $\R^n$ that fix $0$ by
$\tilde a$ and $A$ respectively and note that $A$ is linear.  Since $\phi$
is a conjugacy and the lifts fix $0$, they satisfy $\tilde \phi \circ\tilde a = A \circ \tilde \phi$. Hence we obtain
  \[ (I+h) (\tilde a (x)) = A \,(I+h) (x),\]
 which is equivalent to
\[
 h(x) = A^{-1} (\tilde a(x) -A(x)) + A ^{-1} (h(\tilde a(x))) = Q(x) + A ^{-1} (h(\tilde a(x)))
\]
where $Q(x) = A^{- 1} (\tilde a(x) - A(x))$. Note that  $Q(x)$ is smooth since
 $a$ is smooth with respect to the standard differentiable structure
(this will be crucial later). Since $h$ is $\Z^n$ periodic it is easy to see
that $A ^{-1} (h(\tilde a(x)))$ and hence $Q(x)$ are also $\Z^n$ periodic.
For the remainder of this section we will view $h$ and $Q$ as functions
from $\T^n$ to $\R^n$. The functional equation on $\T^n$ becomes
\begin{equation} \label{hoper}
 h(x) = Q(x) + A ^{-1} (h(ax)).
 \end{equation}

Fix a coarse Lyapunov foliation $\mathcal V$ of $\a$ and the corresponding
linear coarse Lyapunov foliation $\bar {\mathcal V}$ of $\rho$.
 Let $V$ be the subspace of $\R^n$ parallel
to $\bar {\mathcal V}$ and $W$ be the complementary $A$ invariant
subspace, which is parallel to the sum of all coarse Lyapunov foliations
of $\rho$ different from $\bar {\mathcal V}$. Denote by $h_V: \R^n \to V$
the projection of $h$ to $V$  along $W$. Since  $V$ is $A$-invariant, projecting
equation $(\ref{hoper})$ and letting $A_V$ denote the restriction of $A$ to $V$
we obtain
\begin{equation} \label{hVoper}
 h_V (x) = Q_V(x) + A_V ^{-1} (h_V(ax)) =: F_V(h_V) (x)
 \end{equation}
where $Q_V$ denotes the projection of $Q$ to $V$ along $W$.

We will use the functional equation \eqref{hVoper} with well-chosen elements
 $a$ to study the derivatives
of $h_V$ along the coarse Lyapunov foliations of $\a$.  These derivatives
exist, a priori, only in the sense of distribution on smooth functions. The
crucial element of the proof is Lemma \ref{lemma:derivativesofh} below
which shows that these distributional derivatives extend to functionals on the
spaces of H\"{o}lder functions.
{\colb
We emphasize that this lemma is quite general, and may be useful in other situations.  The main ingredients are the uniform exponential estimates with arbitrarily small exponents along coarse Lyapunov foliations,  and exponential mixing for H\"older functions.  The key idea is that in our estimates for derivatives,  the exponential decay coming from exponential mixing overcomes  small exponential growth coming from derivatives.
}

\begin{lemma}
\label{lemma:derivativesofh} For any coarse Lyapunov foliation
$\mathcal V'$ of $\a$, possibly equal to $\mathcal V$, and for any
$\theta>0$ the derivatives of $h_V$ of any order along $\mathcal V'$ exist as distributions on the space of $\theta$-H\"{o}lder functions.
\end{lemma}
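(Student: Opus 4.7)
The plan is to iterate the functional equation (\ref{hVoper}) forward, using a carefully chosen $a \in \Z^2$ with $A_V$ expanding, to obtain the uniformly convergent series
$$h_V(x) \;=\; \sum_{n=0}^\infty A_V^{-n}\, Q_V(a^n x),$$
then formally differentiate term by term along a smooth vector field $X$ tangent to $\mathcal V'$, and test the resulting series against $f \in C^\theta$. Since $a$ preserves $\mathcal V'$, iterating the identity $X(\psi\circ a^n) = ((a^n)_* X)(\psi) \circ a^n$ yields the clean formula
$$X^k(Q_V \circ a^n) \;=\; (X_n^k Q_V) \circ a^n, \qquad X_n := (a^n)_* X,$$
so the formal derivative reads $X^k h_V = \sum_n A_V^{-n}(X_n^k Q_V)\circ a^n$, and the goal is to show this sums to a continuous functional on $C^\theta$.

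The crucial step is the choice of $a$. Writing $a = \|a\|\, u$, Proposition~\ref{e-slow} and Lemma~\ref{lemma:compositions} give $\|X_n^k Q_V\|_{C^1} \le C e^{n(k+1)(\chi_M(a)+\e)}$ and $\|A_V^{-n}\| \le C e^{-n\chi(a)}$, with $\chi_M(a) = \|a\|\chi_M(u)$ and $\chi(a) = \|a\|\chi(u) > 0$. Combined with the $\|a\|$-dependent exponential mixing rate $r^{-\theta n \|a\|}$ from Corollary~\ref{corollary:matrixcoef}, the $\|a\|$-factor cancels in all exponents and summability reduces to the scale-invariant inequality
$$(k+1)\,\chi_M(u) \;<\; \chi(u) + \theta \log r.$$
When $\mathcal V' \neq \mathcal V$, choosing $u$ close to the wall of $\mathcal V'$ (on the expanding side of the wall of $\mathcal V$) makes $\chi_M(u)$ arbitrarily small while $\chi(u)$ stays bounded below, so the inequality is immediate. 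When $\mathcal V' = \mathcal V$, Lemma~\ref{upper est} forces $\chi_M(u)/\chi(u) \ge \gamma^{-1} > 1$, so $\chi_M$ always dominates $\chi$ pointwise; however, both vanish linearly as $u$ approaches the common wall, and the inequality becomes $\chi(u)\cdot[(k+1)\gamma^{-1} - 1] < \theta \log r$, still satisfied for $u$ sufficiently close. Denseness of rational directions in $\R^2$ then provides a suitable $a \in \Z^2$.

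For each term, exponential mixing gives
$$\int (X_n^k Q_V)\circ a^n \cdot f\, d\mu \;=\; \int X_n^k Q_V\, d\mu \cdot \int f\, d\mu \,+\, O\!\bigl(r^{-\theta n\|a\|}\|X_n^k Q_V\|_\theta \|f\|_\theta\bigr).$$
The key observation is that $\int X_n^k Q_V\, d\mu$ is uniformly bounded in $n$: by $a$-invariance of $\mu$ and the identity above, $\int X_n^k Q_V\, d\mu = \int X^k(Q_V \circ a^n)\, d\mu$, and integration by parts against the \emph{fixed} vector field $X$ produces $\int (Q_V\circ a^n)\cdot (X^*)^k 1\, d\mu$, where $(X^*)^k 1$ is a smooth function independent of $n$. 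Multiplied by $\|A_V^{-n}\|$ and summed, the main term contributes at most $C\|f\|_1 \le C\|f\|_\theta$, while the choice of $a$ makes the error sum a convergent geometric series bounded by $C\|f\|_\theta$, completing the plan.

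The main obstacle is the case $\mathcal V' = \mathcal V$: there the ratio $\chi_M/\chi$ is bounded below by $\gamma^{-1} > 1$ for every admissible $a$, so no purely pointwise comparison of $\|A_V^{-n}\|$ with the chain-rule growth can yield a summable series, and exponential mixing is indispensable. What makes mixing succeed is the integration-by-parts trick shifting derivatives off the growing pushforward $X_n$ onto the fixed $X$, which keeps the otherwise problematic principal mixing term uniformly bounded, combined with the $\|a\|$-dependent decay rate in Corollary~\ref{corollary:matrixcoef} that provides scale-invariance of the summability condition and thus lets direction selection in $\Z^2$ close the argument.
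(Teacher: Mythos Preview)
Your overall architecture is close to the paper's, but the error–term estimate has a genuine gap that breaks the argument. You claim, via Proposition~\ref{e-slow} and Lemma~\ref{lemma:compositions}, that $\|X_n^k Q_V\|_{C^1}\le C e^{n(k+1)(\chi_M(a)+\e)}$ with $\chi_M$ the maximal exponent \emph{on} $\mathcal V'$. Those two results only control $\|\cdot\|_{k,\mathcal V'}$, i.e.\ the sup norm of derivatives \emph{along} $\mathcal V'$. The mixing estimate in Corollary~\ref{corollary:matrixcoef}, however, requires the $\theta$-H\"older norm of $X_n^kQ_V$ on the \emph{whole} torus. Because the coarse Lyapunov distribution is only H\"older, the vector field $X$ (hence $X_n=(a^n)_*X$) is not smooth; computing the transverse H\"older variation of $X_n^kQ_V$ forces you through the transverse variation of $Da^n|_{E^{\mathcal V'}}$, which is governed by the second derivative of $a^n$ and hence by the \emph{full} top Lyapunov exponent of $a$, not by $\chi_M(a)$. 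By Lemma~\ref{maxexp} this full exponent is bounded below uniformly over directions in $\Z^2$, so no choice of $u$ near a wall can make it small, and your summability inequality $(k+1)\chi_M(u)<\chi(u)+\theta\log r$ no longer implies convergence of the error series. The same issue prevents the ``obvious'' repair of integrating by parts first and applying mixing to $\langle Q_V\circ a^n,(X^*)^kf\rangle$: for a merely H\"older test function $f$ the object $(X^*)^kf$ is undefined.

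The paper's proof is engineered precisely to dodge this obstacle. It mollifies the test function, $f_\e=f*\phi_\e$, and splits each term into a piece paired with $f_\e$ and a piece paired with $f-f_\e$. On the $f_\e$ piece one \emph{can} integrate by parts, moving the $\mathcal V'$-derivatives onto the smooth $f_\e$; mixing is then applied to $\langle Q_V\circ a^m,(f_\e)^{k,\mathcal V'}\rangle$, where the first factor has \emph{fixed} H\"older norm $\|Q_V\|_\theta$ and all the $n$-growth sits in $\|(f_\e)^{k,\mathcal V'}\|_\theta\lesssim\e^{-(n+k+1)}$. On the $f-f_\e$ piece one uses only sup norms, and here the purely leafwise estimate $\|Q_V\circ a^m\|_{k,\mathcal V'}\le N_1^{mk}P(mN_k)$ from Lemma~\ref{lemma:compositions} is legitimate. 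Optimizing $\e=\e(m)$ balances the two pieces into a summable series. A secondary point you skipped: when $\Z^2\subset L$ one cannot choose $a\in L^+$ at all, so the series for $h_V$ fails to converge uniformly; the paper handles this by establishing convergence in the space of distributions on zero-mean smooth functions before differentiating.
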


\begin{proof}
Let $L,L^+,L^- \subset \R^k$ be the Lyapunov hyperplane and the positive
and negative Lyapunov half-spaces corresponding to $\mathcal V$.
Let $L'$ be the Lyapunov hyperplane corresponding to $\mathcal V'$.
In this proof we will choose $a$ in the $\Z^2$ subgroup consisting of
ergodic elements. We note that $\mathcal V$ and $\mathcal V'$ are
coarse Lyapunov foliations for $\a$-action of the full $\Z^k$ and that we make
no assumptions on the relative positions of  $\Z^2$, $L$, and $L'$ in
$\R^k$. We will choose $a$ in a
narrow cone in $\Z^2$ around $L' \cap \Z^2$, so that $a$ will  expand
$\mathcal V'$ at most slowly. In case $\Z^2 \subset L'$, this automatically
holds for all $a$ in $\Z^2$. Since any such cone can not be contained entirely in
$L^-$, we can always choose such an $a\in \Z^2$ in $L^+$ or $L$.

If $a\in L^+$ then $A_V^{-1}$ is a contraction. Then the operator $F_V$
in \eqref{hVoper} is a contraction on the space $C^0 (\T^n,V)$. Hence it
has a unique fixed point $\lim F_V^m(0)$, which therefore has to coincide
with $h_V$. Thus we obtain
\begin{equation} \label{hVseries}
h_V (x) = \sum _{m=0} ^{\infty} A_V^{-m }\:Q_V (a^{m} x).
\end{equation}

If $a\in L$ the series in \eqref{hVseries} does not converge in the
space of continuous functions. However, it converges in the space
$\mathcal D_0$ of distributions on smooth functions with zero average,
and the equality in \eqref{hVseries} holds in $\mathcal D_0$.
To see this we iterate \eqref{hVoper} to get
\begin{equation} \label{hVsum}
h_V (x) = \sum _{m=0} ^{N-1} A_V^{-m }\:Q_V (a^{m} x)
 +  A_V^{-N} h_V (a^{N} x).
\end{equation}
Since $\| A_V^{-m}\|$ grows at most polynomially in $m$ for $a\in L$,
and since $h_V$ is H\"older, Corollary \ref{corollary:matrixcoef} implies
that the pairing $\langle A_V ^{-N} h_V ( a^N x) , f \rangle   \to 0$ for
any H\"older function $f$ with $\int _{\T^n} f =0$. This establishes convergence and equality in \eqref{hVseries}
when both sides are considered as elements in $\mathcal D_0$.

We will use notations of Section \ref{subsection:jets} for derivatives. Given a
smooth function $g:\T^n\rightarrow \R^l$, we write $g^{k, \mathcal V'}$ for the
vector consisting of the derivatives of $g$ up to order $k$ along the foliation
$\mathcal V'$. If $g$ is a vector valued function on $\T^n$ and $f$ is a scalar valued function, we write $gf$ for the vector function obtained by
component-wise multiplication of $g$ by $f$. We then write
$\langle g,f \rangle $
for the vector obtained by integrating $gf$ over $\T^n$. We will use the
same notation $h^{k,{\mathcal V'}}_V$ for the vector of distributional
derivatives of $h_V$ along $\mathcal V'$ (see Section \ref{fromrauch}
for detailed description of distributional derivatives in the context of foliations).
Differentiating \eqref{hVseries} term-wise we obtain the formula for
$h^{k,{\mathcal V'}}_V$
\begin{equation} \label{h_V^k}
\langle h^{k,{\mathcal V'}}_V,f \rangle=
\sum_{m=0}^\infty \langle  A_V^{-m}(Q_V\circ a^m)^{k, \mathcal V'},f\rangle.
\end{equation}
Note that the derivative of a distribution is defined by its values on
derivatives of test functions \eqref{equation:der}, and those have
zero average. Thus  convergence and equality in \eqref{h_V^k}
hold in the space $\mathcal D$ of distributions on smooth functions,
even if equality in \eqref{hVseries} hold only in $\mathcal D_0$.
Since $Q_V$ is smooth, the pairings in the series in \eqref{h_V^k}
are simply given by integration. To show that $h^{k,{\mathcal V'}}_V$
extends to a functional on the space of H\"{o}lder functions we will
now estimate these pairings in terms of the H\"{o}lder norm of $f$.

We will use smooth approximations of $f$ by convolutions
$f_{\varepsilon} = f \star \phi _{\varepsilon}$, where the kernel is
given by rescaling
$\phi _{\varepsilon} (x)= \varepsilon ^{-n}  \phi (\frac x{\varepsilon})$
of a fixed bump function $\phi$ and thus is supported on the ball of
radius $\varepsilon$ and satisfies
$$
\phi _{\varepsilon} \ge 0, \quad \int _{\T^n} \phi _{\varepsilon} =1, \quad
\|  \phi _{\varepsilon} \| _{C^k}= \varepsilon ^{-(n+k)}  \|\phi \| _{C^k}.
$$

Then it is  easy to check the following estimates, where $\| .\|_k$ denotes the $C^k$ norm for $k\ge0$,
 \begin{equation} \label{fe}
 \|f_{\varepsilon } -f \|_{0} \leq \varepsilon ^{\theta} \|f\|_{\theta} \quad
 \text{ for } 0<\theta \le 1
 \quad \text{and} \qquad
\|f_{\varepsilon}\|_{C^k}  \leq c_k \, \varepsilon ^{-n-k}  \|f\| _{0} \quad
 \text{ for } k \in \mathbb N
\end{equation}
\noindent where $f$ is a $\theta$-H\"{o}lder function and $c_k$ is a constant
depending only on $k$.
 First we estimate the pairings in \eqref{h_V^k} with $f_\e$. Note that $\| .\|_l \le \| .\|_k$ if $l\le k$.
 We have

\begin{multline*}
\|\langle A_V^{-m} (Q_V \circ a^m)^{k,{\mathcal V'}},f_\e\rangle \|  \le
\|A_V^{-m}\| \cdot \|\langle (Q_V \circ a^m)^{k,{\mathcal V'}},f_\e\rangle \| =\\
= \|A_V^{-m}\| \cdot  \|\langle Q_V \circ a^m,(f_\e)^{k,{\mathcal V'}}\rangle \|\, .
\end{multline*}
Since $\| (f_\e)^{k,{\mathcal V'}} \|_\theta \le \| (f_\e)^{k,{\mathcal V'}} \|_{1}
\le \| f_\e \|_{{k+1}}$,
using Corollary \ref{corollary:matrixcoef} and \eqref{fe} we can estimate
\begin{multline*}
\|\langle Q_V \circ a^m,(f_\e)^{k,{\mathcal V'}}\rangle \|  \le
 K_1 \,  r^{- m \|a\| \theta} \, \|Q_V \|_\theta \| (f_\e)^{k,{\mathcal V'}} \|_\theta
  \le\\ \le K_2 \, r^{- m \|a\| \theta}  \varepsilon ^{-(n+k+1)}
  \|Q_V \|_\theta \, \|f\| _{0} \, .
\end{multline*}
Since  $a$ is chosen in  $L^+ \cup L$, $\| A_V^{-1}\|$ grows at most polynomially in $\| a\|$ and thus, for any $\eta>0$, we can ensure
that $\| A_V^{-1}\|< (1+\eta)^{\|a\|}$ for all $a$ with sufficiently large norm.
Thus we conclude from the two equations above that
\begin{equation} \label{pfe}
\|\langle A_V^{-m} (Q_V \circ a^m)^{k,{\mathcal V'}},f_\e\rangle \|  \le
K_2 \, (1+\eta)^{m\|a\|} \, r^{- m \|a\| \theta} \, \varepsilon ^{-(n+k+1)}
\|Q_V \|_\theta \, \|f\| _{0} \, .
\end{equation}

Now we estimate the pairings in \eqref{h_V^k} with $f-f_\e$ using the supremum norm and estimating $\| A_V^{-m}\|$ as above
  $$
\|\langle A_V^{-m} (Q_V \circ a^m)^{k,{\mathcal V'}},(f-f_\e)\rangle \|  \le
\|A_V^{-m} (Q_V \circ a^m)^{k,{\mathcal V'}}\|_0 \cdot \| (f-f_\e)\|_0 \le
$$
$$
\|A_V^{-m}\| \cdot \| Q_V \circ a^m \|_{k,\mathcal V'} \cdot \e ^{\theta} \|f\|_{\theta} \le
(1+\eta)^{m\|a\|} \cdot \| a^m \|_{k,\mathcal V'} \cdot \|  Q_V \|_{k,\mathcal V'}  \cdot \e ^{\theta} \|f\|_{\theta}.
$$
Here we used notations of Section \ref{subsection:jets}. Denoting
$N_k = \|a\|_{k,\mathcal V'}$, and using equation \eqref{equation:jetssup}
from Lemma \ref{lemma:compositions} we conclude that
$$
\|\langle A_V^{-m} (Q_V \circ a^m)^{k,{\mathcal V'}},(f-f_\e)\rangle \|  \le
(1+\eta)^{m\|a\|} \cdot N_1^{mk} \, P(mN_k)\cdot \e ^{\theta} \cdot \|  Q_V \|_k  \cdot  \|f\|_{\theta}\, .
$$
Recall that we choose $a$ in a cone around $L' \cap \Z^2$. For any
$\eta>0$, by taking the cone sufficiently narrow and using Proposition
\ref{e-slow}, we can ensure that $N_1=\|a\|_{1, \mathcal V'} < (1+\eta)^{\|a\|}$ for any such $a$ with sufficiently large norm. Then from the last equation
we obtain that
\begin{equation} \label{pf-fe}
\|\langle A_V^{-m} (Q_V \circ a^m)^{k,{\mathcal V'}},(f-f_\e)\rangle \|
\leq  (1+\eta)^{m(k+1)\|a\|} \cdot P(mN_k) \cdot \e ^{\theta} \cdot \|  Q_V \|_k  \cdot  \|f\|_{\theta} \, .
\end{equation}

For any fixed $\theta$, we have a fixed rate of exponential
decay with respect to $m$ in \eqref{pfe}, but  the rate of
exponential growth in \eqref{pf-fe} can be made arbitrarily slow.
This allows us to choose $\e$ that gives exponentially decaying
estimates for both \eqref{pfe} and \eqref{pf-fe}. More precisely,
we take
$$
\varepsilon = r^{\frac{- m \|a\| \theta}{\theta + n +k+1}}
\qquad \text{and denote} \quad \zeta=r^{\frac{\theta^2}{\theta + n +k+1}} >1.
$$
Then we obtain from \eqref{pfe} and \eqref{pf-fe} that
$$
\|\langle A_V^{-m} (Q_V \circ a^m)^{k,{\mathcal V'}},f_\e\rangle \|  \le
K_2 \, (1+\eta)^{m\|a\|} \, \zeta^{- m \|a\|}  \cdot
 \|Q_V \|_\theta \, \|f\| _{0} \quad \text{and}
$$
$$
\|\langle A_V^{-m} (Q_V \circ a^m)^{k,{\mathcal V'}},(f-f_\e)\rangle \|
\leq  (1+\eta)^{(k+1)m\|a\|} \, P(mN_k) \; \zeta^{- m \|a\|} \cdot \|  Q_V \|_k \, \|f\|_{\theta}.
$$
For any $k$ we can now choose $a$, and hence $\eta$, so that
$\xi =\zeta \cdot (1+\eta)^{-(k+2)} >1$.  Since the polynomial $P$ and constant
$N_k$  depend only on $k$ and $a$, we can then estimate
$P(mN_k) \le K_3 (1+\eta)^{m\|a\|}$. Finally, we obtain from the last two equations that
$$
\|\langle A_V^{-m} (Q_V \circ a^m)^{k,{\mathcal V'}},f\rangle \|
\leq  K_4 \, \xi^{- m \|a\|} \cdot \|  Q_V \|_k \, \|f\|_{\theta}.
$$
Thus for any $\theta$ and $k$ we obtain
exponentially decreasing estimates for the terms in \eqref{h_V^k}. We conclude that $ \| \langle h^{k,{\mathcal V'}}_V,f \rangle \| \le C \|f\|_{\theta}$
and hence $h^{k,{\mathcal V'}}_V$ extends to a functional on the space
of $\theta$-H\"{o}lder functions.

{\colb
{\em Proof of Theorem 1.1:} We discussed actions on two- and three dimensional tori above, and will prove Theorem 1.1 for four-dimensional tori with an exotic smooth structure in the next section.    When the dimension is greater than four, as explained above, we can  pass to a finite cover by smoothing  theory  and assume that the smooth structure is standard.  Passing to a subgroup of finite index, we can also assume that $\alpha$ has a common fixed point.  By Lemma \ref{lemma:derivativesofh},  for any coarse Lyapunov foliation
$\mathcal V'$ of $\a$ and for any
$\theta>0$ the derivatives of $h_V$ of any order along $\mathcal V'$ exist as distributions on the space of $\theta$-H\"{o}lder functions.  Hence by Corollary~\ref{what we need}, all $h_V$ are $C^{\infty}$.}
{\colb Since
the subspaces $V$ span, $h$ is determined by the projections  $h_V$.
It follows that $h$ is $C^{\infty}$ and hence so is $\phi$. It remains to
show that $\phi$ is a diffeomorphism. Since $\phi$ is a homeomorphism,
it suffices to show that the differential of $\phi$ is everywhere non-degenerate. This  follows from Proposition \ref{mu} since we have
$\lambda = \phi _\ast (\mu)$ and $\mu$ has smooth positive density.

}

\end{proof}

\section{Four Dimensional Exotic Tori}   \label{four}

Now consider a higher rank Anosov action on a 4-dimensional
torus with an exotic differentiable structure.  Due to low dimension
we are able adapt arguments from \cite{FKS} to obtain the result in this case.

By passing to a finite index subgroup of $\Z^k$ we can assume that the
linear part $\rho$ acts by linear automorphisms from $SL(4,\Z)$.
We begin by analyzing possibilities for such actions on $\T^4$.
Let $A \in SL(4,\Z)$ be an Anosov element for $\rho$.  First we
claim that the characteristic polynomial of $A$ is irreducible over
$\Q$. Indeed, the only possible splitting would be into a
product of quadratic terms and would imply existence of a rational
invariant subspace of dimension two. Such a subspace would be
invariant with respect to a finite index subgroup of $\Z^k$. The
restriction of $\rho$ to the corresponding torus would still be Anosov
and contain a $\Z^2$ subgroup of ergodic elements, as ergodicity
is equivalent to having no root of unity as eigenvalue. The latter
however is impossible since Anosov actions on $\T^2$ can only have
rank one. More precisely, by the Dirichlet Unit Theorem the centralizer
of an irreducible Anosov matrix in $SL(n,\Z)$ is a finite extension
of $\Z^d$, where $d$ is $n-1$ minus the number of pairs of complex
eigenvalues. {\colb Moreover, all nontrivial elements of this $\Z^d$ are
semisimple. We conclude that $\rho(\Z^k)$ is a subgroup
of such $\Z^d \subset SL(4,\Z)$.}

We note that $\rho$ has four Lyapunov exponents (counted with multiplicity)
and $\chi_1+\chi_2+\chi_3+\chi_4=0$ by volume preservation. If no two are
negatively proportional then $\rho$, and hence $\a$, are so called TNS (totally nonsymplectic) and smoothness of the conjugacy follows from
 \cite[Theorem 1.1]{FKS}. Now suppose that there are negatively
proportional Lyapunov exponents. This case does not follow from any previous
theorem but can still be handled using techniques from \cite{FKS} and \cite{KaSa3}.
Note that in this case there are no positively proportional Lyapunov exponents,
as otherwise for elements near the
kernel of the negatively proportional ones all Lyapunov exponents
will be close to zero by volume preservation, contradicting Lemma \ref{maxexp}.
{\colb This implies that $\rho(\Z^k)$ contains matrices}
with pure real spectrum
and the coarse Lyapunov spaces for $\rho$ are one-dimensional and
totally irrational, so in particular the corresponding linear foliations of
$\T^4$ are ergodic.

For the nonlinear action $\a$ the coarse Lyapunov foliations are also
one-dimensional and any pair $W_i,W_j$ is jointly integrable in topological
sense by the conjugacy to the linear action. By \cite[Lemma 4.1]{KaSa3}
the joint foliation $W_{ij}$ has smooth leaves. {\colb For each $W_i$ consider a  $W_j$ which does not
correspond to negatively proportional exponents.   Then } one can see as in
 \cite[Proposition 5.2]{FKS} that there is an element that contracts $W_i$
faster than $W_j$ and conclude that  $W_i$ and $W_j$ are $C^\infty$
along the leaves of $W_{ij}$. In place of measurable normal forms in \cite{FKS},
for one-dimensional foliations we can use the nonstationary
linearization \cite[Proposition A.1]{KaLe} which is continuous on $M$ in the
$C^\infty$ topology. Hence a simple version of the holonomy argument
\cite[Proposition 8.1]{FKS} works for any $W_i$ using the holonomy
along such $W_j$. The argument shows that the conjugacy $\phi$
is $C^\infty$ along any $W_i(x)$ with the derivatives continuous on $M$.
Then the smoothness of $\phi$  follows  easily as in \cite{FKS}.



\section{The nilmanifold Case}
\label{nilmanifolds}

In this section we will describe the adaptations of our arguments   needed for the case of an Anosov action on an infranilmanifold  $M$.   Passing to finite covers, we can assume that  $ N/\Gamma$ is a nilmanifold.  Next we reduce to the case when the differentiable structure on  $ N/\Gamma$  is standard, i.e. given by the ambient Lie group structure. First we note that there are no
 nilmanifolds of dimension at most 4 supporting an Anosov automorphism  besides the torus.   Hence we can employ the theorem of J. Davis, proved in the appendix, that every exotic nilmanifold in dimension at least 5 has a finite cover with standard differentiable structure.  This allows to lift the actions to ones smooth with respect to a standard differentiable structure,  as in the beginning of Section \ref{soln-cohomology}.   Thus  the main theorem follows for nilmanifolds of dimension at least 5 provided it holds for actions on standard nilmanifolds.  We will now give a proof of the main theorem in this set-up .

First note that the arguments from Section 3 allowing uniform control of exponents work verbatim.   That certain distributions are dual to the space of H\"{o}lder functions  will again be key  to our arguments. This requires exponential mixing of the action which does not follow easily from  Fourier analysis or more generally representation theory anymore.  Instead we evoke a recent result by Gorodnik and the third author \cite{GorodnikSpatzierII}.  This is far less elementary than the results in Section \ref{exponential}, and use recent results of Green and Tao \cite{Green-Tao} on equidistribution of polynomial sequences.

\begin{Theorem} [Gorodnik-Spatzier]\label{exp-higher}
Consider a  $\Z^k$ action  $\alpha$  by ergodic affine diffeomorphisms  on an infra-nilmanifold.  Then for any $0< \theta <1$   there is $0< \lambda <1$ such that for any two $\theta$-H\"{o}lder functions $f,g: X \mapsto \R$ 
we get
\begin{align}
\left| \langle f \circ \alpha(z), g \rangle  -  \int _{\T^n} f  \int  _{\T^n} g   \right|    \leq O_{\theta}(\lambda ^{\|z\|} )  \|f\|_{\theta}  \|g\|_{\theta}
\end{align}
where $\|z\|$ denotes some fixed norm on $\Z^k$.
\end{Theorem}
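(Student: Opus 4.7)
The plan is to reduce to the abelian (toral) case handled in Corollary~\ref{corollary:matrixcoef} by an induction on the nilpotency step $s$ of $N$, combined with effective equidistribution of polynomial orbits on nilmanifolds. First I would pass to a finite cover to reduce to the nilmanifold case $M = N/\Gamma$ and subtract means to reduce to $\int f = \int g = 0$, absorbing the constants into the H\"older norms as in the proof of Theorem~\ref{matrixcoef}. Writing each $\alpha(z)$ as a composition of a left translation with an automorphism $A_z$ of $N$, the automorphisms preserve the lower central series $N = N^{(1)} \supset N^{(2)} \supset \cdots \supset N^{(s+1)} = \{e\}$ and induce $\Z^k$-actions by ergodic automorphisms on each abelian quotient $N^{(i)}/N^{(i+1)}$.

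Next I would orthogonally decompose $L^2(N/\Gamma)$ into functions invariant under $N^{(2)}$ and their orthogonal complement. Functions in the invariant subspace descend to the base torus $(N/[N,N])/\bar{\Gamma}$, where the induced $\Z^k$-action is affine and Corollary~\ref{corollary:matrixcoef} supplies the required exponential decay. For functions in the complement, I would Fourier-decompose along the central characters of successive abelian quotients; the iteration of $\alpha(z)$ then expresses the orbit of a point as a polynomial sequence of degree at most $s-1$ in $N$, while the characters get moved by the dual action of $A_z$. Applying the Green--Tao effective equidistribution theorem \cite{Green-Tao} for polynomial orbits on nilmanifolds produces quantitative decay for each Fourier block, and the uniformity in $z$ comes from the fact that the non-trivial characters grow at an exponential rate in $\|z\|$ under the dual action, a consequence of Lemma~\ref{maxexp} applied to the abelian layer. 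This exponential growth, fed into the equidistribution bound, yields exponential decay of the matrix coefficients uniformly in $z$. The passage from smooth test functions to H\"older ones is then carried out exactly as in Theorem~\ref{matrixcoef}: approximate by a Fej\'er-type smoothing, bound the smoothed term by the mixing estimate and the error by interpolation, and optimize the smoothing scale against the mixing rate.

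The main obstacle I expect lies in the bookkeeping of norms through the two independent exponential estimates: toral mixing on the base and Green--Tao on the polynomial orbits along the fibers. The Green--Tao estimate is formulated in terms of a Lipschitz or smooth norm of a test function on the nilmanifold, and the effective test functions produced after Fourier expansion depend explicitly on $z$ through the characters that $A_z$ moves around. Keeping this $z$-dependence at worst polynomial in $\|z\|$, so that it is absorbed by the exponential contribution coming from the abelian layer, is the delicate technical step, and it is precisely what forces one to use the quantitative (rather than merely qualitative) form of the Green--Tao theorem. Once this estimate is in place, assembling the contributions along the central series filtration by induction on $s$, and controlling the H\"older norms of the Fourier blocks as in Section~\ref{exponential}, produces the stated exponential mixing.
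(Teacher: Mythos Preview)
The paper does not contain a proof of Theorem~\ref{exp-higher}. It is stated as a result of Gorodnik and Spatzier and cited from the external reference \cite{GorodnikSpatzier}; the only information the paper offers about the argument is the remark in Section~\ref{nilmanifolds} that it ``use[s] recent results of Green and Tao \cite{Green-Tao} on equidistribution of polynomial sequences'' and is ``substantially more complicated'' than the elementary Fourier argument of Section~\ref{exponential}. There is therefore no in-paper proof to compare your proposal against.

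That said, your outline is consistent with the one hint the paper does provide: you invoke Green--Tao quantitative equidistribution as the core analytic input, which is exactly what the paper says is required. Your reduction to a toral base via the abelianization and your identification of the main difficulty --- keeping the $z$-dependence of the effective test functions polynomial so that the exponential gain from the dual action on characters dominates --- are both reasonable. One caution: you phrase the induction along the lower central series, and you appeal to Lemma~\ref{maxexp} for exponential growth of characters on each abelian layer. Lemma~\ref{maxexp} is proved only for $\Z^k$-actions on tori by ergodic automorphisms, and ergodicity of the full action on $N/\Gamma$ does not automatically imply ergodicity of the induced action on every abelian subquotient $N^{(i)}/N^{(i+1)}$ (only the abelianization is guaranteed to carry an ergodic action). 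So that step would need a separate justification, or the argument would have to be organized differently; but since the actual proof lives in \cite{GorodnikSpatzier}, this cannot be checked against the present paper.
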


We need to establish regularity of the solutions to   the cocycle equations employed in Section \ref{soln-cohomology}.  We are inspired by the approach of Margulis and Qian in \cite[Lemma 6.5]{MaQi}.   However, while they write their equations in exponential coordinates and directly  study the solutions in these coordinates, we will reduce the cocoycle equation to a series of equations, one for each term of the derived series of $N$.   This yields abelian valued cocycle equations to which we can apply the arguments from the toral case.    Here are the details.

As in Section \ref{soln-cohomology} we consider the lift $\tilde \phi : N \to N$
of the Franks-Manning conjugacy $\phi: N/\Gamma \to N/\Gamma$.
We can write it as a product $\tilde \phi = h  \cdot I$, where $h :N \to N$
satisfies
\begin{align}  \label{cocycleB}
 (h \cdot I) (a(x)) = A \,\big((h \cdot I) (x)\big)
 \end{align}
on $N$ and projects to the map from $N/\Gamma$ to $N$.

Let $N'$ be the commutator subgroup of $N$.  Pick a splitting of the Lie algebra ${\cal N} = {\cal N'} \oplus {\cal N}_{0} $ of $N$ where ${\cal N'}$ the Lie algebra of $N'$.  Note that ${\cal N}_{0}$ is not a Lie algebra.  Let
 $N_{0} = \exp {\cal N}_{0}$, where $\exp$ is the exponential map.
Now we decompose $h$ as a product $h=h_{1} \cdot h_{0}$, where
$h_{0}$ takes values in $N_{0}$ and $h_{1}$ takes values in $N'$, in the following way. We take $h_0$ to be the exponential of the ${\cal N}_{0}$
component of $\exp^{-1} h$ and define $h_1= h \cdot (h_0)^{-1}$.
One can see that $h_{1} \in N'$ from the Campbell-Hausdorff formula since all brackets
are in ${\cal N'}$. Note that $h_0$  and $h_1$ project to maps from $N/\Gamma$ to $N$.

 {\bf Step 1}:
 We first show that $h_{0}$ is smooth.  Let $\bar{h} : N \to N' \backslash N$ be the
 composition  of $h$ with the projection $N \to N' \backslash N$. Note that $h_{0}$
 is smooth precisely when $\bar{h}$ is smooth, since by construction $\exp^{-1} h_{0}$
 and $\exp^{-1} \bar{h}$ are just related by the identification of $ {\cal N}_{0}$ with
 the Lie algebra of $N' \backslash N$.   Write the group operation in $N' \backslash N$ additively.  Denote by $\bar{A}$ the induced automorphism of $N' \backslash N$.  Then we get
\[ (I+\bar{h}) (a(x)) = \bar{A}\,(I+\bar{h}) (x).\]
Now we can use exactly the same arguments as in Section \ref{soln-cohomology} and in particular exponential mixing  to show that $\bar{h}$ is smooth.

{\bf Step 2}: We write out Equation \ref{cocycleB} in terms of the decomposition $h=h_{1} \cdot h_{0}$:

\begin{align*}
 h_{1}(a(x)) h_{0}(a(x)) a(x)=  A(h_{1}(x)) A (h_{0}(x) )   A(x)
\end{align*}
This gives the formula
\begin{align*}
h_{1} (x) =  A^{-1} (h_{1}(a(x))) A^{-1}  (h_{0} (a(x))) A^{-1} (a(x))   x^{-1}   h_{0}(x) ^{-1}
\end{align*}
Since the automorphism $A$ leaves $N'$ invariant it follows that both $h_{1} (x)$ and $A^{-1}  (h_{1} (a(x)))$ belong to $N'$.  Hence the function $Q_{1} (x) := A^{-1}  (h_{0} (a(x))) A^{-1} (a(x))   x^{-1}   h_{0}(x) ^{-1}      $
also takes values in $N'$.  In addition, $Q_{1}(x)$ is smooth by construction and satisfies the functional equation
\begin{align*}
h_{1} (x) =A^{-1}  (h_{1} (a(x)))  Q_{1}(x).
\end{align*}
Since $h_1$ project to a map from $N/\Gamma$
then so do $A^{-1}  (h_{1} (a(x)))$ and, from the equation, $Q_{1}(x)$.
Thus the equation holds in $C^0(N/\Gamma, N')$.

Now mod out by the second derived group $N''$, and denote the projected maps by bars.  Again we write multiplication in $N''\backslash N'$ additively to get
\begin{align*}
\bar{h}_{1} (x) =  (\overline{A\mid _{N'}})^{-1}  (\bar{h}_{1} (a(x)))  +  \overline{Q}_{1}(x)
\end{align*}
We can analyze the solution to this equation once again using the methods from the basic toral case, and in particular exponential mixing and uniqueness of solutions.  We conclude that $\bar{h}_{1}$ is a smooth function.
Continue this analysis by decomposing $N'$ in terms of $N''$ and a complement $N_{1}$ to $N''$ inside $N'$.  Since the series of commutator maps terminates of a nilpotent Lie group, we see that
$h$ is a smooth function.

\section{Wavefront sets}
\label{fromrauch}

We establish regularity properties of a distribution whose derivatives along a foliation ${\cal F}$ are dual to H\"{o}lder functions in a suitable fashion.   While the definitions and concepts will be developed for foliations, the proof will be entirely local on an open subset of $R ^{n_1} \times  R ^{n_2}$ and only use partial derivatives along the second factor.   However, it will be important to develop the appropriate notions for foliations for our application to the conjugacy problem in the main part of the paper.

The main theorem is a variation of results of Rauch and Taylor in \cite{RT} who assume that derivatives of the distribution along a foliation belong to various function spaces.
The novelty  here  is that the derivatives are allowed to be distributions, of a precise order less than 0.  While we only deal with the particular case of distributions dual  to certain H\"{o}lder functions, we expect this to be true much more generally.

We first lay out our assumptions on the foliation.  Let $x$ and $y$ denote the coordinates of the first and second factor of a point in $R ^{n_1} \times  R ^{n_2}$.  Suppose $z=\Gamma (x,y)$  is a  bi-H\"{o}lder homeomorphism of an open subset  $O \subset R _x ^{n_1} \times  R _y ^{n_2}$ into $\R ^{n_1 + n_2}$
with the property that $\Gamma$ has  $y$-derivatives of all orders and these derivatives  are  H\"older in $(x,y)$.  We further assume that
for fixed $x$, $\Gamma(x, _-)$  is an immersion on each $ \{x\} \times R _y ^{n_2} $.  Then we call $\Gamma$ a {\em foliation chart}, or more precisely, a {\em  H\"{o}lder   foliation chart with smooth leaves}.   On a manifold, H\"{o}lder    foliations ${\cal F}$ with smooth leaves  are defined by patching foliation charts.  If ${\cal F}$ can be defined by using smooth  foliation charts $\Gamma$, we call ${\cal F}$ {\em smooth}.
  Note that the $x \times R^{n_2}$ for $x \in \R ^{n_1}$ define a smooth foliation ${\cal Y}$ of $\R ^{n_1 + n_2}$.

We will further assume ${\cal F}$ is {\em  strongly absolutely continuous}, i.e. there is a continuous function $J(x,y) >0$ such that  all $y$-derivatives of $J$ exist and are H\"{o}lder in $x$ and $y$ and such that  for any compactly supported continuous function $u$ on $\Gamma (O)$
\[ \int u(z) dz = \int u(\Gamma(x,y)) J(x,y) dx dy .\]
Note that if a function $u(z)$ has partial derivatives along the foliation ${\cal F}$, then $ u \circ \Gamma (x,y)$ has partial $y$-derivatives.  In addition,  the  dependence of these latter derivatives  on $x$ is continuous or H\"{o}lder if the partial derivatives of $u$ along ${\cal F}$ are continuous or H\"{o}lder.
Thus the partials  $\partial_y^{\beta}(u(\Gamma(x,y))$ are well-defined, and it makes sense to discuss their regularity.

 We will now define derivatives along the foliation ${\cal F}$ on a manifold $M$ defined by foliation charts  $\Gamma$.   Fix a  standard basis for  $R _y ^{n_2}$, parallel translate it over $\R^{n_1 + n_2}$  and consider the push forward under $\Gamma$.
 This defines vector fields $V_j$ tangent to ${\cal F}$ which are smooth along the leaves of ${\cal F}$ and
whose derivatives along ${\cal F}$ of any order are H\"{o}lder  transversely to ${\cal F}$.
We say that a function $f$ has derivatives of order up to $k$ along ${\cal F}$ if for any sequence $V_{j_1}, \ldots, V_{j_k}$ the derivatives $V_{j_1} \ldots V_{j_k} (f)$ exist.  If $M$ is endowed with a Riemannian metric, equivalently we can require the following: consider any smooth vector fields $X_1, \ldots, X_k$ on $M$, and denote their orthogonal projections to the tangent spaces of ${\cal F}$ by $Z_1, \ldots, Z_k$.   Then $f$ has derivatives up to order $k$  along ${\cal F}$ if  the derivatives $Z_1 \ldots Z_k  (f)$ exist.

 \begin{lemma}
 Under the above assumptions, the derivatives of $\Gamma ^{-1}$
 along ${\cal F}$ also H\"{o}lder.
\end{lemma}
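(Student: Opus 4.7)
The plan is to exploit the leaf-wise inverse function theorem together with the observation that the pushforward frame $V_j=\Gamma_*(\partial_{y_j})$ trivializes the computation, reducing the lemma to standard H\"older bookkeeping.

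First I would note that for each fixed $x_0$ the restriction $\Gamma(x_0,\cdot)\colon\mathbb R^{n_2}\to\mathbb R^{n_1+n_2}$ is a smooth immersion, so $\Gamma^{-1}$ restricted to any leaf of $\mathcal F$ is its $C^\infty$ inverse as a map of smooth manifolds. Thus $\Gamma^{-1}$ is smooth along leaves, and the substance of the lemma is the H\"older transverse regularity of the leaf-derivatives. In the pushforward frame $V_j(z)=\partial_{y_j}\Gamma|_{\Gamma^{-1}(z)}$ — which is smooth along leaves and H\"older in $z$ by the hypothesis on $\Gamma$, and which spans $T\mathcal F$ by the immersion condition — a direct differentiation of $\Gamma^{-1}\circ\Gamma=\mathrm{id}$ in the $y$-direction gives $V_j(\Gamma^{-1})(z)=(0,e_j)$, so iterated leaf-derivatives of $\Gamma^{-1}$ in this frame are constants or zero, hence trivially H\"older.

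To pass to the definition of leaf-derivatives used in the paper — iterated applications of orthogonal projections $Z_j$ of smooth ambient vector fields onto $T\mathcal F$ — I would expand $Z_i=\sum_j c_{ij}(z)V_j$ by inverting the Gram matrix $(\langle V_a, V_b\rangle(z))$, which is nonsingular by the immersion condition. Cramer's rule then shows that the $c_{ij}$ inherit the regularity of the $V_j$: smooth along leaves and H\"older transversely. Iteratively expanding $Z_{i_1}\cdots Z_{i_k}(\Gamma^{-1})$ in the $V$-frame produces a finite sum of products of $c_{ij}$'s, their leaf-derivatives, and the constants $V_j(\Gamma^{-1})$, all of which are H\"older on $\Gamma(O)$.

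The only real obstacle is the inductive bookkeeping: one must check that each successive leaf-differentiation stays inside the class of leaf-smooth, transverse-H\"older functions. This is almost immediate from the hypothesis that all $y$-derivatives of $\Gamma$ are H\"older in $(x,y)$, the one nontrivial point being that inversion of the uniformly nondegenerate Gram matrix preserves this regularity class, which is standard.
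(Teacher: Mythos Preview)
Your argument is correct and in essence coincides with the paper's one-line proof, which simply invokes ``the standard formulas for differentiating the inverse of immersions, and the assumptions on H\"olderness of $\Gamma$ and its derivatives along~$\mathcal Y$.'' Your observation that $V_j(\Gamma^{-1})=(0,e_j)$ in the pushforward frame is exactly the content of that inverse-function/chain-rule computation, made explicit; the paper leaves this implicit.

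One minor remark: the paper takes the $V_j$-frame as the \emph{primary} definition of leaf-derivatives (the $Z_j$-version via orthogonal projection of smooth ambient fields is stated only as an equivalent reformulation), so your second paragraph already finishes the proof as the lemma is stated. The Gram-matrix discussion for the $Z_j$-frame is correct but not required here; it is, however, a clean way to verify the claimed equivalence of the two definitions, and your point that Cramer's rule preserves the leaf-smooth/transverse-H\"older class is exactly the right justification.
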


\begin{proof}  This follows from the standard formulas for differentiating the inverse of immersions, and the assumptions on H\"{o}lderness of $\Gamma$ and its derivatives along ${\cal Y}$.
Note that the correspondence of the H\"{o}lder   coefficients, while  complicated, is explicit.
\end{proof}

In our main theorem below, we will allow  the H\"{o}lder exponents of the higher order derivatives of both $\Gamma$ and $J$ to get worse with the order.
 In the following we will use a fixed non-increasing sequence $\alpha _k$ such that all ${\cal Y}$ or ${\cal F}$ derivatives of both $\Gamma$, $\Gamma ^{-1}$ and $J$ of order at most $k$ are H\"{o}lder with H\"{o}lder exponent $\alpha _k$. This is possible by the last lemma.    Note that the vectorfields $V_j$ defined above and their  derivatives along ${\cal F}$ up to order $k$ depend $\alpha _k$-H\"{o}lder  transversely to ${\cal F}$.

Fix a Riemannian metric on $M$.  Next, we introduce the space $C^{\alpha,k} _{\cal F}$ of compactly supported $\alpha $-H\"{o}lder functions on $M$ which in addition have derivatives along ${\cal F}$ of  all orders $\leq k$ and all such derivatives  are $\alpha $-H\"{o}lder  as functions on $M$.   Then  $C^{\alpha,k} _{\cal F}$ is a Banach space
 with the norm given by  the finite sequence of
$\alpha$-H\"{o}lder norms of the  derivatives along ${\cal F}$ of order
$\leq k$.  If $M$ is compact, the norm is independent of the Riemannian metric chosen up to bi-Lipschitz equivalence.
Note that $C^{\alpha,k} _{\cal F}$ is closed under multiplication.
We let $(C^{\alpha,k} _{\cal F})^*$ be the dual space to $C^{\alpha,k} _{\cal F}$.  Note that any compactly supported smooth function on $M$ naturally belongs to any $C^{\alpha,k} _{\cal F}$.  Hence any element in $(C^{\alpha,k} _{\cal F})^*$ defines a distribution on smooth functions on $M$.  Alternatively, $(C^{\alpha,k} _{\cal F})^*$ is the space of distributions (dual to smooth functions) which extend to continuous
 linear functionals on  $C^{\alpha,k} _{\cal F}$.
 As for notation, we will also write the pairing  $D(\phi) = \langle D, \phi \rangle$ for $D \in (C^{\alpha,k} _{\cal F})^*$ and $\phi \in C^{\alpha,k} _{\cal F}$.
 All of these notions apply to the special case of ${\cal F} = {\cal Y}$.

We will work with a foliation chart $\Gamma$ and use the above notation for the case $M= \Gamma (O)$.

\begin{lemma}  \label{lemma:pullback}
Under composition with $\Gamma$, functions in $C^{\alpha,k} _{\cal F}$ pull back to  functions in $C^{\alpha \alpha _k,k} _{\cal Y}$.
Conversely,    functions in $C^{\beta ,k} _{\cal Y}$ pull back to functions in $C^{\beta \alpha _k,k} _{\cal F}$ under composition with $\Gamma ^{-1}$.   In consequence, we can also pull back distributions in $(C^{\beta \alpha _k ,k} _{\cal F}) ^*$ by $\Gamma$ to get distributions in $(C^{\beta ,k} _{\cal Y}) ^*$.
\end{lemma}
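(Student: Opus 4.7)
The plan is to reduce both pullback statements to the chain-rule identity
\begin{equation*}
\partial_y^\beta (f \circ \Gamma)(x,y) = (V^\beta f)(\Gamma(x,y))
\end{equation*}
for multi-indices $|\beta| \leq k$, and then to read off the H\"older exponents using the elementary composition rule for H\"older maps. I would first verify the identity by induction on $|\beta|$. The base case is immediate from the definition of $V_j$ as the push-forward $\Gamma_* \partial_{y_j}$: the ordinary chain rule yields $\partial_{y_j}(f \circ \Gamma)(x,y) = df|_{\Gamma(x,y)}(\partial_{y_j}\Gamma) = (V_j f)(\Gamma(x,y))$. The inductive step uses that the $\partial_{y_j}$'s commute and that so do the $V_j$'s (leaf-wise, which is all we need, being push-forwards of commuting coordinate fields by a leaf-wise diffeomorphism). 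The analogous identity $V^\beta(g\circ\Gamma^{-1}) = (\partial_y^\beta g)\circ\Gamma^{-1}$ follows by the same argument applied to $\Gamma^{-1}$, using the preceding lemma to guarantee the required regularity of its ${\cal F}$-derivatives.

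Granted the identity, the first claim is almost automatic: for $f \in C^{\alpha,k}_{\cal F}$ and $|\beta| \leq k$, the hypothesis says $V^\beta f$ is $\alpha$-H\"older on $M$; composing with the $\alpha_0$-H\"older map $\Gamma$ (which is a fortiori $\alpha_k$-H\"older, since $\alpha_k \le \alpha_0$) gives an $\alpha \alpha_k$-H\"older function on $O$. By the chain-rule identity this is exactly $\partial_y^\beta(f\circ\Gamma)$, so $f\circ\Gamma \in C^{\alpha\alpha_k,k}_{\cal Y}$ with a linear bound on the norms. The converse statement follows from the same argument with $\Gamma$ and $\Gamma^{-1}$ exchanged.

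For the distributional claim I would define the pullback by $\langle \Gamma^* D, \phi \rangle := \langle D, (J^{-1}\phi)\circ\Gamma^{-1}\rangle$, the factor $J^{-1}$ being the natural one required by the strong absolute continuity of ${\cal F}$ (one could instead drop it and use the naive pullback). The second statement just proved, together with closure of the relevant H\"older spaces under multiplication and the regularity of $J$ (hence of $J^{-1}$), shows $(J^{-1}\phi)\circ\Gamma^{-1} \in C^{\beta\alpha_k,k}_{\cal F}$ whenever $\phi \in C^{\beta,k}_{\cal Y}$, with a linear norm estimate. Hence the right-hand side is well-defined and continuous in $\phi$, giving $\Gamma^* D \in (C^{\beta,k}_{\cal Y})^*$.

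The only real difficulty will be bookkeeping: carefully tracking H\"older exponents through every composition and product rule, so that all accumulated losses are absorbed into the single factor $\alpha_k$ appearing in the statement. The one conceptual input is the identification $\partial_y^\beta(f\circ\Gamma)=(V^\beta f)\circ\Gamma$, which transfers regularity information between the two coordinate systems without any appeal to the nonexistent transverse smoothness of $\Gamma$.
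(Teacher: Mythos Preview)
Your proposal is correct and follows the same approach as the paper, which dispenses with the lemma in two sentences: H\"older exponents multiply under composition and are preserved under addition and multiplication, and the distributional claim follows by taking duals (the paper's pullback of distributions is simply push-forward by $\Gamma^{-1}$, i.e.\ your ``naive'' alternative). Your chain-rule identity $\partial_y^\beta(f\circ\Gamma)=(V^\beta f)\circ\Gamma$ is precisely the mechanism behind that sentence, written out explicitly.
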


\begin{proof}   Both assertions are standard, and   follow simply from the fact that  H\"{o}lder exponents multiply under composition, and don't change under addition and multiplication.
The last statement is obtained by taking duals.
 The pull back for distributions means push
forward by $\Gamma ^{-1}$.
\end{proof}

 Now we  define distributional derivatives.  Let us first consider partial derivatives  along $y$-directions for the ${\cal Y}$ foliation.
 These are the derivatives we will use in the proof of the main theorem below.
Fix a  standard basis for  $R _y ^{n_2}$, parallel translate it over $\R^{n_1 + n_2}$.
Then  the $\frac{\partial}{\partial y_i}$ derivative of a distribution $D \in (C^{\alpha,k}_{\cal Y})^*$ is defined by
evaluating on $h \in C^{\alpha,k+1} _{\cal F}$ via
\begin{align} \label{equation:der}
 \langle \frac{\partial}{\partial y_i}(D), h \rangle = - \langle D, \frac{\partial}{\partial y_i} (h) \rangle.
 \end{align}
Note that  $\frac{\partial}{\partial y_i} (D)$ is only defined on         $C^{\alpha,k+1}_{\cal Y}$, and hence,    $\frac{\partial}{\partial y_i}(D) \in (C^{\alpha,k+1}_{\cal Y})^*$.

Similarly, we  define distributional derivatives along ${\cal F}$.  Fix a  standard basis for  $R _y ^{n_2}$, parallel translate it over $\R^{n_1 + n_2}$  and consider the push forward under $\Gamma$.  This defines vector fields $V_j$ tangent to ${\cal F}$ which are smooth along the leaves of ${\cal F}$ and
 whose derivatives along ${\cal F}$ of order up to $k$ depend
$\alpha _k$-H\"{o}lder transversely for $\alpha _k$ as above.
 Assume in the following  that $\alpha \leq \alpha _k$.   Indeed the $V_i (h)$ involve the coefficients of $\Gamma$, and this  assumption will insure that taking derivatives along the $V_j$ does not affect H\"{o}lder exponents.   More precisely we have  $V_i(h) \in  C^{\alpha,k} _{\cal F}$ for $h \in C^{\alpha,k+1} _{\cal F}$ as the $V_i$ are $\alpha$-H\"{o}lder by assumption on $\alpha$.  Hence we can define  the derivative of a distribution $D \in (C^{\alpha,k}_{\cal F})^*$ by evaluating on $h \in C^{\alpha,k+1} _{\cal F}$ via
\begin{align} \label{equation:derivative}
\langle V_i (D), h \rangle = - \langle D, V_i (h) \rangle.
\end{align}
Note that  $V_i(D)$ is only defined on         $C^{\alpha,k+1}_{\cal F}$, and hence,    $V_i(D) \in (C^{\alpha,k+1}_{\cal F})^*$.

Note that pulling back derivatives $V_j (D)$ gives us $\frac{\partial}{\partial y_j}$ derivates of the pull back of $D$ on the appropriate function spaces.

Further define  $g D$ for $g \in C^{\alpha,k}_{\cal F}$ and $D \in (C^{\alpha,k} _{\cal F})^*$ by evaluating on a test function $\phi \in C^{\alpha,k}_{\cal F}$ by
\begin{align}\label{equation:product}
(g D) (\phi)= \langle g D, \phi \rangle = \langle D , g \phi \rangle
. \end{align}

We conclude that $g D \in (C^{\alpha,k} _{\cal F})^*$.
If  $D$ is given by integration against a compactly supported $L^1$- function $u$, then $g D$ is given by integrating against $g u$.

\begin{Lemma}
\label{Lemma:Leibniz}
Let  $\alpha \leq \alpha _k$ and suppose that  $g \in C^{\alpha,k+1}_{\cal F}$, and $D \in (C^{\alpha,k} _{\cal F})^*$.  Then $V_i (g \: D) = V_i (g) \: D + g V_i (D) $ holds true in  $(C^{\alpha,k+1} _{\cal F})^*$, i.e. as functionals on
$C^{\alpha,k+1}_{\cal F} $.
\end{Lemma}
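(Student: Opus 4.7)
The plan is to unpack the definitions \eqref{equation:derivative} and \eqref{equation:product}, then reduce the identity to the pointwise (classical) Leibniz rule $V_i(g\phi) = V_i(g)\phi + g\, V_i(\phi)$, which is valid because both $g$ and any test function $\phi \in C^{\alpha,k+1}_{\cal F}$ have at least one derivative along ${\cal F}$.

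First I would check that every expression in the claimed identity lives in the right space, so that the pairings below make sense. Since $g \in C^{\alpha,k+1}_{\cal F}$, we have $V_i(g) \in C^{\alpha,k}_{\cal F}$ (here the hypothesis $\alpha \le \alpha_k$ is used to guarantee the coefficients of $V_i$ do not spoil the H\"older exponent). Because $C^{\alpha,k}_{\cal F}$ and $C^{\alpha,k+1}_{\cal F}$ are closed under multiplication, the products $g\phi \in C^{\alpha,k+1}_{\cal F}$ and $V_i(g)\phi \in C^{\alpha,k}_{\cal F}$ for any $\phi \in C^{\alpha,k+1}_{\cal F}$, so $gV_i(D)$, $V_i(g)D$ and $V_i(gD)$ all define elements of $(C^{\alpha,k+1}_{\cal F})^*$.

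Now fix a test function $\phi \in C^{\alpha,k+1}_{\cal F}$. Apply \eqref{equation:derivative} and \eqref{equation:product} to get
\begin{equation*}
\langle V_i(gD), \phi\rangle = -\langle gD, V_i(\phi)\rangle = -\langle D, g\,V_i(\phi)\rangle.
\end{equation*}
Apply the classical Leibniz rule to rewrite $g\,V_i(\phi) = V_i(g\phi) - V_i(g)\phi$ inside the pairing (this is legitimate because $g,\phi$ are genuine functions of class $C^1$ along ${\cal F}$, so this identity holds pointwise). This yields
\begin{equation*}
\langle V_i(gD), \phi\rangle = -\langle D, V_i(g\phi)\rangle + \langle D, V_i(g)\phi\rangle.
\end{equation*}
The first term equals $\langle V_i(D), g\phi\rangle = \langle g\,V_i(D), \phi\rangle$ by \eqref{equation:derivative} and \eqref{equation:product}, while the second equals $\langle V_i(g)\,D, \phi\rangle$ by \eqref{equation:product}. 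Since $\phi$ was arbitrary, this gives the asserted equality in $(C^{\alpha,k+1}_{\cal F})^*$.

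The only subtle point, and the one I would verify most carefully, is the bookkeeping on H\"older exponents: one has to confirm that $g\,V_i(\phi)$, $V_i(g\phi)$, and $V_i(g)\phi$ all lie in $C^{\alpha,k}_{\cal F}$ so that $D$ can be paired against them. This is where the standing hypothesis $\alpha \le \alpha_k$ enters, since the coefficients of $V_i$ in a foliation chart are only $\alpha_k$-H\"older transversely and differentiation along ${\cal F}$ therefore cannot improve the H\"older exponent beyond $\alpha_k$. Once this verification is in place, the computation above is purely formal.
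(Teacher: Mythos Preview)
Your proof is correct and follows essentially the same route as the paper: both evaluate on a test function $\phi \in C^{\alpha,k+1}_{\cal F}$, unfold the definitions \eqref{equation:derivative} and \eqref{equation:product}, and apply the pointwise Leibniz rule $g\,V_i(\phi)=V_i(g\phi)-V_i(g)\phi$. Your explicit verification that each intermediate expression lies in $C^{\alpha,k}_{\cal F}$ (so that pairing with $D$ is legitimate) matches the paper's remark following the lemma that $\langle D,(V_ig)\phi\rangle$ requires $g\in C^{\alpha,k+1}_{\cal F}$.
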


\begin{proof}   We check this by evaluating both sides on $\phi \in C^{\alpha,k+1}_{\cal F}$:
\begin{align}
  \langle V_i (g \: D),\phi \rangle = -  \langle g \: D, V_i \phi \rangle= - \langle D ,g \: (V_i \phi) \rangle= - \langle D, V_i (g \phi)  - (V_i g) \phi \rangle= \notag\\
                 \langle D,  (V_i g) \phi \rangle - \langle D, V_i (g \phi) \rangle
=   \langle (V_i g) D, \phi \rangle     +   \langle V_i D, g\: \phi \rangle
=\langle (V_i g )D, \phi \rangle + \langle g (V_i D), \phi \rangle. \notag
\end{align}
\end{proof}

{\em Note:}  The inner product $ \langle D,  (V_i g) \phi \rangle$ is not defined unless
$g \in C^{\alpha,k+1}_{\cal F}$.  Thus we need the higher regularity on  $g$ in the hypothesis of
the previous lemma.  This simple problem caused the introduction of the spaces of test functions
$C^{\alpha,k}_{\cal F}$.

Let  $u$ be an $L^1$ function defined on a neighborhood of a point $z_0$.
A vector $\zeta _0$ is called {\em not singular}  for $u$ at $z_0$  if
there exist an open set $\mathcal U \ni  z_0$ and an open cone
$\mathcal Z \subset \R^n \setminus\{0\}$ around $\zeta _0$ such
that for any positive integer $N$ and any $C^{\infty}$ function $\chi$
with support in $\mathcal U$ there exists a constant $C=C(N,\chi)$ so that
\begin{equation}
\label{WF}
|\widehat{\chi u}\, (\zeta)|= \big| \int u(z) \chi (z) \exp(-iz \cdot \zeta)  dz \big|
\leq C|\zeta|^{-N} \quad \text{ for all } \zeta \in \mathcal Z \text{ with }
|\zeta|>1.
\end{equation}
Otherwise, $\zeta _0$ is called {\em singular} for $u$ at $z_0$.
The {\em wave front set} $WF(u)$ is defined as the set of all
$(z_0,\zeta _0)$ such that $\zeta _0 $ is singular for $u$ at $z_0$.

\begin{theorem}
\label{theorem:rauchlike} Suppose that $u(z)$ is an $L^1$ function.
Let  ${\cal F}$ be a H\"{o}lder foliation with smooth leaves which is
also strongly absolutely continuous.  Consider  the distribution $D$
defined by integration against $u (z)$.
Assume  that any
derivative of $D$ along ${\cal F}$ of any order
belongs to $(C^{\alpha}_{\cal F})^*$ for all positive $\alpha$.
If $(z_0,\zeta _0) \in T^*(\R^n)\backslash{0}$ is not conormal to
$\mathcal F$ then
$$(z_0, \zeta _0) \notin WF(u).$$
\end{theorem}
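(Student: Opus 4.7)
The plan is a non-stationary phase argument in the leaf directions, carried out in a foliation chart $\Gamma:O\subset \mathbb R^{n_1+n_2}\to M$ at $z_0=\Gamma(x_0,y_0)$. First I would translate non-conormality of $\zeta_0$ at $z_0$ into the statement that $\Xi_0:=\partial_y\Gamma(x_0,y_0)^T\zeta_0 \neq 0$ in $\mathbb R^{n_2}$; continuity of $\partial_y\Gamma$ then yields a neighborhood $\mathcal U$ of $(x_0,y_0)$ and an open cone $\mathcal Z$ around $\zeta_0$ on which $|\Xi(x,y,\zeta)|\geq c|\zeta|$ for $\Xi(x,y,\zeta):=\partial_y\Gamma(x,y)^T\zeta$. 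For any cutoff $\chi\in C_c^\infty(\Gamma(\mathcal U))$ the change of variables with density $J$ yields
\[
\widehat{\chi u}(\zeta)=\int \tilde\chi\,\tilde D\, e^{-i\Phi(x,y,\zeta)}\,dx\,dy,\qquad \tilde\chi=\chi\circ\Gamma,\ \tilde D=(u\circ\Gamma)J,\ \Phi=\Gamma\cdot\zeta,
\]
and by Lemma~\ref{lemma:pullback} together with the remark on pullback of the $V_j$-derivatives, the hypothesis translates to the statement that $\partial_y^\beta\tilde D\in(C^{\alpha'}_{\mathcal Y})^*$ for every multi-index $\beta$ and every $\alpha'>0$.

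Next I would introduce the standard non-stationary phase operator $L=\tfrac{i}{|\Xi|^2}\Xi\cdot\nabla_y$, which satisfies $Le^{-i\Phi}=e^{-i\Phi}$. The plan is to iterate the identity $\int \tilde\chi\tilde D\, e^{-i\Phi}\,dx\,dy = \int \tilde\chi\tilde D\, L^N e^{-i\Phi}\,dx\,dy$ and move the derivatives onto $\tilde\chi\tilde D$ via distributional integration by parts, yielding
\[
\widehat{\chi u}(\zeta) = \bigl\langle (L^t)^N(\tilde\chi\tilde D),\,e^{-i\Phi}\bigr\rangle
\]
for every $N\in\mathbb N$, where $L^t\psi=-\nabla_y\cdot\bigl((i\Xi/|\Xi|^2)\psi\bigr)$. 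Each application of $L^t$ introduces first-order $y$-derivatives and coefficients polynomial in $\Xi,\nabla_y\Xi,\ldots$ divided by powers of $|\Xi|$; a direct scaling check shows that $(L^t)^N$ is a sum of terms of the form $c(x,y,\zeta)\,\partial_y^\gamma$ with $|\gamma|\leq N$ and $\|c\|_{C^{\alpha'}}\lesssim|\zeta|^{-N}$ on $\mathcal U\times\mathcal Z$, provided $\alpha'$ is taken small enough that the H\"older regularity of $\Gamma$ and its first $N+1$ $y$-derivatives suffices to define $c$ as an element of $C^{\alpha'}$.

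Finally, expanding by the Leibniz rule (cf.\ Lemma~\ref{Lemma:Leibniz} and the product \eqref{equation:product}), $(L^t)^N(\tilde\chi\tilde D)$ is a finite sum of terms $c\cdot(\partial_y^{\gamma_1}\tilde\chi)\cdot\partial_y^{\gamma_2}\tilde D$, each a compactly supported distribution in $(C^{\alpha'}_{\mathcal Y})^*$ of norm $\lesssim |\zeta|^{-N}$ by the hypothesis on $\tilde D$. Since the $C^{\alpha'}$-norm of $e^{-i\Phi}$ on the compact set $\mathcal U$ grows at most polynomially in $|\zeta|$ with an exponent strictly less than one (depending on the H\"older exponent of $\Gamma$), the pairing produces
\[
|\widehat{\chi u}(\zeta)|\lesssim (1+|\zeta|)^{1 -N}\quad\text{for}\quad \zeta\in\mathcal Z.
\]
Taking $N$ arbitrarily large yields the rapid decay required by \eqref{WF}, whence $(z_0,\zeta_0)\notin WF(u)$.

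The main obstacle is the careful distributional bookkeeping. One must verify at each step that the product of the H\"older coefficients arising from $L^t$ with the distributional $y$-derivatives of $\tilde D$ stays in the appropriate dual H\"older space, that the final pairing with the merely H\"older test function $e^{-i\Phi}$ extends continuously from the smooth case, and that the scaling in $|\zeta|$ is really tracked correctly through the $N$ iterations. A secondary subtlety is that the H\"older exponent available for the coefficients of $(L^t)^N$ decreases with $N$, since higher $y$-derivatives of $\Gamma$ and $J$ are only guaranteed to have smaller positive H\"older exponents; it is precisely because the hypothesis supplies $\partial_y^\beta\tilde D\in(C^{\alpha'}_{\mathcal Y})^*$ for \emph{every} $\alpha'>0$ that this loss is harmless.
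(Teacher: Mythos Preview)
Your proposal is correct and follows essentially the same non-stationary phase argument as the paper's proof: change variables via the foliation chart, use the operator that reproduces $e^{-i\Phi}$, iterate $N$ times, and transfer the $y$-derivatives onto the distribution via the Leibniz rule, tracking $|\zeta|^{-N}$ decay of the coefficients against at most linear growth of the H\"older norm of $e^{-i\Phi}$. The only cosmetic differences are that you use the full leafwise gradient operator $L=\tfrac{i}{|\Xi|^2}\Xi\cdot\nabla_y$ where the paper relabels coordinates to work with a single direction $\tfrac{1}{i\,\partial_{y_1}\phi}\,\partial_{y_1}$, and you invoke the sharper sublinear bound $\|e^{-i\Phi}\|_{C^{\alpha'}}\lesssim |\zeta|^{\alpha'/\beta}$ where the paper is content with the crude estimate $\lesssim |\zeta|$; since $N$ is arbitrary, either bound suffices.
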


As an immediate corollary, we obtain the result needed in
Section \ref{soln-cohomology}.

\begin{corollary} \label{what we need}
Let  ${\cal F}_1, \ldots, {\cal F}_r$ be H\"{o}lder   foliations with smooth leaves on a  manifold $M$ which are also strongly absolutely continuous.  Assume in addition that the tangent spaces to these foliations span the tangent spaces to $M$ at all points.

Now suppose that $u(z)$ is an $L^1$ function.  Consider  the distribution $D$ defined by integration against $u (z) $.   Assume that any derivative of $D$ of any order along  any ${\cal F}_i, i= 1, \ldots, r$ belongs to $(C^{\alpha}_{{\cal F}_i})^*$ for all $1 \leq i \leq r$ and all positive $\alpha$.
Then $u$ is $\Ci$.
\end{corollary}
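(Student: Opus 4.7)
The plan is to apply Theorem \ref{theorem:rauchlike} once to each foliation ${\cal F}_i$ and then combine the resulting constraints on the wavefront set. Working in a foliation chart for ${\cal F}_i$ around an arbitrary point $z_0 \in M$, the hypothesis that every derivative of $D$ of every order along ${\cal F}_i$ lies in $(C^{\alpha}_{{\cal F}_i})^*$ for every $\alpha>0$ is precisely what Theorem \ref{theorem:rauchlike} requires. The conclusion is that no covector $(z_0,\zeta_0)$ which fails to be conormal to ${\cal F}_i$ can belong to $WF(u)$. Equivalently, if we let $N^*{\cal F}_i \subset T^*M$ denote the conormal bundle of ${\cal F}_i$, then
$$
   WF(u) \subset N^*{\cal F}_i.
$$

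Since the wavefront set is an intrinsic subset of $T^*M \setminus 0$ and the inclusion above holds for every $i = 1, \dots, r$, we obtain
$$
   WF(u) \subset \bigcap_{i=1}^r N^*{\cal F}_i.
$$
A covector $\zeta_0 \in T_{z_0}^*M$ lies in this intersection precisely when it annihilates $T_{z_0}{\cal F}_i$ for every $i$. By hypothesis the subspaces $T_{z_0}{\cal F}_i$ span $T_{z_0}M$, so $\zeta_0$ annihilates all of $T_{z_0}M$, forcing $\zeta_0 = 0$. Because $WF(u)$ by definition omits the zero section, we conclude $WF(u) = \emptyset$.

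Finally, it is standard that a distribution with empty wavefront set is $C^{\infty}$: indeed, for every point $z_0$ and every direction $\zeta_0$, the estimate \eqref{WF} holds, so for any smooth cutoff $\chi$ supported near $z_0$ the Fourier transform $\widehat{\chi u}$ decays faster than any polynomial in every direction by a compactness argument on the unit sphere of $\zeta$'s; hence $\chi u$ is Schwartz and $u$ is smooth near $z_0$. Applied to the $L^1$ function $u$ this gives the desired conclusion.

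The substantive obstacle was already handled in Theorem \ref{theorem:rauchlike}; the only work here is the linear-algebraic observation that the intersection of the conormal bundles of a spanning family of foliations is the zero section, together with the standard fact that an empty wavefront set means smoothness.
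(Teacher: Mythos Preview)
Your proof is correct and follows essentially the same approach as the paper: apply Theorem \ref{theorem:rauchlike} to each foliation to confine $WF(u)$ to the intersection of the conormal bundles, observe that the spanning hypothesis forces this intersection to be the zero section, and invoke the standard fact that an empty wavefront set implies smoothness. The paper's proof is terser but the logic is identical.
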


\begin{proof} Since the tangent spaces to the foliations span the tangent bundle everywhere, no vector $\zeta \neq 0$ can be conormal to all ${\cal F}_i$.
 Now it follows from Theorem \ref{theorem:main}  that $WF(u)$ is empty
 and hence $u$ is smooth by e.g. \cite[Section 8.1]{Hormander}.
\end{proof}

The main idea in the proof of Theorem \ref{theorem:rauchlike} is a simple
generalization of an argument of Rauch and Taylor in \cite{RT}.
However much more care has to be taken to make sure that various
operations undertaken are well defined and allowed.  In particular,
we use integration by parts for derivatives along the foliation.
This requires that the test functions in question are differentiable
along ${\cal F}$ up to a suitable order.  This led to the definition of
 the function spaces above.

{\em Remark:} The proof of Theorem \ref{theorem:main} becomes
easier if the foliation ${\cal F}$ has derivatives of all orders  of a fixed
H\"{o}lder class and the distribution in question together with its
derivatives along ${\cal F}$ are dual to a fixed H\"{o}lder class.

\begin{proof}
We fix $(z_0,\zeta _0)$ which in not conormal to $\cal F$.
By the definition of the wave front set it suffices to show that there
exist an open set $\mathcal U \ni  z_0$ and an open cone
$\mathcal Z \subset \R^n \setminus\{0\}$ around $\zeta _0$ such
that for any $N>0$ and any $\chi \in C^{\infty}_0(\mathcal U)$
there exists a constant $C$ so that
\begin{equation}
\label{equation:toshow}
| \widehat{\chi u}\, (\zeta)|=\big| \int u(z) \chi (z) \exp(-iz \cdot \zeta) dz \big|
\leq C|\zeta|^{-N} \quad \text{ for all } \zeta \in \mathcal Z
\text{ with } |\zeta|>1.
\end{equation}
We define
$$\phi(x,y,\zeta)= -\Gamma(x,y)\cdot \zeta,$$
and note that, for a fixed $\zeta$, the function $\phi$ is in ${\mathcal C} ^{\alpha _k,k}$ for all $k$ by the choice of $\alpha _k$.
 Using a foliation
chart and the strong absolute continuity of $\cal F$ we can write
$$
\widehat{\chi u}\, (\zeta) = \int u(\Gamma(x,y)) \:
\chi (\Gamma (x,y)) J(x,y)\exp(i\phi(x,y,\zeta)dxdy.
$$
The hypotheses that $(z_0, \zeta _0)$ is not conormal to $\mathcal F$
implies that
$$ d_y\phi(x,y, \zeta _0) \neq 0, \text{  where  } \Gamma(x,y)=z.$$
Relabeling the $y$ coordinates it follows that there exist a
neighborhood $\mathcal U$ of $z _0$, an open cone
$\mathcal Z \subset \R^n \setminus\{0\}$ around $\zeta _0$, and
$\delta >0$ so that

\begin{equation}
\label{equation:togetdecay}
\left| \frac{\partial\phi(x,y,\zeta)}{\partial y_1} \right| > \delta |\zeta|
\text{,  when   }(\Gamma(x,y),\zeta) \in  \mathcal U \times \mathcal Z
\end{equation}
To obtain the desired decay in $\zeta$ we use the identity
$$\left( \frac{1}{i\partial\phi(x,y,\zeta)/\partial y_1}\frac{\partial}{\partial
y_1}\right) \exp(i\phi(x,y,\zeta)= \exp(i\phi(x,y,\zeta))$$
to deduce that
\begin{equation}
\label{equation:beforeexpand}
\widehat{\chi u} (\zeta)=
\int u(\Gamma(x,y))\chi(\Gamma(x,y))J(x,y)\left( \frac{1}{i\partial\phi(x,y,\zeta)/\partial y_1}\frac{\partial}{\partial
y_1}\right) ^N\exp(i\phi(x,y,\zeta)dxdy.
\end{equation}
We can expand
\begin{equation}
\label{equation:expand}
 \bigg(\frac{1}{i\partial\phi(x,y,\zeta)/\partial y_1}\frac{\partial}{\partial
y_1}
\bigg)^N
\ =\
\sum_{m=1}^N
\psi_{m,N}(x,y,\zeta)
\
\bigg(\frac{\partial}{\partial y_1}\bigg)^m.
\end{equation}

To describe functions $\psi_{m,N}(x,y,\zeta)$ we note that
$(g\frac{\partial}{\partial y_1})^N$ is a sum of terms  of the form
$P_m(\frac{\partial}{\partial y_1})^m$, where $P_m$ is a polynomial
in $g$ and its first $(N-m)$ derivatives. Applying this to
$g=\frac{1}{i\partial\phi(x,y,\zeta)/\partial y_1}$, we see that
each function $\psi_{m,N}(x,y,\zeta)$ is a quotient of a polynomial
in $\Gamma(x,y)\cdot \zeta$ and its first $(N-m+1)$ derivatives divided
by a power of ${i\partial\phi(x,y,\zeta)/\partial y_1}$. Taking $k$
derivatives of $\psi_{m,N}$ yields, by the product and quotient rules,
a similar expression which involves derivatives of $\Gamma(x,y)$
of order $(N-m+1+k)$ and hence is H\"older with exponent
$\a _{(k+N-m+1)}$. It follows that, for a fixed $\zeta$ and any $m=1 ,..., N$,
the function $\psi_{m,N}(x,y,\zeta)$ is in
$C^{\alpha_{(N+1)},\, m}_{\mathcal Y}$. Moreover, there exists a
constant $C$ such that
\begin{equation}
\label{equation:moreondecay}
\| \psi_{m,N}
\|_{\alpha_{(N+1)},m} \le \, C \,|\zeta|^{-N}
\quad \text{ for all } \zeta \in \mathcal Z \text{ with } |\zeta|>1.
\end{equation}
Indeed, since $\phi(x,y,\zeta)$ is linear in $\zeta$, both
sides of \eqref{equation:expand} are homogeneous of degree $-N$
in $\zeta$, and hence so are the functions $\psi_{m,N}$ and their
derivatives. We conclude that the functions in
\eqref{equation:moreondecay} are rational functions in $\zeta$ of
homogeneous degree $-N$ whose coefficients, as functions of $(x,y)$,
are  H\"older  on $\Gamma^{-1} (\mathcal U)$.
The H\"older norms of these coefficients are continuous in
$\zeta$ and hence are uniformly bounded on
$\mathcal Z \cap \{ |\zeta|=1 \}$. Finally, using equation
\eqref{equation:togetdecay} we can
bound the denominators away from zero and obtain
\eqref{equation:moreondecay}.
\vskip.3cm

Using \eqref{equation:beforeexpand} and \eqref{equation:expand}
we can write $\widehat{\chi u}$ as a finite sum
$$
 \widehat{\chi u} (\zeta)= \sum_{m=1}^N   \int
u(\Gamma(x,y))\,
\chi(\Gamma(x,y))\,
J(x,y)\,
\psi_{m,N}(x,y,\zeta)
\bigg(\frac{\partial}{\partial y_1}\bigg)^m
\exp(i\phi(x,y,\zeta)) dxdy.
$$
In the remainder of the proof we estimate each term of this sum. For this we denote
$$
A=u(\Gamma(x,y))\,\chi(\Gamma(x,y)) \quad \text{and} \quad
A_{m,N} ^{\zeta}=u(\Gamma(x,y))\, \chi(\Gamma(x,y))\, J(x,y)\,\psi_{m,N}(x,y,\zeta)
$$
and view $A$ and $A_{m,N} ^{\zeta}$ as the distributions
given by integration, with a fixed $\zeta$, against the corresponding
functions. Since the functions $u \circ \Gamma, \chi \circ \Gamma, J$ and $\psi _{M,n}$ are in $L^1$, $A$ and
$A_{m,N}^{\zeta}$ lie in
$({\mathcal C}^{\alpha} _{{\cal Y}})^*=({\mathcal C}^{\alpha,0} _{{\cal Y}})^*$
for all positive $\alpha$, and
$A_{m,N}^{\zeta} =  J \psi_{m,N} A$ as elements of
$({\mathcal C}^{\alpha}_{\cal Y})^*$ with multiplication of
distributions defined as in equation \eqref{equation:product}.
Recall that $\phi (x,y,\zeta)$ is in  $(C^{\alpha _{m} ,m}_{\cal Y})$,
so by the definition of derivatives of distributions
for each term in $ \widehat{\chi u} (\zeta)$ we obtain

\begin{align}
\notag
\int   u(\Gamma(x,y))\,    \chi(\Gamma(x,y))\,   J(x,y)\,     \psi_{m,N}(x,y,\zeta) \
\bigg(\frac{\partial}{\partial y_1}\bigg)^m  \exp(i\phi (x,y,\zeta))  \ dxdy  \\
= \langle  A_{m,N} ^{\zeta},\bigg(\ \bigg(\frac{\partial}{\partial y_1}\bigg)^m  \exp(i\phi (x,y,\zeta)\bigg)\rangle
=  (-1) ^m \langle \bigg(\ \bigg(\frac{\partial}{\partial y_1}\bigg)^m (A_{m,N}^{\zeta}) \bigg),  \exp(i\phi (x,y,\zeta)\rangle  \,  \notag \\
\notag =  (-1) ^m \langle \bigg(\ \bigg(\frac{\partial}{\partial y_1}\bigg)^m (J \psi_{m,N} A) \bigg) ,  \exp(i\phi (x,y,\zeta)\rangle  \,  \notag ,
\end{align}
where the pairing is  in the sense of
$(C^{\alpha _{m} ,m}_{\cal Y})^*$ for  $1 \leq m \leq N$.
Now we apply the Leibniz rule, Lemma \ref{Lemma:Leibniz},
$m$ times to write
$\left(\frac{\partial}{\partial y_1}\right)^m (J \psi_{m,N} A)$
as
\begin{align}   \label{equation:last}
 \bigg(\frac{\partial}{\partial y_1}\bigg)^m (J \psi_{m,N} A) =
 \sum_{a+b+c = m}   K_{a,b,c}
\bigg[\left(\frac{\partial}{\partial y_1}\right)^a
J(x,y)\bigg]\,
\bigg[\left(\frac{\partial}{\partial y_1}\right)^b
(\psi_{m,N})\bigg]\,\bigg[ \left(\frac{\partial}{\partial y_1}\right)^c
A  \bigg]
  \notag.
\end{align}
The equation holds in $(C^{\alpha_{N+1},m}_{\cal Y})^*$ since $A$
is in $({\mathcal C}^{\alpha_{N+1}}_{\cal Y})^*$ and $\psi_{m,N}$
as well as $J$ are in $C^{\alpha_{N+1},m}_{\cal Y}$.
Finally, we can rewrite the pairing in $(C^{\alpha_{N+1},m}_{\cal Y})^*$
of each term in this sum with $\exp(i\phi (x,y,\zeta)$ as

$$
\langle \bigg(\bigg[\left(\frac{\partial}{\partial y_1}\right)^a
J(x,y)\bigg] \cdot
\bigg[\left(\frac{\partial}{\partial y_1}\right)^b
(\psi_{m,N})\bigg] \cdot \bigg[ \left(\frac{\partial}{\partial y_1}\right)^c
A  \bigg] \bigg) ,  \exp(i\phi (x,y,\zeta)\rangle =
$$
\begin{equation}
\label{equation:decayterm}
\langle  \left(\frac{\partial}{\partial y_1}\right)^c
A  \, , \bigg(\bigg[\left(\frac{\partial}{\partial y_1}\right)^a
J(x,y)\bigg] \cdot
\bigg[\left(\frac{\partial}{\partial y_1}\right)^b
(\psi_{m,N})\bigg] \cdot  \exp(i\phi (x,y,\zeta)\bigg)\rangle .
\end {equation}
\vskip.3cm

Now we use the assumption that derivatives of $u$ and hence of
the localization $u \chi$ along ${\mathcal F}$ exist as elements
in $({\mathcal C}^{\alpha}_{\cal F})^*$ for all positive $\alpha$.
Therefore, by Lemma \ref{lemma:pullback}, $y$-derivatives of the
pull back $A=(u \chi)\circ \Gamma$ also exist as elements in
$({\mathcal C}^{\alpha}_{\cal Y})^*$ for all positive $\alpha$.
Hence the pairing in \eqref{equation:decayterm} can be estimated
by $(\alpha_{N+1})$-H\"older norm
of the product  $\big[
\big(\frac{\partial}{\partial y_1}\big)^a J(x,y) \big] \big[
\big(\frac{\partial}{\partial y_1}\big)^b  \: (\psi_{m,N}) \big]
\exp(i\phi (x,y,\zeta))$. As $b,c \le m$, all three functions are
$(\alpha_{N+1})$-H\"older.
Moreover, for all $\zeta \in \mathcal Z$  with $|\zeta|>1$,
 $\| \big( \frac{\partial}{\partial y_1}\big)^a J(x,y) \| _{\alpha_{N+1}}$
is bounded by a fixed constant,
$\| \big(\frac{\partial}{\partial y_1}\big)^b  \: (\psi_{m,N}) \| _{\alpha_{N+1}}
\le \, C \,|\zeta|^{-N}$ by \eqref{equation:moreondecay}, and
the norm $\| \exp(i\phi (x,y,\zeta)) \|_{\alpha_{N+1}}$ can be
estimated by $C' \,|\zeta|$.
We conclude that each pairing in \eqref{equation:decayterm}
can be estimated by $C'' \,|\zeta|^{-N+1}$, and hence the same
estimate holds for $|\widehat{\chi u} (\zeta)|$. Since $N$
is arbitrary, the desired estimate \eqref{equation:toshow} now
follows and shows that any $(z_0,\zeta _0)$ which is not conormal
to $\cal F$ is not the wave front set of $u$.
\end{proof}

\vspace{1cm}

\newcommand{\cf}{{\cal F}}
\newcommand{\cp}{{\cal P}}
\newcommand{\cA}{{\mathcal{A}}}
\newcommand{\cB}{{\mathcal{B}}}
\newcommand{\cC}{{\mathcal{C}}}
\newcommand{\cD}{{\mathcal{D}}}
\newcommand{\cE}{{\mathcal{E}}}
\newcommand{\cL}{{\mathcal{L}}}
\newcommand{\cM}{{\mathcal{M}}}
\newcommand{\cO}{{\mathcal{O}}}
\newcommand{\cR}{{\mathcal{R}}}
\newcommand{\cS}{{\mathcal{S}}}
\newcommand{\cT}{{\mathcal{T}}}
\newcommand{\cU}{{\mathcal{U}}}
\newcommand{\cV}{{\mathcal{V}}}
\newcommand{\bfH}{{\mathbf H}}
\newcommand{\bfK}{{\mathbf K}}
\newcommand{\bfS}{{\mathbf S}}

\newcommand{\F}{{\mathbb F}}
\newcommand{\wh}[1]{{\widehat{#1}}}
\newcommand{\wt}[1]{{\widetilde{#1}}}

\appendix  \label{appendix}
\section{ }
\vspace{.5em}

\begin{center}{\bf A FINITE COVER OF AN EXOTIC NILMANIFOLD IS STANDARD}

\vspace{.7em}
BY JAMES   F. DAVIS
\vspace{1em}
\end{center}

A {\em nilmanifold} is the quotient $G/L$ of a simply connected nilpotent Lie group $G$ by a discrete cocompact subgroup $L$.  Two homeomorphisms $f,g : X \to Y$ are {\em isotopic} if they are homotopic through homeomorphisms.

\begin{theorem}  \label{nil_smoothing}
Let $h : M \to G/L$ be a homeomorphism from a smooth manifold to a nilmanifold of dimension greater than four.  Then there is a finite cover $\widehat{G/L} \to G/L$ so that the induced pullback homeomorphism $\wh M \to \widehat{G/L}$ is isotopic to a diffeomorphism.
\end{theorem}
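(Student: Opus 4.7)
The plan is to combine Kirby--Siebenmann smoothing theory with a cohomological property of finitely generated nilpotent groups under passage to finite-index subgroups. In dimension $n \geq 5$, smoothings of a topological manifold form a torsor over the homotopy set $[X, TOP/O]$, and two smoothings are isotopic iff they agree as classes there (Cerf). Pulling back the standard smooth structure on $G/L$ via $h$ gives a smoothing of $M$, and the difference from the original smoothing of $M$ is a single class $\xi \in [G/L, TOP/O]$. Under a finite cover $p\colon G/L' \to G/L$ this class pulls back to $p^*\xi$, so the problem becomes: find a finite-index $L' \leq L$ with $p^*\xi = 0$.

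The next step is to reduce to finite cohomology. All homotopy groups $\pi_i(TOP/O)$ are finite, involving $\pi_3(TOP/PL) = \mathbb{Z}/2$ together with the Kervaire--Milnor finite groups $\Theta_i$ of homotopy $i$-spheres. Since $G/L = K(L,1)$ is a closed aspherical $n$-manifold, the Postnikov tower of $TOP/O$ presents $[G/L, TOP/O]$ as an iterated sequence of obstructions lying in the finite groups $H^i(L; \pi_i(TOP/O))$ for $1 \leq i \leq n$. To make $\xi$ null-homotopic it therefore suffices to kill finitely many cohomology classes with finite abelian coefficients, one after another, each at the cost of a further finite cover.

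The heart of the argument will be the following \emph{Key Lemma}: for a finitely generated torsion-free nilpotent group $L$, a finite abelian group $A$, and $i \geq 1$, every class $\alpha \in H^i(L; A)$ is killed under restriction to some finite-index subgroup of $L$. I would prove this by induction on the nilpotency class of $L$. For the base case $L \cong \mathbb{Z}^r$ abelian, one has $H^i(\mathbb{Z}^r; A) \cong \bigwedge^i \mathbb{Z}^r \otimes A$, and restriction along $(m\mathbb{Z})^r \hookrightarrow \mathbb{Z}^r$ acts as multiplication by $m^i$, which kills $A$ for $m = |A|$. For the inductive step, set $Z = Z(L) \cong \mathbb{Z}^r$ and $Q = L/Z$ (nilpotent of strictly smaller class). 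Using residual finiteness of the finitely generated nilpotent group $L/|A|Z$, I would choose a finite-index subgroup $L_1 \leq L$ with $L_1 \cap Z = |A|\cdot Z$. In the Hochschild--Serre spectral sequence
\[
 E_2^{p,q}(L_1) \;=\; H^p(\bar L_1; H^q(|A|\cdot Z; A)) \;\Longrightarrow\; H^{p+q}(L_1; A),
\]
where $\bar L_1 = L_1/(L_1\cap Z)$ is a finite-index subgroup of $Q$, the restriction from the spectral sequence for $L$ is multiplication by $|A|^q$ on fibre coefficients, hence zero for $q \geq 1$. Consequently $\alpha|_{L_1}$ lies in the bottom filtration $F^n H^n(L_1; A) = E_\infty^{n,0}(L_1)$ and is the image (via the edge map/inflation) of some class $\beta$ in $H^n(\bar L_1; A)$. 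By the inductive hypothesis applied to $\bar L_1$, a further finite-index $\bar L_2 \leq \bar L_1$ kills $\beta$, and then the preimage $L_2 \leq L_1$ satisfies $\alpha|_{L_2}=0$.

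Applying the Key Lemma to the finitely many Postnikov obstructions from Step 2 and intersecting the resulting finite-index subgroups yields a single finite-index $L' \leq L$ with $p^*\xi = 0$, so the pulled-back homeomorphism $\wh h\colon \wh M \to G/L'$ is isotopic to a diffeomorphism. The main technical obstacle will be the inductive step of the Key Lemma: both arranging $L_1 \leq L$ with the prescribed intersection with the centre (via residual finiteness) and carefully tracking how restriction acts across the Hochschild--Serre spectral sequence so that $\alpha|_{L_1}$ actually drops into the bottom row. The rest is standard smoothing and obstruction theory.
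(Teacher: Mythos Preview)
Your high-level plan coincides with the paper's proof: reduce to killing a class in $[G/L,\,TOP/O]$ via the finiteness of $\pi_i(TOP/O)$ and obstruction theory, and then prove that every positive-degree cohomology class of a nilmanifold with finite coefficients dies on some finite cover. Your Key Lemma is exactly the paper's condition $(**)$, and the obstruction-theoretic reduction is the paper's Lemma~\ref{general}. The one substantive difference is in how you propose to prove the Key Lemma. The paper takes a single central $\mathbb Z\subset L$, uses the Gysin sequence of the resulting $S^1$-bundle, and inducts on the dimension of the nilmanifold, invoking the inductive hypothesis twice: once to kill $\pi_!x$, and once more to kill the resulting $x'$ on the base. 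You instead quotient by the full center $Z\cong\mathbb Z^r$, use Hochschild--Serre, and induct on nilpotency class.

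The point that needs care is precisely the one you flag. From the vanishing of the restriction map on $E_2^{p,q}$ for $q\ge 1$ you conclude that $\alpha|_{L_1}$ drops all the way into $F^nH^n(L_1;A)$. For an arbitrary map of filtered objects this inference is false: a filtered map can be zero on every graded piece except the bottom one and still fail to land in the bottom filtration step (take $V=W=\mathbb Z^2$ with $F^1=\mathbb Z\times 0$ and $f(a,b)=(b,0)$; then $\mathrm{gr}^0f=\mathrm{gr}^1f=0$ but $f(F^0)\not\subset F^2$). What you actually get for free is only one filtration jump, $\alpha|_{L_1}\in F^1$, because $\mathrm{gr}^0(\mathrm{res})=0$. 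To push further you would need either an iterated construction (changing $L_1$ and hence the spectral sequence at each step) or an additional argument specific to this map of extensions. The paper's choice of a rank-one central subgroup sidesteps the problem entirely: the spectral sequence then has only two rows, so the Gysin exact sequence replaces the filtration bookkeeping, and a single filtration jump (equivalently, exactness at $H^i(N)$) is all that is needed. If you keep your induction on nilpotency class but use a cyclic $\mathbb Z\subset Z(L)$ instead of the full center, your argument goes through and becomes essentially the paper's.
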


Theorem \ref{nil_smoothing} is a consequence of Lemma \ref{general} and Lemma \ref{nil_*} stated below.

 \begin{definition}  A space $N$ satisfies condition (*) if for any $i > 0$, for any finite abelian group $T$, for any finite cover $\hat p: \hat N \to N$, and for any $x \in H^i(\hat N; T)$, then there exists a finite cover $\tilde p : \tilde N \to \hat N$ so that $\tilde p^*x = 0$.
\end{definition}

\begin{lemma} \label{general}
Let $h : M \to N$ be a homeomorphism of smooth manifolds of dimension greater than four.  {\colb Suppose $N$} satisfies (*).  Then there is a finite cover $\widehat{N} \to N$ so that the induced pullback homeomorphism $\widehat{M} \to \widehat{N}$ is isotopic to a diffeomorphism.
\end{lemma}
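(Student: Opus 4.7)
The plan is to invoke Kirby--Siebenmann--Hirsch--Mazur smoothing theory in dimension $\geq 5$. Transporting the smooth atlas of $M$ across $h$ endows $N$ with a second smooth structure, and $h$ is isotopic to a diffeomorphism if and only if this pulled-back structure is concordant to the original one. Smoothing theory classifies concordance classes of smoothings of a topological manifold of dimension $\geq 5$ (relative to a base smoothing) by homotopy classes of maps into $TOP/O$, so $h$ determines a single obstruction class
\[
  \alpha(h)\colon N \longrightarrow TOP/O,
\]
and $h$ is isotopic to a diffeomorphism precisely when $\alpha(h)$ is null-homotopic. For any finite cover $\wh N \to N$, the smoothing obstruction of the induced homeomorphism $\wh M \to \wh N$ is the pullback of $\alpha(h)$, so it is enough to arrange that this pullback become null-homotopic on some finite cover.

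The crucial structural fact is that $\pi_i(TOP/O)$ is a finite abelian group for every $i \geq 1$: the Kirby--Siebenmann computation gives $\pi_3(TOP/PL) = \Z/2$ and $\pi_i(TOP/PL) = 0$ otherwise, while by Kervaire--Milnor the exotic-sphere groups $\pi_i(PL/O) \cong \Theta_i$ are all finite. I would therefore climb a Postnikov tower
\[
  TOP/O \to \cdots \to P_k \to P_{k-1} \to \cdots
\]
whose $k$th fiber is $K(\pi_k(TOP/O), k)$, killing the composites $N \to TOP/O \to P_k$ inductively in $k$. Assuming inductively (after possibly first passing to a finite cover, still denoted $N$) that the composite has been trivialized through stage $k-1$, a choice of null-homotopy lifts $\alpha(h)$ at this stage to a class in $H^k(N;\pi_k(TOP/O))$, a cohomology group with finite abelian coefficients. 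Condition (*) then supplies a finite cover $\wh N \to N$ on which this class pulls back to zero, advancing the induction one step. Because $N$ is finite-dimensional, only finitely many Postnikov stages contribute, so finitely many applications of (*) produce a finite cover on which $\alpha(h)$ is null-homotopic, giving the desired isotopy to a diffeomorphism.

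The main subtlety I anticipate is the standard indeterminacy in Postnikov obstruction theory: at each stage, trivializing the previous obstruction requires a choice of null-homotopy, and the next obstruction depends on that choice. This is harmless here because condition (*) is formulated uniformly---it kills \emph{any} finite-abelian-coefficient cohomology class on \emph{some} further finite cover---so one simply keeps passing to further finite covers until every obstruction is annihilated. A minor bookkeeping issue is verifying that the smoothing obstruction really is natural under covers and that concordance of smoothings does imply isotopy of $h$ in dimension $\geq 5$, both of which are standard in the Kirby--Siebenmann framework.
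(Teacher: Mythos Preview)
Your proposal is correct and follows essentially the same route as the paper: both reduce to showing the smoothing obstruction $N \to Top/O$ becomes null-homotopic on a finite cover, use that $\pi_i(Top/O)$ is finite abelian, and then kill the successive obstruction classes in $H^i(N;\pi_i(Top/O))$ one at a time via condition~(*), terminating by finite-dimensionality. The only cosmetic difference is that the paper phrases the obstruction theory in terms of extending a null-homotopy over successive skeleta of a CW structure on $N$, whereas you phrase it dually in terms of lifting through successive stages of the Postnikov tower of $Top/O$; these are standard equivalent formulations yielding the same obstruction groups, and your remarks on indeterminacy and naturality under covers match the paper's treatment.
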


In particular any two smooth structures on $N$ become diffeomorphic after passing to a finite cover.  An existence result can be proved using similar techniques: any topological manifold of dimension greater than four which satisfies (*) has a finite cover which admits a smooth structure.

\begin{lemma} \label{nil_*}
Any nilmanifold satisfies condition (*).
\end{lemma}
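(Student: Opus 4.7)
The plan is to prove the statement by induction on the nilpotency class of the fundamental group. A nilmanifold $N = G/L$ is aspherical, i.e.\ a $K(L,1)$, with $L$ finitely generated torsion-free nilpotent, and its finite covers correspond to finite-index subgroups $\hat L \subset L$, which are again finitely generated torsion-free nilpotent. Thus condition (*) reduces to the purely group-theoretic statement: for every finitely generated torsion-free nilpotent group $\Gamma$, every $i \geq 1$, every finite abelian group $T$, and every $x \in H^i(\Gamma;T)$, there is a finite-index subgroup $\Gamma' \subseteq \Gamma$ with $\mathrm{res}^\Gamma_{\Gamma'}(x) = 0$.

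The base case is $\Gamma$ abelian, so $\Gamma \cong \mathbb Z^n$. Since $H^i(\mathbb Z^n; T) \cong \Lambda^i \mathbb Z^n \otimes T$ and restriction along $m \mathbb Z^n \hookrightarrow \mathbb Z^n$ is multiplication by $m^i$ on $H^i$, taking $m$ equal to the exponent of $T$ kills all of $H^i(\mathbb Z^n;T)$. For the inductive step we consider the central extension $1 \to C \to \Gamma \to Q \to 1$, where $C = \gamma_c(\Gamma)$ is the last non-trivial term of the lower central series (central, finitely generated, torsion-free abelian) and $Q = \Gamma/C$ is torsion-free nilpotent of class $c-1$ (using that $\gamma_c$ is isolated in a torsion-free nilpotent group). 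The main tool will be the Lyndon--Hochschild--Serre spectral sequence $E_2^{p,q} = H^p(Q; H^q(C;T)) \Rightarrow H^{p+q}(\Gamma;T)$ together with its filtration $F^0 \supseteq F^1 \supseteq \cdots \supseteq F^{i+1}=0$ on $H^i(\Gamma;T)$.

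We first pass once to a finite-index subgroup of $\Gamma$ in order to make the action of $Q$ on each coefficient module $H^q(C;T)$, $q \leq i$, trivial; this is possible because $\mathrm{Aut}(H^q(C;T))$ is finite. Given a class $x$, the strategy is to iteratively raise its LHS filtration level by passing to smaller finite-index subgroups. For $x \in F^p \setminus F^{p+1}$ we distinguish two cases. If $p \geq 1$, the associated graded $\bar x$ lives in $E_\infty^{p,i-p} \subset H^p(Q; H^{i-p}(C;T))$, a cohomology group of the smaller-class nilpotent group $Q$ with finite coefficients; the inductive hypothesis, applied to each element of the finite group $H^p(Q; H^{i-p}(C;T))$ in turn and then intersected, produces a finite-index $Q' \subset Q$ on which $\bar x$ restricts to zero, and $\Gamma' := \pi^{-1}(Q')$ raises $x$ into $F^{p+1}$. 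If $p = 0$, the image lies in $E_\infty^{0,i} \subset H^i(C;T)$ and is killed by restricting to $C' = eC$ with $e = \exp T$; using residual finiteness of the finitely generated nilpotent group $\Gamma/C'$ we choose a finite-index normal subgroup $N \trianglelefteq \Gamma/C'$ disjoint from the finite torsion subgroup $C/C'$, and let $\Gamma'$ be the preimage of $N$ in $\Gamma$; this is a finite-index subgroup with $\Gamma' \cap C = C'$, which again raises $x$ by one filtration level. Iterating at most $i+1$ times yields a finite-index $\Gamma'$ on which $x$ vanishes.

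The main obstacle we anticipate lies in the coordination between these two types of shrinking and the bookkeeping around the LHS filtration: the $p=0$ case forces us to shrink the kernel $C$, but $\Gamma/C'$ is then no longer torsion-free, so finding $\Gamma'$ with the prescribed intersection $\Gamma' \cap C = C'$ is exactly where residual finiteness of $\Gamma$ must be invoked. One also has to verify naturality of the LHS spectral sequence along each of the nested restrictions so that each reduction really pushes $x$ to a strictly higher filtration level on the new group, and check that the inductive hypothesis remains applicable to the torsion-free nilpotent quotients that appear along the way.
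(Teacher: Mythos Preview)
Your approach is sound and the spectral-sequence bookkeeping is handled correctly, but there is one genuine error: the claim that $\gamma_c(\Gamma)$ is isolated in a torsion-free nilpotent group is false. For instance, in the integer Heisenberg lattice enlarged by adjoining a central square root $w$ of the commutator $z=[x,y]$, one has $\gamma_2 = \langle z\rangle = \langle w^2\rangle$, and the image of $w$ has order~$2$ in $\Gamma/\gamma_2$. The fix is easy: replace $\gamma_c(\Gamma)$ by its isolator (which is still central, since in the Mal'cev completion $g^n$ central forces $g$ central via unipotence of $\mathrm{Ad}(g)$), or simply take $C=Z(\Gamma)$ and induct on Hirsch length instead of nilpotency class. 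Note also that your preliminary pass to trivialize the $Q$-action on $H^q(C;T)$ is unnecessary: $C$ is central, so the conjugation action is already trivial.

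Your route differs substantially from the paper's. The paper stays on the topological side and inducts on the \emph{dimension} of the nilmanifold: it peels off a single central circle via a principal $S^1$-bundle $S^1 \to N \to N/S^1$ and uses only the associated Gysin long exact sequence rather than the full Lyndon--Hochschild--Serre spectral sequence. The case $i=1$ is handled separately by a direct covering-space argument, and for $i>1$ the Gysin sequence reduces the problem to two applications of the inductive hypothesis on the lower-dimensional base $N/S^1$ (first to kill $\pi_! x$, then to kill a preimage $x'$). In effect the paper's argument is the rank-one-kernel special case of yours, where the spectral sequence has only two nonzero rows and degenerates to a long exact sequence; it is therefore more elementary and avoids the filtration-raising iteration, while your argument is purely group-theoretic and handles the entire last central term at once at the cost of heavier machinery.
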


\begin{proof}
Since a finite cover of a nilmanifold is a nilmanifold, it will be notationally simpler to show that any nilmanifold satisfies condition (**) defined below.

A space $N$ satisfies condition (**) if for any $i > 0$, for any finite abelian group $T$, and for any $x \in H^i(N; T)$, then there exists a finite cover $\widehat{p} :  \wh N \to  N$ so that $\wh p^*x = 0$.

We first verify condition (**) when $i = 1$.  Indeed, the Universal Coefficient Theorem gives an isomorphism $H^1(N; T) \to \Hom(H_1(N); T)$ for all spaces $N$ and the Hurewicz Theorem gives an isomorphism $\pi_1(N,n_0)^\text{ab} \to H_1(N)$ for a path-connected  space $N$.  Thus there is a natural isomorphism of contravariant functors from path-connected based spaces to abelian groups
$$
\Phi(N,n_0) : H^1(N; T) \xrightarrow{\cong} \Hom(\pi_1(N,n_0),T).
$$

Given $x \in H^1(N;T)$, there is a connected cover $\wh p : \wh N \to N$ and a base point $\wh n_0 \in \wh N$ so that $$ \wh p_*(\pi_1(\wh N, \wh n_0)) = \ker (\Phi(N,n_0)(x) : \pi_1(N,n_0) \to T).$$
Since $T$ is a finite group, $\wh p$ is a finite cover .  The commutative square
$$
\begin{CD}
H^1(\wh N; T) @>\cong>> \Hom(\pi_1(\wh N,\wh n_0),T)\\
@AA\wh{p}^*A @AA   - ~\circ ~\wh p_*A \\
H^1(N; T) @>\cong>> \Hom(\pi_1(N,n_0),T)
\end{CD}
$$
shows that $\wh p^*x = 0$.

We now turn to the proof that any nilmanifold satisfies condition (**) when $i > 1$.  The proof will be by induction on the dimension of the nilmanifold $N = G/L$, using the Gysin sequence of a principal $S^1$-bundle
$$
S^1 \to N \xrightarrow{\pi} N/S^1
$$
where $N/S^1$ is a nilmanifold.  To obtain this principal bundle note that the center $Z(G)$ is nontrivial since $G$ is nilpotent.  Furthermore, it can by shown that $Z(L) = L \cap Z(G)$ is a discrete cocompact subgroup of the real vector space $Z(G)$ (see \cite[Proposition 2.17]{Raghunathan}).  Choose a primitive element $l \in L \cap Z(G)$.  Then $S^1 = \R \cdot l/ \Z \cdot l$ acts freely on $N$ and the quotient $N/S^1$ is the nilmanifold $(G/\R \cdot l)/(L/ \Z \cdot l)$.

Let $N$ be a nilmanifold.  Assume by induction that condition (**) holds for all nilmanifolds of strictly smaller dimension.   The Gysin sequence (see \cite{DavisKirk})
$$
\cdots \to H^{i-2}(N/S^1; T) \xrightarrow{\cup e}  H^i(N/S^1; T) \xrightarrow{\pi^*} H^i(N; T) \xrightarrow{\pi_!} H^{i-1}(N/S^1; T) \to \cdots
$$
is an exact sequence associated to a principal $S^1$-fibration.   By the inductive hypothesis, there exists a finite cover $\wt{p/S^1} : \wt{N/S^1} \to N/S^1$ so that $\wt{p/S^1}^*(\pi_! x) = 0$.  (Note, here is where we use that $i > 1$.)  Define $\wt N$ as the pullback
$$
\begin{CD}
\wt N @>\wt \pi>>  \wt{N/S^1} \\
@VV\wt p V @VV\wt{p/S^1}V \\
N @>\pi>>  N/S^1
\end{CD}
$$
We have a map of principal $S^1$ bundles, hence a map of Gysin sequences (see the bottom two rows of the diagram below).   By commutativity of the lower right square below and the exactness of the middle row, there is an $x' \in H^i(\wt{N/S^1}; T)$ so that $\wt \pi^*x' = \wt p^* x$.  By the inductive hypothesis again, there is a finite cover $\wh{p/S^1} : \wh{N/S^1} \to \wt{N/S^1}$ so that $\wh{p/S^1}^*( x') = 0$.  Defining $\wh N$ as a pullback, we have the diagram below.

$$
\begin{CD}
H^i(\wh{N/S^1}; T) @>\wh \pi^*>> H^i(\wh N; T) @>\wh \pi_!>>  H^{i-1}(\wh{N/S^1}; T) \\
@AA\wh{p/S^1}^*A @AA\wh{p}^*A @AA\wh{p/S^1}^*A \\
H^i(\wt{N/S^1}; T) @>\wt\pi^*>> H^i(\wt N; T) @>\wt\pi_!>> H^{i-1}(\wt{N/S^1}; T) \\
@AA\wt{p/S^1}^*A @AA\wt{p}^*A @AA\wt{p/S^1}^*A \\
H^i(N/S^1; T) @>\pi^*>> H^i(N; T) @>\pi_!>>H^{i-1}(N/S^1; T)
\end{CD}
$$

Hence our desired finite cover is $\wt p \circ \wh p : \wh N \to N$.  This completes the proof of the lemma.
\end{proof}

In preparation for the proof of Lemma \ref{general} we review a bit of smoothing theory.  The two definitive treatments are the books \cite{KirbySeibenmann} and \cite{HirschMazur}; see also the recent survey \cite{DavisPetrosyan}.  A {\em smooth structure on a topological manifold $\Sigma$} is a pair $(M, h)$ where $M$ is a smooth manifold and $h : M \to \Sigma$ is a homeomorphism.  Two smooth structures $(M_1, h_1)$ and $(M_2,  h_2 )$ are {\em isotopic} if there is a diffeomorphism $f : M_1 \to M_2$ so that $h_1$ is isotopic to $h_2 \circ f$.  Let ${\cal T}_O(\Sigma)$ be the set of isotopy classes of smooth structures on $\Sigma$.

The fundamental theorem of smoothing theory says that a topological manifold of dimension greater than four admits a smooth structure if and only if its topological tangent bundle admits the structure of a vector bundle.  Furthermore, isotopy classes of smooth structures are in bijective correspondence with bundle reductions.   It will be easier (and slicker) to express this in terms of maps to classifying spaces, as in Part 2 of \cite{HirschMazur}.

Let $Top(n)$ be the group of homeomorphisms of $\R^n$ fixing the origin.  Give $Top(n)$ the compact open topology.    Let $O(n)$ be the orthogonal group.  Let $Top = \colim Top(n)$ and $O = \colim O(n)$.  The quotient space $Top/O$ admits the structure of an abelian $H$-space satisfying the following property:  if $\Sigma$ is a topological manifold of dimension greater than four, then the abelian group of homotopy classes $[\Sigma, Top/O]$ acts freely and transitively on the set of isotopy classes of smooth structures ${\cal T}_O(\Sigma)$.  For smooth structures $(M_1,h_1)$ and $(M_2,h_2)$, let $d(h_1,h_2)$ be the unique element of $[\Sigma,Top]$ so that $d(h_1,h_2) [M_1,h_1]= [M_2,h_2] \in {\cal T}_O(\Sigma)$.  Thus $d(h_1,h_2) = 0$ if and only if the homeomorphism $h_2^{-1} \circ h_1 : M_1 \to M_2$ is isotopic to a diffeomorphism.

The homotopy groups of $Top/O$ are reasonably well-understood.  Indeed, $\pi_i(Top/O) = 0,0,0,\Z/2,0,0,0,{\colb \Z/28}$ for $i = 0,1,2,3,4,5,6,7$ and for $i \geq 5$, $\pi_i(Top/O) \cong \Theta_i$, the group of exotic smooth structures on the $i$-sphere.  In particular, $Top/O$ is simply connected and the homotopy groups $\pi_i(Top/O)$ are all finite.

\begin{proof}[Proof of Lemma \ref{general}]
Let $\Sigma$ be a topological manifold of dimension greater than 4 which admits a smooth structure.  Here are three observations.  First, if $\wh p : \wh \Sigma \to \Sigma$ is a covering map, then the map $\wh p^* : {\cal T}_O(\Sigma) \to {\cal T}_O(\wh \Sigma)$ is equivariant with respect to the group homomorphism $\wh p^* : [\Sigma, Top/O] \to [\wh \Sigma, Top/O]$.  In other words, $\wh p^*([\alpha]\cdot [M,h]) = \wh p^*[\alpha]\cdot \wh p^*[M,h]$.  The geometric fact underlying this is that the pullback of the tangent bundle of the base space under a covering map is the tangent bundle of the total space.  Second, note that $\Sigma$ admits the structure of a CW-complex, for example, by triangulating the smooth structure.  Finally, note that if $f,g : X \to Y$ are maps from a  CW-complex to a simply-connected space, and $H(i-1) : X^{i-1} \times I \to Y$ is a homotopy from $f|_{X^{i-1}}$ to $g|_{X^{i-1}}$, there is a well-defined obstruction class ${\cal O} = {\cal O}^{i}(f,g,H(i-1)) \in H^{i}(X; \pi_{i}Y)$ (see \cite[Theorem 7.12]{DavisKirk}.  This class vanishes if and only if there is a homotopy $H(i) : X^i \times I \to Y$  from $f|_{X^{i}}$ to $g|_{X^{i}}$ which restricts to $H(i-1) |_{X^{i-2} \times I}$.

Let $(M_1,h_1)$ and $(M_2,h_2)$ be two smooth structures on a topological manifold $\Sigma$ which satisfies condition (*).  Assume $n = \dim \Sigma \geq 5$.  Give $\Sigma$ the structure of an $n$-dimensional CW complex.  Assume, by induction, there exists a finite cover $\wh p_{i-1} : \wh \Sigma_{i-1} \to \Sigma$ so that $d(\wh p_{i-1}h_1,\wh p_{i-1}^*h_2)$ is represented by a map $ \wh \Sigma_{i-1}  \to Top/O$ which is null-homotopic restricted to the $(i-1)$-skeleton.  Let ${\cal O} \in H^i(\wh \Sigma_{i-1} ; \pi_i(Top/O))$ be the obstruction to extending to null-homotopy.  By condition (*), there is a finite cover $\wh p(i):  \wh \Sigma_{i} \to  \wh \Sigma_{i-1}$ so that $\wh p(i)^* {\cal O} = 0$.  Then the finite cover $\wh p_i :=
\wh p_{i-1} \circ \wh p(i): \wh \Sigma_i \to \Sigma $ satisfies the inductive hypothesis.  Thus $\wh p_n$ is a finite cover so that the smooth structures $\wh p_n^* h_1$ and $\wh p_n^* h_2$ are isotopic.
\end{proof}

\begin{remark}  \label{non-exotic}
Suppose $\Sigma$ is a manifold of dimension 3 or less.  Using the work of many mathematicians, most notably Rado and Moise, one can show (see \cite{DavisPetrosyan}) that $\Sigma$ admits a smooth structure and that any two smooth structures are isotopic.
\end{remark}


\bibliographystyle{alpha}

\end{document}